\renewcommand*{\thefootnote}{\fnsymbol{footnote}}
\newcommand{\be}{\begin{equation}}
\newcommand{\ee}{\end{equation}}
\newcommand{\bqs}{\begin{equation*}}
\newcommand{\eqs}{\end{equation*}}
\newcommand{\phipm}{\phi_{\mathrm{pm}}}
\newcommand{\rmO}{\mathcal{O}}
\newcommand{\eps}{\varepsilon}
\newcommand{\e}{d_1}
\numberwithin{equation}{section}
\theoremstyle{plain}
\newtheorem{theorem}{Theorem}[section]
\newtheorem{proposition}[theorem]{Proposition}
\newtheorem{lemma}[theorem]{Lemma}
\newtheorem{corollary}[theorem]{Corollary}
\newtheorem{rmk}[theorem]{Remark}
\newcommand{\clin}{c_{\mathrm{lin}}}
\newcommand{\cps}{c_\mathrm{ps}}
\newcommand{\cpm}{c_\mathrm{pm}}
\newcommand{\nulin}{\nu_\mathrm{lin}}
\newcommand{\R}{\mathbb{R}}
\newcommand{\C}{\mathbb{C}}
\newcommand{\etalin}{\eta_\mathrm{lin}}
\newcommand{\mcl}{\mathcal{L}}
\newcommand{\upl}{u_\mathrm{pl}}
\newcommand{\mca}{\mathcal{A}}
\renewcommand{\Re}{\mathrm{Re} \,}
\newcommand{\ups}{u_\mathrm{ps}}
\newcommand{\etaps}{\eta_\mathrm{ps}}
\newcommand{\vps}{v_\mathrm{ps}}
\newcommand{\utr}{u_\mathrm{tr}}
\newcommand{\Atr}{\mathcal{A}_\mathrm{tr}}
\newcommand{\Ltr}{\mathcal{L}_\mathrm{tr}}
\renewcommand{\u}{\mathbf{u}}
\newcommand{\vpl}{v_\mathrm{pl}}
\newcommand\blfootnote[1]{%
	\begingroup
	\renewcommand\thefootnote{}\footnote{#1}%
	\addtocounter{footnote}{-1}%
	\endgroup
}
\begin{document}

\begin{center}
{\fontsize{15}{15}\fontseries{b}\selectfont{Pushed and pulled fronts in a logistic Keller-Segel model with chemorepulsion\blfootnote{The authors acknowledge support through grants NSF DMS-2205663 (A.S.), NSF-DMS-2202714 (M.A.), NSF-DMS-2007759 (M.H.)} }}\\[0.2in]
Montie Avery$^1$, Matt Holzer$^2$, and Arnd Scheel$^3$ \\[0.1in]
\textit{\footnotesize 
$^1$ Boston University, Department of Mathematics and Statistics, Boston, MA, USA \\
$^2$Department of Mathematical Sciences, George Mason University, Fairfax, VA, USA\\
$^3$University of Minnesota, School of Mathematics,   206 Church St. S.E., Minneapolis, MN 55455, USA
}
\end{center}

\begin{abstract}
    We analyze spatial spreading in a population model with logistic growth and chemorepulsion. In a parameter range of  short-range chemo-diffusion, we use geometric singular perturbation theory and functional-analytic farfield-core decompositions to identify spreading speeds with marginally stable front profiles. In particular, we identify a sharp boundary between between linearly determined, pulled propagation, and  nonlinearly determined, pushed propagation, induced by the chemorepulsion. The results are motivated by recent work on singular limits in this regime using PDE methods \cite{GrietteHendersonTuranova}.
\end{abstract}

{\noindent \bf Keywords:} 
pulled and pushed fronts, geometric singular perturbation, marginal stability\\


\section{Introduction}\label{sec:ExistenceGSPT}
We are interested in the Keller-Segel model for chemotactic motion with logistic population growth
\begin{align}
    u_t &= u_{xx} + \chi(u v_x)_x + u(1-u) \\
    0 &= \sigma v_{xx} + u - v.
\end{align}
Here, a population of agents with density $u$ is modeled by a logistic growth term and spatial diffusion, similar to the classical Fisher-KPP equation. In addition, agents in the population $u$ generate a diffusing and decaying chemical $v$. Changes in the density of $v$ are assumed to happen at a much faster scale than changes in the density $u$ so that a term $\tau v_t$ in the second equation is neglected. Crucially, the population $u$ senses gradients of $v$ which then induce a chemotactic motion with sensitivity $\chi$. We will assume throughout that the chemotactic motion is against the gradient, $\chi>0$, so that the release of the chemical induces the population to avoid clustering, while $\chi<0$ leads to the formation of clusters. The case $\chi>0$ in fact stabilizes a constant distribution $u\equiv1$ and the arguably most interesting effects of chemotactic motion arise during the growth of populations. The question we attempt to address here is whether spatial-temporal growth is enhanced by the chemotactic term, that is, if a population that is initially supported in a compact region of $x\in\R$ will spread with speed $c=2$, as is observed in the absence of chemotactic effects $\chi=0$, or if the repulsion introduced by $\chi$ can accelerate spatial spreading and lead to $c>2$. 

To be more precise, observe that absence of agents and chemical, $u = v = 0$ is a linearly unstable equilibrium state. Spatial growth and spreading of disturbances is well described by the propagation of an \emph{invasion front} which leaves behind a new, selected state in its wake, $u=v=1$. We will focus on the regime $\sigma/\chi \ll 1$. This limit has recently been studied in \cite{GrietteHendersonTuranova}, where estimates on the minimal speed for which positive traveling waves exist are derived. Inspired by this analysis, we present here a conceptual approach that relies on dynamical systems and functional analytic methods rather than PDE tools to precisely characterize   speeds in this limiting regime.  Before proceeding, we also note that results regarding invasion fronts and spreading speeds in the chemoattractive case ($\chi<0$) have been studied by several authors; see for example \cite{henderson22,nadin08,salako19}.

We now define $\tilde{x} = \frac{1}{\sqrt{\chi}} x$ and the rescaled unknowns $\tilde{u}(\tilde{x}, t) = u(x,t), \tilde{v}(\tilde{x}, t) = v(x,t)$. Dropping the tildes, we find the rescaled equation
\begin{eqnarray}
u_t &=& \e u_{xx}+ (uv_x)_x +u(1-u) \nonumber \\
0 &=& \delta^2 v_{xx}+u-v,  \label{eq:main} 
\end{eqnarray}
where $d_1 = \frac{1}{\chi}>0$ and $\delta^2 = \frac{\sigma}{\chi}\ll 1$. 

\paragraph{Front selection criteria.} Fixing $d_1$ and $\delta$, the system \eqref{eq:main} admits  traveling front solutions $u(x,t) = U(x-ct), v(x,t) = V(x-ct)$ connecting the stable state $(U,V) = (1,1)$ at $-\infty$ to the unstable state $(U,V) = (0,0)$ at $+\infty$ for many different speeds $c$. This is typical for fronts connecting to unstable states, and the presence of a one-parameter family of invasion fronts (after modding out the spatial translation symmetry) may be predicted by comparing the dimension of the unstable manifold of $(1,1)$ with the stable manifold of $(0,0)$ in the resulting traveling wave equation. To identify which of these fronts is observed when initial conditions are strongly localized, that is, supported for instance on a half line, we therefore need an additional selection criterion. 


A heuristic often used in the mathematics literature and supported by many results in order-preserving systems is to predict propagation at the minimal speed for which \eqref{eq:main} admits strictly positive traveling front solutions. In the absence of a comparison principle, this ``marginal positivity'' is replaced by the 
%
\emph{marginal stability criterion} \cite{vanSaarloosReview,CAMS}:
\[
 \text{\emph{Marginally stable fronts are selected by localized initial conditions.}}
\]
Here, marginal stability may be encoded as marginal spectral stability --- that is, the linearization about the front has spectrum which touches the imaginary axis but is otherwise stable --- in an appropriately chosen weighted norm. The universal validity of this criterion, in particular beyond equations with comparison principles, has recently been established rigorously in \cite{CAMS, Selectionsys}. 
Marginally stable spectrum may be point or essential spectrum. The latter is determined by the linearization in the leading edge, leading to ``linearly determined'' \emph{pulled fronts}. The former depends on the precise shape of the front interface, thus on the precise shape of the nonlinearity, and the associated fronts are commonly referred to as \emph{pushed fronts}. 


\paragraph{Main results.} Traveling front solutions $(u(x,t), v(x,t)) = (U(x-ct), V(x-ct))$ to \eqref{eq:main} solve the traveling wave system
\begin{align}
    0 &= d_1 U_{xx} + c U_x + (U V_x)_x + U - U^2, \nonumber \\
    0 &= \delta^2 V_{xx} + U - V. \label{e: tw}
\end{align}
When $\delta = 0$, we may substitute $V = U$ into the first equation, and find the corresponding \emph{porous medium limit},
\begin{align}
    0 = d_1 U_{xx} + c U_x + (U U_x)_x + U - U^2, \label{e: pme}
\end{align}
where explicit fronts can be found through a transformation to a Nagumo equation; see \cite{GrietteHendersonTuranova} and Appendix~\ref{a: pme limit}. In fact, there exists a precise characterization of spreading speeds in this limit. 
\begin{lemma}\label{lem:PME}\cite{kawasaki17} Selected fronts in \eqref{e: pme} are positive and have speed $c = \cpm(d_1)$, where
\begin{align}
    \cpm(d_1) = \begin{cases}
    \frac{1}{\sqrt{2}} + \sqrt{2} d_1, & d_1 < \frac{1}{2}, \\
    2 \sqrt{d_1}, & d_1 \geq \frac{1}{2}. 
    \end{cases}
\end{align}
With $d_1$ fixed and $c = \cpm(d_1)$, \eqref{e: pme} has a unique heteroclinic orbit $\phipm$ connecting $U = 1$ to $U = 0$. When $d_1 < \frac{1}{2}$, the heteroclinic corresponds to a marginally stable pushed front solution, while for $d_1 > \frac{1}{2}$, it corresponds to a marginally stable pulled front. 
\end{lemma}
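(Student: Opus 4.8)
The plan is to exploit the transformation to the Nagumo equation to make the front profile and its linearization fully explicit, and then apply a standard marginal stability analysis that distinguishes pulled from pushed fronts via the geometry of the absolute spectrum versus point spectrum. First I would carry out the change of variables referenced in Appendix~\ref{a: pme limit}: writing $W = U^2$ (or a similar power, chosen so that the quasilinear term $(UU_x)_x$ becomes a constant-coefficient diffusion), the porous-medium-limit traveling wave equation \eqref{e: pme} is conjugated to a Nagumo-type bistable equation $0 = D W_{xx} + c W_x + f(W)$ with a cubic-type nonlinearity. For Nagumo equations, the front connecting the two relevant states is known in closed form, $W(x) = \tfrac{1}{2}(1 - \tanh(\lambda x))$ for appropriate $\lambda$, and the admissible speed for which a monotone front exists as a heteroclinic is $c = \clin$ when the linear spreading speed dominates and $c = \cpm > \clin$ when the nonlinearity pushes; matching constants reproduces exactly the piecewise formula for $\cpm(d_1)$ with threshold $d_1 = \tfrac12$. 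This establishes existence and uniqueness of $\phipm$ and the formula for the speed.

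Next I would address marginal stability. Linearizing \eqref{e: pme} about the front in a comoving frame and passing to an exponentially weighted space $e^{\eta x} L^2$, one computes (i) the essential spectrum from the asymptotic operators at $\pm\infty$ — this is governed entirely by the linearization at the unstable state $U = 0$, hence is "linearly determined" — and (ii) any point spectrum, which depends on the profile. The dichotomy is then the textbook one: for the linear spreading speed $c = \clin = 2\sqrt{d_1}$ (valid when $d_1 \ge \tfrac12$), the optimal weight pushes the essential spectrum to touch the imaginary axis with a marginal (generalized) eigenvalue embedded at its edge — this is the pulled case. For $d_1 < \tfrac12$, $\cpm > \clin$, so at the steeper weight making the front spatially localized the essential spectrum is strictly stable, and marginality comes instead from a genuine point eigenvalue at $\lambda = 0$ produced by translation invariance (the derivative of the front, suitably weighted, is the eigenfunction, and one checks it lies in the weighted space and that no eigenvalue crosses into the right half-plane) — this is the pushed case. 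Here the explicit tanh-profile makes the eigenfunction and the absence of unstable point spectrum checkable by hand (Sturm–Liouville / explicit resolvent computations).

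The main obstacle I anticipate is the bookkeeping in translating spectral information across the Nagumo conjugacy: the change of variables $W = U^2$ is singular at $U = 0$, which is precisely the leading edge that controls the pulled regime, so one must be careful that the weighted-space marginal stability statement for the Nagumo front transfers correctly to a weighted-space statement for the original $U$-equation (the weights and the functional-analytic setting change under the nonlinear substitution). A clean way around this is to not transfer spectra through the substitution at all, but rather to verify marginal stability directly for \eqref{e: pme}: use the substitution only to obtain the explicit profile $\phipm$ and the speed, then set up the linearization of \eqref{e: pme} itself, where the quasilinear term $(U\phipm')_x + (\phipm \, \cdot\,)_{xx}$ appears but with explicitly known coefficients, and compute its essential spectrum and check for a marginal eigenvalue. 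The remaining steps — identifying the optimal weight $\eta_* = \eta_*(c)$, locating the branch point of the dispersion relation, and confirming the sign conditions that characterize "pulled" ($c = \clin$, double root of the dispersion relation on the imaginary axis) versus "pushed" ($c > \clin$, simple eigenvalue at the origin, essential spectrum strictly to the left) — are routine given the explicit profile, and are exactly the computations carried out in \cite{kawasaki17}; I would cite that reference for the detailed verification and restate the conclusion.
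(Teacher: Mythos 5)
Your overall strategy --- conjugate \eqref{e: pme} to a Nagumo-type equation, read off the selected speed from the classical literature, and then run a weighted-space spectral analysis to distinguish pushed from pulled --- is exactly the route the paper takes. However, the specific conjugacy you propose does not work, and this is a genuine gap in the existence part. A power substitution in the dependent variable alone cannot normalize \eqref{e: pme}: because of the extra linear diffusion, one has $d_1 U_{xx} + (UU_x)_x = \partial_{xx}\bigl(d_1 U + \tfrac12 U^2\bigr)$, so setting $P = d_1 U + \tfrac12 U^2$ turns the second-order term into $P_{xx}$ but leaves $cU_x + U(1-U)$ expressed through $U = -d_1 + \sqrt{d_1^2+2P}$, which is neither constant-coefficient in the advection nor polynomial in the reaction. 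The transformation that actually produces Nagumo's equation acts on the traveling-wave ODE as a phase-space/time change: $\tilde W = (d_1+U)U'$ together with the reparametrization $\tilde x = \int_0^x (d_1+U(\tau))^{-1}\,d\tau$, yielding $\tilde U'' + c\,\tilde U' + \tilde U(1-\tilde U)(d_1+\tilde U) = 0$ with the \emph{same} wave speed $c$, whence the Hadeler--Rothe formula for $\cpm(d_1)$ \cite{hadeler75,kawasaki17}. Note that this transformation is uniformly non-singular, since $d_1 + U \geq d_1 > 0$; the difficulty you anticipate about transferring spectral information through a substitution that degenerates at $U=0$ is an artifact of the wrong substitution. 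With the correct one, $\tilde x \sim x/d_1$ as $x\to\infty$, so decay rates at the leading edge simply rescale by $d_1$.

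A second, smaller issue concerns your use of the explicit profile in the stability step. The closed-form heteroclinic $\tilde U(\tilde x) = e^{-\tilde x/\sqrt2}/(1+e^{-\tilde x/\sqrt 2})$ exists for \emph{every} $d_1>0$, but it travels at speed $\tfrac1{\sqrt2}+\sqrt2\,d_1$, which exceeds $\clin = 2\sqrt{d_1}$ strictly whenever $d_1 \neq \tfrac12$; it is therefore the selected front only in the pushed regime $d_1<\tfrac12$. For $d_1>\tfrac12$ the selected pulled front at $c=\clin$ is a different orbit with no closed form, so "checking the eigenfunction by hand from the tanh profile" is unavailable there. What is needed instead is the weak-decay asymptotics $(ax+b)e^{-\etalin x}$ with $a>0$ of \eqref{e:pmeanonzero}, which the paper establishes via projective coordinates and a comparison argument in Appendix~\ref{a: pme limit}; the absence of unstable point spectrum then follows from Sturm--Liouville arguments rather than explicit computation. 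With these two corrections your outline matches the paper's proof.
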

Our main results continues this pushed-pulled dichotomy to finite $\delta$.


\begin{theorem}\label{t: main}
Consider \eqref{eq:main} with  $d_1 > 0$, $d_1+1/d_1\leq C$ bounded and $0<\delta<\bar{\delta}(C)$ sufficiently small. There exists a smooth function $d_1^*(\delta)$, defined for $0 \leq \delta < \overline{\delta}$, such that for $\delta\in[0,\bar{\delta})$, we have:
\begin{itemize}
    \item pulled fronts when $d_1 > d_1^*(\delta)$, propagating with the \emph{linear spreading speed} $\clin := 2 \sqrt{d_1}$;
    \item pushed fronts when $d_1 < d_1^*(\delta)$, propagating with the \emph{pushed speed} $c_\mathrm{ps} = \cpm(d_1)+ c_\mathrm{ps,2}\delta^2+\rmO(\delta^4)$, where
    \begin{align} \label{e: cps2}
        c_\mathrm{ps,2}= 
        -\frac{(6 d_1+3) \left(-18 (d_1+1)^{2 d_1+3} d_1^{-2d_1}+(2 d_1 (71 d_1+134)+149) d_1+23\right)}{12 \sqrt{2} (d_1+1) (2 d_1+1)^2}
    \end{align}
    \item a generic pushed-to-pulled transition in the sense of \cite{avery22} at $d_1 = d_1^*(\delta)=\frac{1}{2}+d_{1,2}\delta^2+\mathrm{O}(\delta^4)$, and
    \begin{align} \label{e:d12}
       d_{1,2} = \frac{1}{16} \left(268-243 \log(3)\right) \approx 0.0648259.
    \end{align}
    \item  selected fronts, pushed or pulled, are continuous in $C^\infty_\mathrm{loc}$ as functions of $d_1$ and $\delta\geq 0$.
\end{itemize}
\end{theorem}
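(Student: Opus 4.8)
The plan is to treat \eqref{e: tw} as a perturbation of the porous-medium limit \eqref{e: pme} and to carry out the construction using the geometric singular perturbation theory and farfield-core functional analytic framework advertised in the abstract. First I would set up the traveling-wave system as a first-order ODE in the variables $(U, U_x, V, V_x)$, with $\delta$ the singular parameter: when $\delta = 0$ the slaving $V = U$ reduces the flow to the two-dimensional Nagumo-type system \eqref{e: pme}, and Fenichel theory gives a slow manifold $\{V = U + \O(\delta^2)\}$ that is normally hyperbolic away from the degeneracies of the fast $V$-dynamics. On this slow manifold the reduced flow is a regular $\delta^2$-perturbation of \eqref{e: pme}, so the heteroclinic $\phipm$ from Lemma~\ref{lem:PME} persists; the persistence, together with the parametrization of the perturbed slow manifold, is exactly what yields the expansions $\cps = \cpm(d_1) + c_{\mathrm{ps},2}\delta^2 + \O(\delta^4)$ and $d_1^*(\delta) = \tfrac12 + d_{1,2}\delta^2 + \O(\delta^4)$, and since Fenichel's manifolds and the reduced vector field depend smoothly (indeed analytically) on $(\delta, d_1)$, the implicit function theorem applied to the heteroclinic matching condition delivers the claimed $C^\infty$ — in fact $C^\infty_{\mathrm{loc}}$ — dependence of the front profiles. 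The $\delta = 0$ endpoint is included because Fenichel's theorem gives the slow manifold and its smooth dependence on the full half-open parameter interval $[0,\bar\delta)$.

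The more delicate half of the statement is continuity of the \emph{selected} front across the pushed-to-pulled transition $d_1 = d_1^*(\delta)$, where the nature of the selected front changes. Here I would invoke the marginal stability characterization and the farfield-core decomposition: the selected front solves a boundary-value problem on $\R$ coupling the core profile to exponentially weighted farfield coordinates, with the speed $c$ and a weight parameter (or, equivalently, the location of a resonance pole relative to the essential spectrum) as unknowns. On the pulled side, $c = \clin = 2\sqrt{d_1}$ is pinned by the linear dispersion relation of the leading edge; on the pushed side, $c = \cps$ is determined by the vanishing of the coefficient of the weakly-decaying mode in the far field of the core profile (a solvability/Melnikov-type condition). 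The key structural input from \cite{avery22} is that at $d_1 = d_1^*(\delta)$ these two conditions coincide \emph{generically}: the pushed-front profile's slow mode coefficient vanishes to exactly first order as $d_1 \to d_1^*(\delta)$, which is precisely the genericity that must be checked, and it is what allows one to glue the pushed branch continuously onto the pulled branch in $C^\infty_{\mathrm{loc}}$. Concretely I would verify this transversality at $\delta = 0$ using the explicit Nagumo fronts (where the slow-mode coefficient and its $d_1$-derivative can be computed in closed form, giving $d_{1,2}$ above), and then propagate it to small $\delta > 0$ by the smooth dependence already established.

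Assembling the four bullet points, the logical order is: (1) GSPT reduction to the slow manifold and persistence of $\phipm$, yielding the pushed branch and its $\delta^2$-expansion; (2) analysis of the leading-edge linearization to produce the pulled branch and $\clin$, including the double-root/pinching condition that defines the linear spreading speed; (3) computation of the slow-mode (resonance) coefficient along the pushed branch and identification of its zero set with the curve $d_1^*(\delta)$, extracting $d_{1,2}$ from the $\delta = 0$ computation; (4) appeal to the abstract pushed-to-pulled transition result of \cite{avery22} plus the smooth dependence from the implicit function theorem to conclude that the selected front — pushed for $d_1 < d_1^*(\delta)$, pulled for $d_1 > d_1^*(\delta)$, and the transition front at equality — varies continuously in $C^\infty_{\mathrm{loc}}$ jointly in $d_1$ and $\delta \ge 0$. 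The main obstacle I anticipate is step (3)–(4): controlling the resonance pole of the farfield-core operator uniformly as $d_1 \to d_1^*(\delta)$ so that the pushed and pulled constructions actually match to all orders in the local topology, rather than merely agreeing in speed. The singular perturbation part, while computationally heavy (the formula for $c_{\mathrm{ps},2}$ is not pretty), is conceptually routine once the normally hyperbolic structure is in place; the uniformity of the functional-analytic continuation near the transition, and verifying the genericity condition with the explicit Nagumo data, is where the real work lies.
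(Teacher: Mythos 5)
Your outline of the existence side — Fenichel reduction to a slow manifold, persistence of the porous-medium heteroclinic by the implicit function theorem, a far-field/core formulation with the coefficient $a$ of the weakly decaying mode as the transition indicator, and explicit Nagumo-front computations for $c_{\mathrm{ps},2}$ and $d_{1,2}$ — matches the paper's Sections 2--4 in all essentials (the paper works in the coordinates $H=(V-U)/\delta^2$, $Z=\delta H'$ rather than $(V,V_x)$, and Fenichel only yields $C^k$ for fixed finite $k$, not analyticity, but these are cosmetic). The genuine gap is that your proposal never establishes \emph{marginal spectral stability of the constructed fronts for the full Keller--Segel system}, which is what the labels ``pushed'' and ``pulled'' in the theorem mean and which occupies all of Section 5 and Appendix B of the paper. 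The Fenichel reduction you use applies only to the steady traveling-wave ODE; the linearization of \eqref{eq:main} about a front is \emph{not} the linearization of the reduced scalar equation \eqref{e: reduced front existence}, so stability of the reduced problem proves nothing about the actual PDE. The paper handles this by writing the eigenvalue problem as an $8$-dimensional first-order system coupled to the existence problem, performing a second (linear, skew-product) Fenichel reduction in Proposition \ref{p: eigenvalue reduction}, and then running Evans-function / far-field-core arguments on the reduced eigenvalue problem.

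A second, related omission: the eigenvalue reduction is only valid for $|\lambda|\le\Lambda$ with $\bar\delta$ depending on $\Lambda$, so one must separately exclude unstable point spectrum with $|\lambda|$ large, \emph{uniformly in} $\delta$. Because of the parabolic--elliptic and singularly perturbed structure, this is not the routine sectoriality estimate one would have for a regular parabolic system; the paper devotes Appendix \ref{s: large lambda stability} to a multi-chart rescaling argument in $(\delta,|\gamma|)$ with $\lambda=1/\gamma^2$ to rule out bounded solutions of the eigenvalue ODE in that regime. Without both of these ingredients your argument establishes existence of the front branches and the transition curve, but not that the fronts are marginally stable (hence selected) for the original system, which is the substantive content of the pushed/pulled dichotomy in Theorem \ref{t: main}.
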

The analysis in \cite{avery22} implies that as a function of $d_1$, the pushed speed exhibits a quadratic correction to the linear speed near $d_1^*(\delta)$ and the quadratic coefficient is continuous in $\delta$.

\begin{rmk}
    The fronts constructed in Theorem~\ref{t: main} are positive.  Positivity of the front in the leading edge, $x \gg 1$, and in the wake, $x \ll -1$, are established in the proof of Theorem~\ref{t: main}. Positivity in the intermediate regime can then be established using a maximum principle argument as follows.  Since the fronts are also positive at $\delta = 0$ by Lemma \ref{lem:PME}, if $U(x;\delta)$ were to not be strictly positive for some $\delta > 0$, there would have to be an $x_0> 0$ and $\delta_0 > 0$ such that $U(x_0; \delta_0) = U'(x_0; \delta_0) = 0$ and $U''(x_0; \delta_0) > 0$. The existence of such a point is not possible by inspection of the first equation in \eqref{e: tw}. A similar argument holds for the $V$ component.
\end{rmk}

\paragraph{Overview.} Our approach follows the conceptual approach taken in \cite{CAMS,avery22}: 
\begin{itemize}
    \item[(i)] establish existence of fronts;
    \item[(ii)] identify speeds $c$ with linear marginal stability;
    \item[(iii)] establish selection. 
\end{itemize}
For steps (i) and (ii), a variety of techniques, using ODE, PDE, or topological tools are available. Step (iii) relies on the validity of the marginal stability conjecture and has been established for large classes of parabolic equations \cite{CAMS, Selectionsys}. Our result here covers steps (i) and (ii). We believe that the methods in \cite{CAMS, Selectionsys} can be adapted to establish (iii) in the present situation. 

In somewhat more detail, continuing pushed front solutions is typically amenable to classical perturbation theory approaches. Continuing pulled fronts is more difficult, since the linear spreading speed is associated with a Jordan block for the linearization at the unstable state in the traveling wave equation, and one must carefully study the convergence in this generalized eigenspace. Persistence of pulled fronts may then be established using either geometric desingularization to split up the double eigenvalue, or far-field/core decompositions which explicitly capture asymptotics in the leading edge \cite{AveryGarenaux, CAMS, avery22, Selectionsys}. 

The difficulty in proving Theorem \ref{t: main} is that the perturbation from $\delta = 0$ in \eqref{e: tw} is singular. From the point of view of PDE techniques for constructing traveling waves, for instance by compactness arguments, this poses a substantial technical difficulty in obtaining uniform regularity estimates on $V$; see \cite{GrietteHendersonTuranova} for further discussion of associated technical difficulties with this approach. On the other hand, when viewing the traveling wave system \eqref{e: tw sys} as a dynamical system in the variable $x$, Fenichel's geometric singular perturbation theory \cite{fenichel79} provides a powerful tool for analyzing the singular perturbation. Indeed, using Fenichel's methods we are able to reduce the singularly perturbed traveling wave problem \eqref{e: tw} to the regularized, scalar semilinear problem
\begin{align}
    0 = d_1 u_{xx} + c u_x + f(u, u_x; d_1, \delta), \label{e: reduced front existence}
\end{align}
where $f$ is $C^k$ in all arguments for any fixed $k <\infty$. We can then study persistence of invasion fronts, step (i) above, using the tools developed in \cite{CAMS, avery22} for regular perturbations. Marginal stability, step (ii) above, again uses Fenichel's reduction to regularize the eigenvalue problem, which we can then study using methods of \cite{CAMS, avery22}, although the form of the reduced eigenvalue problem is not quite as simple as \eqref{e: reduced front existence}. 


\paragraph{Outline.}
We use Fenichel's reduction to regularize the singular perturbation in existence and eigenvalue problem in Section \ref{s: regularization}. In Section \ref{s: existence}, we study the resulting regularized traveling wave problem using functional-analytic methods, using methods developed in \cite{AveryGarenaux, CAMS, avery22} to find pulled and pushed front profiles as well as the transition curve.  Section \ref{s: stability} establishes marginal spectral stability of these fronts thus justifying the pushed and pulled terminology. In Section~\ref{sec:numerics}, we briefly compare the expansions obtained in Theorem~\ref{t: main} to those obtained using numerical continuation.  The appendix contains the construction and properties of traveling fronts at $\delta=0$.

\section{Regularization via geometric singular perturbation theory}\label{s: regularization}

\subsection{Reduction of existence problem}
We express \eqref{e: tw} as a dynamical system in the variable $x$ by choosing coordinates $U, W= U', H = \frac{V-U}{\delta^2}, Z = \delta H'$, obtaining
\begin{align}
    U' &= W \nonumber \\
    W' &= - \frac{1}{d_1} \Gamma(U, W, H, Z) \nonumber \\
    \delta H' &= Z \nonumber \\
    \delta Z' &= H + \frac{1}{d_1} \Gamma(U, W, H, Z), \label{e: tw sys}
\end{align}
where
\begin{align}
    \Gamma(U, W, H, Z) = cW + W^2 + \delta W Z + U H + U (1-U). \label{e: Gamma def}
\end{align}
When $\delta=0$ the system (\ref{e: tw sys}) reduces to two algebraic equations coupled to two differential equations. One identifies the following  reduced slow manifold comprised of solutions of the  algebraic of equations in the singular limit $\delta=0$,
\[ \mathcal{M}_0 = \left\{ \left(U,W,H,Z\right) \ |  \ Z=0, \ H=-\frac{cW+W^2+U-U^2}{d_1+U} \right\} .\]
The linearization of (\ref{e: tw sys}) at any such fixed point has two zero eigenvalues and two hyperbolic eigenvalues $\pm \sqrt{1+\frac{U}{\e}}$ for $U\geq 0$.  The eigenspaces of the non-zero eigenvalues are traverse to $\mathcal{M}_0$ and therefore the reduced manifold is normally hyperbolic. Fenichel's Persistence Theorem \cite{fenichel79} implies that $\mathcal{M}_0$ persists as an invariant manifold $\mathcal{M}_{\delta}$ with the following properties.

\begin{proposition}[Reduction for existence problem]\label{p: existence reduction} 
 Fix $0 < \underline{d} < \overline{d}$, $M > 1$, and an integer $k \geq 2$.  There exists a $\overline{\delta}>0$ such that all trajectories of \eqref{e: tw sys} with $|\delta| < \overline{\delta}$, $\underline{d} < d_1 < \overline{d}$ satisfying $-\frac{d_1}{2}<U<M$ and $|W| \leq M$ lie in a slow manifold  $\mathcal{M}_\delta$, which is normally hyperbolic and invariant under the flow of \eqref{e: tw sys}, and may be written as a graph $\mathcal{M}_\delta = \{ (U,W,H,Z) : H = \psi_H (U,W; \delta), Z = \psi_Z(U,W;\delta)\}$, where
\begin{align}
    H &= \psi_H (U, W; \delta) = \psi_H^0 (U, W) + \delta \psi_H^1 (U, W) + \delta^2 \psi_H^2(U,W) + \mathrm{O}(\delta^4),  \nonumber \\
    Z &= \psi_Z (U, W; \delta) = \delta \psi_Z^1 (U, W) + \mathrm{O}(\delta^2), 
\end{align}
and $\psi_{Z/H}$ are $C^k$ in all arguments. Hence, for all such trajectories, $U$ and $W$ solve the reduced system
\begin{align}
    U' &= W \nonumber \\
    W' &= -\frac{1}{d_1} \Gamma (U, W, \psi_H (U, W; \delta), \psi_Z (U, W; \delta)). \label{e: reduced tw sys}
\end{align}
Moreover, $\psi_Z (0, 0; \delta) = \psi_H (0, 0; \delta) = \psi_H (1, 0; \delta) = 0$ for all $|\delta| < \overline{\delta}$.
\end{proposition}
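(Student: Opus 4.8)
The statement is a direct application of Fenichel's geometric singular perturbation theory \cite{fenichel79}: the plan is to put \eqref{e: tw sys} in slow--fast form, verify normal hyperbolicity on a compact base covering the stated range of $(U,W)$, invoke Fenichel's persistence and fibration theorems, and finally read off the $\delta$-expansion of the graph by matching powers of $\delta$ in the invariance equation. I would take $(U,W)$ as slow and $(H,Z)$ as fast; rescaling to the fast variable $\xi = x/\delta$ turns \eqref{e: tw sys} into $\dot U = \delta W$, $\dot W = -\tfrac{\delta}{d_1}\Gamma$, $\dot H = Z$, $\dot Z = H + \tfrac{1}{d_1}\Gamma$, whose layer problem $(\delta = 0)$ has $(U,W)$ frozen and equilibrium set precisely $\mathcal{M}_0$. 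The Jacobian of the $(H,Z)$-layer along $\mathcal{M}_0$ has eigenvalues $\pm\sqrt{1 + U/d_1}$ (as already noted before the proposition); since $U > -\tfrac{d_1}{2}$ forces $1 + U/d_1 > \tfrac{1}{2}$ for every $d_1 > 0$, these eigenvalues are real with magnitude bounded below by $\sqrt{1/2}$, uniformly in $d_1 \in [\underline{d},\overline{d}]$ and over bounded wave speeds $c$. Hence $\mathcal{M}_0$ is normally hyperbolic with one-dimensional stable and unstable fast directions, uniformly in $d_1$. Restricting $\mathcal{M}_0$ to a compact neighborhood of the region $\{-\tfrac{d_1}{2} \le U \le M,\ |W| \le M\}$ (modifying the vector field outside if convenient so that Fenichel's theorem applies cleanly), one obtains, for $|\delta| < \overline{\delta}$ with $\overline{\delta}$ depending on $k$ and $C$, a $C^k$ locally invariant slow manifold $\mathcal{M}_\delta = \{H = \psi_H(U,W;\delta),\ Z = \psi_Z(U,W;\delta)\}$ which is $\mathrm{O}(\delta)$-close to $\mathcal{M}_0$ in $C^k$ and depends smoothly on $d_1$, together with stable and unstable Fenichel fibrations filling a full neighborhood $\mathcal{N}$ of the base in phase space; on $\mathcal{M}_\delta$ the flow is \eqref{e: reduced tw sys} by construction.

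Next I would show that every orbit of \eqref{e: tw sys} confined to $\{-\tfrac{d_1}{2} < U < M,\ |W| \le M\}$ for all $x \in \R$ lies on $\mathcal{M}_\delta$. The key point is that the fast block is enslaved to $(U,W)$: eliminating $Z$ from the last two lines of \eqref{e: tw sys} yields the linear second-order equation $\delta^2 H'' - \tfrac{\delta^2 W}{d_1} H' - \bigl(1 + \tfrac{U}{d_1}\bigr) H = \tfrac{1}{d_1}(cW + W^2 + U - U^2)$, which for small $\delta$ is uniformly hyperbolic (saddle-type) with the same spectral gap $\sqrt{1 + U/d_1}$; a bounded right-hand side together with the bounds on $(U,W)$ --- whose derivatives I would control by a short bootstrap through $W' = -\tfrac{1}{d_1}\Gamma$ --- forces the unique $\R$-bounded solution to be bounded and $\mathrm{O}(\delta)$-close to $\psi_H^0(U,W) := -\tfrac{cW + W^2 + U - U^2}{d_1 + U}$, so the orbit lies inside $\mathcal{N}$. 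Then the standard dichotomy applies: off $\mathcal{M}_\delta$ the stable or unstable Fenichel fiber coordinate grows without bound in forward or backward $x$, so an orbit staying in $\mathcal{N}$ for all $x$ must lie on $\mathcal{M}_\delta$. In particular the equilibria $(0,0,0,0)$ and $(1,0,0,0)$ of \eqref{e: tw sys} --- the constant states $u \equiv v \equiv 0$ and $u \equiv v \equiv 1$, which sit on $\mathcal{M}_0$ since $\psi_H^0(0,0) = \psi_H^0(1,0) = 0$ --- lie on $\mathcal{M}_\delta$, giving $\psi_H(0,0;\delta) = \psi_Z(0,0;\delta) = \psi_H(1,0;\delta) = 0$ for all $|\delta| < \overline{\delta}$.

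For the $\delta$-expansion I would substitute the graph ansatz into the invariance conditions obtained by differentiating $H = \psi_H$ and $Z = \psi_Z$ along \eqref{e: reduced tw sys}, namely $\psi_Z = \delta\bigl(W\partial_U\psi_H - \tfrac{1}{d_1}\Gamma\,\partial_W\psi_H\bigr)$ and $\psi_H + \tfrac{1}{d_1}\Gamma = \delta\bigl(W\partial_U\psi_Z - \tfrac{1}{d_1}\Gamma\,\partial_W\psi_Z\bigr)$ with $\Gamma = \Gamma(U,W,\psi_H,\psi_Z)$ from \eqref{e: Gamma def}, and match powers of $\delta$. Order $\delta^0$ of the second equation returns $\psi_H^0$ as above; order $\delta^1$ of the first gives $\psi_Z^1 = W\partial_U\psi_H^0 + \psi_H^0\,\partial_W\psi_H^0$; and at each higher order $\psi_H^j$ and $\psi_Z^j$ are determined recursively from lower-order data, solvable at every step because the coefficient of $\psi_H^j$ is $1 + U/d_1 \ne 0$ --- the same nondegeneracy that underlies normal hyperbolicity. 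Finally, since \eqref{e: tw sys} is invariant under $(\delta,Z) \mapsto (-\delta,-Z)$ with $U,W,H$ fixed, the $\delta$-expansions of $\psi_H$ and $\psi_Z$ are even, respectively odd, so $\psi_H^1 \equiv 0$ and the remainder in $\psi_H$ is $\mathrm{O}(\delta^4)$, matching the stated form. I expect the genuinely delicate step to be the containment claim of the second paragraph: Fenichel's theorem gives only local invariance, so one must check via the enslaving estimate that the bounded heteroclinics constructed later do not leak out of $\mathcal{N}$. The rest is routine, the one real chore being to keep all constants uniform over $d_1 + 1/d_1 \le C$ and over the relevant range of $c$.
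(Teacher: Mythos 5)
Your proposal is correct and follows essentially the same route as the paper, which likewise reduces the statement to Fenichel's persistence theorem after computing the transverse eigenvalues $\pm\sqrt{1+U/d_1}$ of the layer problem along $\mathcal{M}_0$ and reads off the coefficients of the graph from the invariance equation (as the paper does in Lemma~\ref{l: psi expansions}). Your two additions --- the reversibility-type symmetry $(\delta,Z)\mapsto(-\delta,-Z)$, which cleanly forces $\psi_H$ even and $\psi_Z$ odd in $\delta$ and hence justifies the $\mathrm{O}(\delta^4)$ remainder left implicit in the paper, and the explicit dichotomy argument for why orbits confined to the stated $(U,W)$-region must lie on $\mathcal{M}_\delta$ --- are both sound and fill in details the paper passes over.
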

Note that $\psi_{Z/H}$ also depend on $d_1$, but we suppress this dependence in our notation for now. Throughout the rest of the paper, $k$ refers to the fixed $k \geq 2$ in the statement of Proposition \ref{p: existence reduction}. 

Setting $u = U$, we find from \eqref{e: reduced tw sys} the reduced scalar equation
\begin{align}
     0 = d_1 u_{xx} + c u_x + f(u, u_x; \delta, d_1), \label{e: reduced tw}
\end{align}
where
\begin{align}
    f(u, u_x; \delta, d_1) = u_x^2 + \delta u_x \psi_Z (u, u_x; \delta) + u \psi_H (u, u_x; \delta) + u - u^2 \label{e: reduced f def}
\end{align}
is $C^k$ in all arguments. 

\begin{lemma}\label{l: psi expansions}
    Let $(U(x; \delta), W(x; \delta))$ be a solution to the reduced system \eqref{e: reduced tw sys}. We then have
    \begin{align}
        \psi_H^0 (U(\cdot; 0),W(\cdot; 0)) &= U''(\cdot; 0), \\
        \psi_H^1 (U(\cdot; 0),W(\cdot; 0)) &= 0, \label{e: psi h1 expansion}\\
        \psi_Z^1 (U(\cdot; 0),W(\cdot; 0)) &= U'''(\cdot; 0), \\
        \psi_H^2 (U(\cdot; 0),W(\cdot; 0)) &= \frac{U''''(\cdot; 0)-\frac{1}{d_1}U'(\cdot; 0)U'''(\cdot; 0)}{1+ \frac{U(\cdot; 0)}{d_1}}.  \label{e: psi h2 expansion}
    \end{align}
\end{lemma}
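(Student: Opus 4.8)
The plan is to exploit the invariance of the slow manifold $\mathcal{M}_\delta$ together with the asymptotic expansions of $\psi_H, \psi_Z$ furnished by Proposition \ref{p: existence reduction}. Along any trajectory $(U(x;\delta), W(x;\delta))$ lying in $\mathcal{M}_\delta$, we have by definition $H(x;\delta) = \psi_H(U(x;\delta), W(x;\delta); \delta)$ and $Z(x;\delta) = \psi_Z(U(x;\delta), W(x;\delta); \delta)$, where $H, Z$ solve the full system \eqref{e: tw sys}. Recalling the coordinate definitions $H = (V-U)/\delta^2$ and $Z = \delta H'$, and that at $\delta = 0$ the reduced equation \eqref{e: pme} gives $V = U$ with $W = U'$, the idea is to differentiate the defining relations in $x$, substitute the equations of motion \eqref{e: tw sys}, and then match powers of $\delta$ after inserting the expansions $\psi_H = \psi_H^0 + \delta \psi_H^1 + \delta^2 \psi_H^2 + \mathrm{O}(\delta^4)$ and $\psi_Z = \delta \psi_Z^1 + \mathrm{O}(\delta^2)$.

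Concretely, the first step is to read off $\psi_H^0$. From the algebraic equation defining $\mathcal{M}_0$, one has $\psi_H^0(U,W) = -\Gamma(U,W,\psi_H^0,0)/d_1$ evaluated at $\delta = 0$, i.e. $\psi_H^0 = -(cW + W^2 + U - U^2)/(d_1 + U)$; comparing with the traveling wave equation \eqref{e: pme} at $\delta = 0$, namely $0 = d_1 U'' + cU' + (UU')' + U - U^2 = (d_1+U)U'' + cU' + (U')^2 + U - U^2$, and using $W = U'$ shows exactly $\psi_H^0(U(\cdot;0), W(\cdot;0)) = U''(\cdot;0)$. Next, to get $\psi_Z^1$, I differentiate $H(x;\delta) = \psi_H(U,W;\delta)$ once in $x$ to obtain $H' = \partial_U \psi_H \cdot W + \partial_W \psi_H \cdot W'$; on the other hand the third equation of \eqref{e: tw sys} gives $\delta H' = Z = \psi_Z$. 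At order $\delta^1$ this yields $\psi_Z^1 = \partial_U \psi_H^0 \cdot W + \partial_W \psi_H^0 \cdot W'$, which is precisely $\frac{d}{dx}\psi_H^0(U(\cdot;0),W(\cdot;0)) = \frac{d}{dx} U''(\cdot;0) = U'''(\cdot;0)$. The vanishing $\psi_H^1 \equiv 0$ along the solution should then follow by collecting the $\delta^1$ terms in the invariance equation for $W'$ (equivalently $\delta Z' = H + \Gamma/d_1$ at the appropriate order), noting that the odd-order corrections decouple because the $\delta = 0$ problem is even in the fast variables in the relevant sense — this matches the stated expansions $\psi_Z = \mathrm{O}(\delta)$ with no $\delta^0$ term and $\psi_H$ having vanishing $\delta^1$ coefficient along trajectories. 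Finally, for $\psi_H^2$, I differentiate $\delta Z' = H + \Gamma(U,W,H,Z)/d_1$, use $Z = \psi_Z$, $H = \psi_H$ and the chain rule $Z' = \partial_U \psi_Z \cdot W + \partial_W\psi_Z\cdot W'$, and collect the $\delta^2$ terms; using $\psi_Z^1 = U'''$ and $\psi_H^0 = U''$ from the previous steps, together with $W' = U''$ and the relation $\Gamma/d_1 = -W'$ along solutions, the order-$\delta^2$ balance rearranges to $(1 + U/d_1)\psi_H^2 = (\psi_Z^1)' - \frac{1}{d_1} W \psi_Z^1 = U'''' - \frac{1}{d_1} U' U'''$, giving \eqref{e: psi h2 expansion}.

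The main obstacle I anticipate is bookkeeping the chain-rule terms carefully: because $\psi_H, \psi_Z$ are functions of $(U,W)$ which themselves depend on $\delta$, differentiating in $x$ and then expanding in $\delta$ requires keeping track of both the explicit $\delta$-dependence of the $\psi$'s and the implicit dependence through $U(x;\delta), W(x;\delta)$, and one must verify that the ``cross terms'' (e.g. $\partial_W \psi_H^0$ multiplied by the $\delta$-correction of $W$) do not contaminate the stated identities — they should cancel or be absorbed precisely because the identities are asserted only at $\delta = 0$ and only the leading-order trajectory enters. A clean way to organize this is to substitute the full ansatz $U = U^0 + \delta^2 U^2 + \cdots$, $W = W^0 + \cdots$ (again odd powers absent) into the invariance PDEs that $\psi_H, \psi_Z$ satisfy — obtained by inserting the graph into \eqref{e: tw sys} — and match orders; this converts everything into algebra plus $x$-derivatives of the known $\delta = 0$ profile. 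The identities then follow by direct comparison with successive $x$-derivatives of the $\delta = 0$ traveling wave equation \eqref{e: pme}.
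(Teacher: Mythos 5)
Your proposal is correct and follows essentially the same route as the paper: restrict the invariance relations $\delta H' = Z$ and $\delta Z' = H + \tfrac{1}{d_1}\Gamma$ to the slow-manifold graph, insert the expansions of $\psi_H$ and $\psi_Z$, and match powers of $\delta$, identifying each coefficient with $x$-derivatives of the $\delta=0$ profile; your order-$\delta^2$ balance $(1+U/d_1)\psi_H^2 = \partial_x\psi_Z^1 - \tfrac{1}{d_1}W\psi_Z^1$ is exactly the computation behind \eqref{e: psi h2 expansion}. The only cosmetic remark is that the ``evenness'' heuristic you invoke for $\psi_H^1=0$ is unnecessary: it follows directly from the fact that $\psi_Z = \mathrm{O}(\delta)$, which is already part of Proposition \ref{p: existence reduction}.
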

\begin{proof}
    Substituting $Z = \psi_Z(U,W; \delta)$ and $H = \psi_H(U,W; \delta)$ into the last equation of \eqref{e: tw sys}, we find
    \begin{align}
        \delta \partial_x \psi_Z (U,W; \delta) = \psi_H (U,W; \delta) + \frac{1}{d_1} \Gamma(U, W, \psi_H(U,W; \delta), \psi_Z (U,W; \delta)) 
    \end{align}
    Along trajectories of the reduced system \eqref{e: reduced tw sys}, the last term is equal to $-U''$, from which we conclude
    \begin{align}
        \psi_H(U,W; \delta) = U'' + \delta \partial_x \psi_Z (U, W; \delta). \label{e: psi H expression}
    \end{align}
    Setting $\delta = 0$, we find $\psi_H(U(\cdot; 0),W(\cdot; 0); 0) = U''(\cdot; 0)$. Expanding the third equation of \eqref{e: tw sys} to leading order, we find $\psi_Z^1(U(\cdot; 0),W(\cdot; 0); 0) = \partial_x \psi_H(U(\cdot; 0),W(\cdot; 0); 0) = U'''$. We then find \eqref{e: psi h1 expansion} and \eqref{e: psi h2 expansion} by expanding \eqref{e: psi H expression} up to order $\delta^2$. 
\end{proof}

\begin{rmk}
    Notice that the reduced equation \eqref{e: reduced tw} is semilinear, with $f$ only depending on $u$ and $u_x$. However, in Lemma \ref{l: psi expansions} we express $\psi_H^0(U, W; 0)$ as $U''$, which makes the resulting reduced equation appear quasilinear. However, since $u$ solves a second order ODE, we can express $U''$ as a function of $U$ and $W$, obtaining for $\delta = 0$
    \begin{align}
        U'' = - \frac{cW + U - U^2}{d_1 + U} = \psi_H^0(U, W; 0),
    \end{align}
    so we can still express \eqref{e: tw sys} at $\delta = 0$ as a semilinear system. Expressing the equation in quasilinear form is often more convenient for computations, but we keep in mind that we can always view it as a semilinear equation since we are restricting to $U > - d_1$. 
\end{rmk}

\subsection{Reduction of stability problem}
We emphasize that the reduction of Proposition \ref{p: existence reduction} only applies to the existence problem for traveling waves, not to the full time-dependent problem. In particular, spectral stability of fronts in the full problem \eqref{eq:main} is not determined by the linearization of the scalar equation \eqref{e: reduced tw}. 

Instead, to study spectral stability of fronts, we formulate the eigenvalue problem for \eqref{eq:main} as a first order system in $x$, and couple it to the existence problem \eqref{e: tw sys} to obtain an 8 dimensional, autonomous system depending on the eigenvalue parameter $\lambda$, and then apply a reduction similar to that of Proposition \ref{p: existence reduction} to this extended system. 

The eigenvalue problem for a traveling wave solution $(U, V)$ to \eqref{eq:main} has the form
\begin{align}
    \begin{pmatrix}
        \lambda \tilde{u} \\
        0 
    \end{pmatrix}
    = \begin{pmatrix}
        d_1 \partial_{xx} + c \partial_x & 0 \\
        1 & \delta^2 \partial_{xx} - 1
    \end{pmatrix} 
    \begin{pmatrix}
    \tilde{u} \\ \tilde{v}
    \end{pmatrix}
    + \tilde{\Theta} (U, U_x, V_x, V_{xx}) \begin{pmatrix}
    \tilde{u} \\ \tilde{v} 
        \end{pmatrix} 
        =: L(U,V) \begin{pmatrix}
        \tilde{u} \\ \tilde{v} 
        \end{pmatrix} \label{e: spectral problem}
    \end{align} 
    where
    \begin{align}
        \tilde{\Theta}(U, U_x, V_x, V_{xx}) = \begin{pmatrix}
            V_{xx} + V_x \partial_x + 1 - 2 U & U \partial_{xx} + U_x \partial_x \\
            0 & 0 
        \end{pmatrix}. 
    \end{align}
Let 
\begin{align}
    K = \begin{pmatrix}
        1 & 0 \\
        0 & 0
    \end{pmatrix}.
\end{align}
We say $\lambda$ is in the point spectrum of the generalized eigenvalue problem of  $L(U,V) - \lambda K$ if the operator is Fredholm with index 0, but not invertible. If $L(U,V) - \lambda K$ is either not Fredholm, or Fredholm with nonzero index, we say $\lambda$ is in the essential spectrum of $L(U,V)$. 
    
If $(U,V)$ is a traveling front with constant limits at $\pm \infty$, then the essential spectrum may be computed from the limiting linearized equations at $\pm \infty$; see Section \ref{s: essential spectrum}. To study point spectrum, we rewrite \eqref{e: spectral problem} as a first order system, with coordinates $(\tilde{u}, \tilde{w}, \tilde{h}, \tilde{z}) = (\tilde{u}, \tilde{u}_x,  \frac{\tilde{v}-\tilde{u}}{\delta^2}, \delta \tilde{h}_x)$, finding
\begin{align*}
    \tilde{u}' &= \tilde{w} \\
    \tilde{w}' &= -\frac{1}{d_1} D \Gamma(U, W, H, Z) (\tilde{u}, \tilde{w}, \tilde{h}, \tilde{z})^T + \frac{1}{d_1} \lambda \tilde{u} \\
    \delta \tilde{h}' &= \tilde{z}\\
    \delta \tilde{z}' &= \tilde{h} + \frac{1}{d_1} D \Gamma(U, W, H, Z) (\tilde{u}, \tilde{w}, \tilde{h}, \tilde{z})^T - \frac{1}{d_1} \lambda \tilde{u}. 
\end{align*}
To make the equation autonomous, we couple it to the existence problem \eqref{e: tw sys}, studying the 8-dimensional system,
\begin{align}
     U' &= W \nonumber ,  &\tilde{u}' &= \tilde{w} \\
    W' &= - \frac{1}{d_1} \Gamma(U, W, H, Z), &\tilde{w}' &= -\frac{1}{d_1} D \Gamma(U, W, H, Z) (\tilde{u}, \tilde{w}, \tilde{h}, \tilde{z})^T + \frac{1}{d_1} \lambda \tilde{u},  \nonumber \\
    \delta H' &= Z \nonumber, &\delta \tilde{h}' &= \tilde{z}\\
    \delta Z' &= H + \frac{1}{d_1} \Gamma(U, W, H, Z), &\delta \tilde{z}' &= \tilde{h} + \frac{1}{d_1} D \Gamma(U, W, H, Z) (\tilde{u}, \tilde{w}, \tilde{h}, \tilde{z})^T - \frac{1}{d_1} \lambda \tilde{u}. \label{e: eigenvalue sys}
\end{align}

\begin{proposition}[Reduction for eigenvalue problem]\label{p: eigenvalue reduction}
     Fix  $0 < \underline{d} < \overline{d}$, $\Lambda > 0$, and $M > 1$.  
     Then there is  $\overline{\delta} > 0$, so that the following holds. 
    All trajectories of \eqref{e: eigenvalue sys} with $|\delta| < \overline{\delta}$, $\underline{d} < d_1 < \overline{d}$, $|\lambda| < \Lambda$ satisfying
    \begin{align}
        -\frac{d_1}{2}<U<M, \quad  |W| + |H| + |Z| + \leq M,
    \end{align}
    lie in a slow manifold, which is normally hyperbolic and invariant under the flow of \eqref{e: eigenvalue sys}, and may be written as a graph
    \begin{align}
        Z &= \psi_Z (U,W; \delta), \nonumber \\
        H &= \psi_H (U, W; \delta), \nonumber \\
        \tilde{z} &= \tilde{\psi}_z (U, W, \lambda; \delta) (\tilde{u}, \tilde{w})^T \nonumber \\
        \tilde{h} &= \tilde{\psi}_h (U, W, \lambda; \delta) (\tilde{u}, \tilde{w})^T, \label{e: eigenvalue slow manifold graph}
    \end{align}
    where $\psi_{Z/H}$ are given in Proposition \ref{p: existence reduction}, and $\tilde{\psi}_{z/h} (U,W, \lambda; \delta) : \C^2 \to \C$ are linear operators which are $C^k$ in all arguments, with expansions
    \begin{align}
        \tilde{\psi}_h(U,W, \lambda; \delta) &= \tilde{\psi}_h^0(U, W, \lambda) + \mathrm{O}(\delta), \\
        \tilde{\psi}_z(U,W, \lambda; \delta) &= \mathrm{O}(\delta). 
    \end{align}

    As a consequence, in the bounded parameter regime considered here, we have a bounded solution to the eigenvalue problem \eqref{e: eigenvalue sys} if and only if we have a bounded solution to the reduced problem 
    \begin{align}
        \tilde{u}' &= \tilde{w}, \nonumber\\
        \tilde{w}' &= - \frac{1}{d_1} D \Gamma(U,W, \psi_H (U, W; \delta), \psi_Z(U, W; \delta)) \cdot (\tilde{u}, \tilde{w}, \tilde{h}, \tilde{z})^T + \frac{1}{d_1} \lambda \tilde{u}, \label{e: eigenvalue reduced sys}
    \end{align}
    where $\tilde{h}, \tilde{z}$ are given by \eqref{e: eigenvalue slow manifold graph}. 
\end{proposition}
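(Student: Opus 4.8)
The plan is to apply Fenichel's persistence theorem \cite{fenichel79} directly to the eight-dimensional system \eqref{e: eigenvalue sys}, viewed as a fast--slow system with slow variables $(U,W,\tilde u,\tilde w)$ and fast variables $(H,Z,\tilde h,\tilde z)$, and then to exploit the linearity in the tilde variables to read off the graph form claimed in \eqref{e: eigenvalue slow manifold graph}. Setting $\delta=0$ turns the $\delta H', \delta Z'$ equations into the algebraic system already solved in Proposition~\ref{p: existence reduction}, giving $Z=0$, $H=\psi_H^0(U,W)$, and turns the $\delta\tilde h', \delta\tilde z'$ equations into $\tilde z=0$ together with $\tilde h + \tfrac1{d_1}D\Gamma(U,W,\psi_H^0,0)(\tilde u,\tilde w,\tilde h,0)^T - \tfrac1{d_1}\lambda\tilde u = 0$. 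Since $\partial_H\Gamma=U$ and $\partial_Z\Gamma=0$ at $\delta=0$, the latter is a linear equation for $\tilde h$ with coefficient $1+U/d_1>\tfrac12$ on the region $U>-d_1/2$, so it determines $\tilde h=\tilde\psi_h^0(U,W,\lambda)(\tilde u,\tilde w)^T$ as a linear function of $(\tilde u,\tilde w)$. This identifies the critical manifold $\mathcal{M}_0^{\mathrm{ext}}$.

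I would then check normal hyperbolicity. On the fast timescale, the linearization of the fast vector field in $(H,Z,\tilde h,\tilde z)$, with $(U,W,\tilde u,\tilde w)$ frozen, is block lower triangular in the ordering $\bigl((H,Z),(\tilde h,\tilde z)\bigr)$: the $(\tilde h,\tilde z)$ equations couple to $(H,Z)$ through $D\Gamma$, while the $(H,Z)$ equations are independent of $(\tilde h,\tilde z)$, and both diagonal blocks equal the matrix $\left(\begin{smallmatrix}0 & 1\\ 1+\tfrac1{d_1}\partial_H\Gamma & \tfrac1{d_1}\partial_Z\Gamma\end{smallmatrix}\right)$ governing the existence problem. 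Its eigenvalues are $\pm\sqrt{1+U/d_1}+\mathrm{O}(\delta)$, which on the compact parameter set $\underline{d}\le d_1\le\overline{d}$, $-d_1/2<U<M$ are real and uniformly bounded away from the imaginary axis, independently of $\lambda,d_1,\delta$. Hence $\mathcal{M}_0^{\mathrm{ext}}$ is normally hyperbolic with two fast stable and two fast unstable directions and no center directions beyond the slow ones, with uniform constants. Because the tilde directions are unbounded, I would apply Fenichel's theorem first on a compact slice of the base, say $|\tilde u|+|\tilde w|\le 1$, and then use that \eqref{e: eigenvalue sys} is invariant under the scaling $(\tilde u,\tilde w,\tilde h,\tilde z)\mapsto s\,(\tilde u,\tilde w,\tilde h,\tilde z)$; by uniqueness of the Fenichel manifold, the resulting slow manifold is scaling-invariant and therefore extends to a graph that is \emph{linear} in $(\tilde u,\tilde w)$ over the base $(U,W,\lambda)$. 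This yields \eqref{e: eigenvalue slow manifold graph} with $\tilde\psi_{z/h}(U,W,\lambda;\delta)\colon\C^2\to\C$ linear and $C^k$ in all arguments. Projecting $\mathcal{M}_\delta^{\mathrm{ext}}$ to $(U,W,H,Z)$ space gives an invariant graph for the existence subsystem, which by uniqueness in Proposition~\ref{p: existence reduction} coincides with $\mathcal{M}_\delta$, so $\psi_{Z/H}$ are the same functions as there. The expansions follow from $C^1$ dependence on $\delta$ together with $\tilde\psi_z\equiv 0$ and $\tilde\psi_h=\tilde\psi_h^0$ on $\mathcal{M}_0^{\mathrm{ext}}$.

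The equivalence of bounded solutions is then the usual consequence of the Fenichel geometry: a solution of \eqref{e: eigenvalue sys} that stays in $\{-d_1/2<U<M,\ |W|+|H|+|Z|\le M\}$ for all $x\in\R$ cannot leave $\mathcal{M}_\delta^{\mathrm{ext}}$, since the stable and unstable fiber directions would force exponential growth in forward or backward $x$ (for the tilde components one normalizes using the scaling symmetry, or observes that they are slaved by the linear graph once $(U,W,\tilde u,\tilde w)$ is bounded); substituting the graph \eqref{e: eigenvalue slow manifold graph} into the $\tilde u',\tilde w'$ equations of \eqref{e: eigenvalue sys} produces exactly \eqref{e: eigenvalue reduced sys}, and conversely any bounded solution of \eqref{e: eigenvalue reduced sys} lifts through the graph to a bounded solution of \eqref{e: eigenvalue sys}.

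The step I expect to be the main obstacle is not the hyperbolicity estimate --- which is transparent thanks to the block-triangular structure inherited from the existence problem --- but the careful handling of the non-compact tilde directions: one must argue that Fenichel's manifold inherits the scaling symmetry, so that it is a genuine linear sub-bundle over the $(U,W,\lambda)$ base and the reduced eigenvalue problem is again a linear second-order ODE of the form \eqref{e: eigenvalue reduced sys}, with the slaving maps $\tilde\psi_{z/h}$ depending $C^k$-smoothly and uniformly on $(U,W,\lambda,d_1,\delta)$.
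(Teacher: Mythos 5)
Your overall strategy coincides with the paper's: both apply Fenichel's theorem to the extended eight-dimensional system with slow variables $(U,W,\tilde u,\tilde w)$ and fast variables $(H,Z,\tilde h,\tilde z)$, identify the same critical manifold (with $\tilde h$ solved from the linear algebraic equation with coefficient $1+U/d_1>\tfrac12$), and verify normal hyperbolicity via the block-triangular fast linearization whose diagonal blocks both have eigenvalues $\pm\sqrt{1+U/d_1}+\mathrm{O}(\delta)$, uniformly hyperbolic on $U>-d_1/2$. Those computations are correct.

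The weak point is your justification that the perturbed slow manifold is \emph{linear} in $(\tilde u,\tilde w)$. First, the Fenichel manifold attached to a compact piece of a normally hyperbolic manifold with boundary (equivalently, obtained after cutting off the vector field outside a slice such as $|\tilde u|+|\tilde w|\le 1$) is \emph{not} unique --- it depends on the modification, with different choices agreeing only up to exponentially small corrections --- so ``by uniqueness'' does not deliver scaling invariance; moreover the scaling $(\tilde u,\tilde w,\tilde h,\tilde z)\mapsto s(\tilde u,\tilde w,\tilde h,\tilde z)$ maps your slice to a different region, so the scaled manifold is not a slow manifold over the same base and cannot be compared directly. Second, even granting invariance under all scalings, that only yields degree-one homogeneity of the slaving map, which for a smooth map $\C^2\to\C$ does not imply linearity; one would additionally need invariance under fiberwise superposition of solutions of the linear subsystem. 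The paper closes both gaps at once by inspecting the graph-transform construction: since the tilde subsystem is linear and skew over the $(U,W,H,Z)$ dynamics, the class of graphs that are linear in the tilde components (and, in the $(H,Z)$ components, independent of them) is preserved by the graph transform and in the limit, so initializing within this class produces a limit of the same form; uniform hyperbolicity along the linear fibers simultaneously disposes of the non-compactness without any cutoff in $(\tilde u,\tilde w)$. Replace your uniqueness/scaling argument with this observation; the remaining steps --- identifying $\psi_{Z/H}$ with those of Proposition~\ref{p: existence reduction}, the $\delta$-expansions, and the equivalence of bounded solutions via the stable/unstable fibration --- are fine as stated.
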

\begin{proof}
    The result is a straightforward application of Fenichel's slow manifold reduction \cite{fenichel79} with two minor adaptations. First, Fenichel's result perturbs compact manifolds of equilbria, while we allow for unbounded, non-compact manifolds in the linear components. Second, we claim that the slow manifold preserves the structure of the original system, namely the skew-product structure where the nonlinear problem does not depend on components of the linear subsystem, and the fact that the linear subsystem is linear. Inspecting the construction of slow manifolds via graph transform, one quickly notices that noncompactness in the linear subsystem poses no difficulty since hyperbolicity is naturally uniform in the linear equation. Secondly, one finds that the slow manifold is linear in the linear subsystem simply by initializing the graph transform with linear subspaces in this component, a property that is preserved by the flow and in the limit. Similarly, the fact that the manifold associated with the nonlinear subsystem does not depend on the linear components is preserved by the flow and therefore for graphs transported by the flow, and also in  the limit of infinite time. 
\end{proof}
\begin{lemma}\label{l: tilde psi expansion}
    Along trajectories of the reduced problem \eqref{e: eigenvalue reduced sys} at $\delta = 0$, we have
 \begin{align}
   \tilde{\psi}_h^0(U,W, \lambda)\cdot (\tilde{u},\tilde{w})^T =\tilde{u}''. 
    \end{align}
\end{lemma}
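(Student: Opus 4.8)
The plan is to mirror the argument used for Lemma~\ref{l: psi expansions}, now applied to the linear (tilde) subsystem of the extended problem \eqref{e: eigenvalue sys}. Concretely, I would substitute the slow-manifold graph \eqref{e: eigenvalue slow manifold graph}, i.e. $\tilde{h} = \tilde{\psi}_h(U,W,\lambda;\delta)(\tilde{u},\tilde{w})^T$ and $\tilde{z} = \tilde{\psi}_z(U,W,\lambda;\delta)(\tilde{u},\tilde{w})^T$ together with $H = \psi_H(U,W;\delta)$, $Z = \psi_Z(U,W;\delta)$, into the last equation of the tilde-subsystem of \eqref{e: eigenvalue sys},
\begin{align*}
  \delta \tilde{z}' = \tilde{h} + \tfrac{1}{d_1} D\Gamma(U,W,H,Z)\,(\tilde{u},\tilde{w},\tilde{h},\tilde{z})^T - \tfrac{1}{d_1}\lambda \tilde{u},
\end{align*}
where now $\tilde{z}'$ denotes the derivative along trajectories of the reduced system \eqref{e: eigenvalue reduced sys}.

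The key observation is that along those trajectories the second equation of \eqref{e: eigenvalue reduced sys} reads $\tilde{w}' = -\tfrac{1}{d_1}D\Gamma(U,W,H,Z)(\tilde{u},\tilde{w},\tilde{h},\tilde{z})^T + \tfrac{1}{d_1}\lambda\tilde{u}$, and since $\tilde{u}' = \tilde{w}$ we have $\tilde{w}' = \tilde{u}''$. Hence $\tfrac{1}{d_1}D\Gamma(U,W,H,Z)(\tilde{u},\tilde{w},\tilde{h},\tilde{z})^T - \tfrac{1}{d_1}\lambda\tilde{u} = -\tilde{u}''$, so the displayed identity collapses to
\begin{align*}
  \tilde{h} = \tilde{u}'' + \delta\,\tilde{z}',
\end{align*}
the exact linear analogue of \eqref{e: psi H expression}. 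Setting $\delta = 0$ and using $\tilde{\psi}_z = \mathrm{O}(\delta)$ from Proposition~\ref{p: eigenvalue reduction} gives $\tilde{h}\big|_{\delta=0} = \tilde{u}''\big|_{\delta=0}$, i.e. $\tilde{\psi}_h^0(U,W,\lambda)\cdot(\tilde{u},\tilde{w})^T = \tilde{u}''$, as claimed.

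There is essentially no deep obstacle here; the only point that requires a word of care is justifying $\delta\,\tilde{z}' \to 0$ as $\delta\to 0$. Since $\tilde{z} = \tilde{\psi}_z(U,W,\lambda;\delta)(\tilde{u},\tilde{w})^T$ with $\tilde{\psi}_z$ of order $\delta$ and $C^k$ in $\delta$ up to $\delta=0$, differentiating along the reduced flow (whose right-hand side is smooth in $(U,W)$ in the bounded regime of Proposition~\ref{p: eigenvalue reduction} and genuinely linear in $(\tilde{u},\tilde{w})$) produces $\tilde{z}' = \mathrm{O}(\delta)$ times a bounded linear expression in $(\tilde{u},\tilde{w})$, so $\delta\,\tilde{z}' = \mathrm{O}(\delta^2)$ on the slow manifold and the limit is clean. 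The remaining bookkeeping—that $D\Gamma$ evaluated on the reduced manifold is exactly the quantity appearing in \eqref{e: eigenvalue reduced sys}—is immediate from the definitions.
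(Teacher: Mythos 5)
Your argument is exactly the analogue of the proof of Lemma~\ref{l: psi expansions} applied to the linear subsystem, which is precisely what the paper intends (its proof is stated as "similar to Lemma~\ref{l: psi expansions} and omitted"). The sign bookkeeping giving $\tilde{h} = \tilde{u}'' + \delta\,\tilde{z}'$ and the justification that $\delta\,\tilde{z}'$ vanishes at $\delta = 0$ are both correct, so the proposal is correct and follows the paper's approach.
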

\begin{proof}
    The proof is similar to the proof of Lemma \ref{l: psi expansions} and omitted here.
\end{proof}

By Lemma \ref{l: tilde psi expansion}, at $\delta = 0$ the reduced eigenvalue problem \eqref{e: eigenvalue reduced sys} may be written in the form
\begin{align}
    d_1 \tilde{u}_{xx} + U \tilde{u}_{xx} + (c + 2 U_x) \tilde{u}_x + (U_{xx} + 1 - 2U) \tilde{u} = \lambda \tilde{u}, \label{e: eigenvalue reduced scalar delta 0}
\end{align}
which is precisely the eigenvalue problem for the artificial time dependent reduction
\begin{align}
    u_t = d_1 u_{xx} + c u_x + f(u, u_x; 0, d_1). 
\end{align}

\section{Preliminaries: spreading speed and the $\delta = 0$ limit} \label{s: preliminaries}
Before studying persistence of selected fronts to $\delta > 0$, we compute the linear spreading speed associated to \eqref{eq:main} and some useful properties of the fronts in the $\delta = 0$ limit.

\subsection{Linear spreading speed}

First, we compute the linear spreading speed, at which the pulled fronts propagate. Linearizing \eqref{e: tw} about the unstable equilibrium $u = v = 0$ and making the Fourier Laplace ansatz $u,v \sim e^{\nu x + \lambda t}$, we find the \emph{dispersion relation}
\begin{align}
    d_c (\lambda, \nu; \delta, d_1) = \det \begin{pmatrix}
    d_1 \nu^2 + c \nu + 1 - \lambda & 0 \\
    1 & \delta^2 \nu^2 - 1. 
    \end{pmatrix}
    = (d_1 \nu^2 + c \nu + 1 - \lambda) (\delta^2 \nu^2 - 1). 
\end{align}
The linear spreading speed, which characterizes marginal pointwise stability in the linearization about the unstable equilibrium, is in turn characterized by \emph{pinched double roots} of the dispersion relation; see \cite{HolzerScheelPointwiseGrowth} for a thorough treatment of pointwise stability criteria, or \cite[Section 1.2]{CAMS} for a brief overview of linear spreading speeds and pinched double roots. 
\begin{lemma}[Linear spreading speed via simple pinched double root]\label{l: linear spreading speed}
    Fix $d_1 > 0$ and $\delta \geq 0$. For $c = \clin := 2 \sqrt{d_1}$, the dispersion relation $d_c(\lambda, \nu; \delta, d_1)$ has a simple pinched double root at $(\lambda_\mathrm{lin}, \nulin) := (0, -\frac{1}{\sqrt{d_1}})$. That is, for $\lambda, \nu$ small the dispersion relation admits the expansions
    \begin{align}
        d_{\clin} (\lambda, \nulin + \nu) =  d_{10} \lambda - d_{02} \nu^2 + \mathrm{O}(\nu^3, \lambda \nu) \label{e: dispersion expansion},
    \end{align}
    where $d_{10} = 1 - \delta^2 \nulin^2, d_{02} = d_1 (1-\delta^2\nulin^2)$. 
\end{lemma}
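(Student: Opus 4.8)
The plan is to exploit the fact that the dispersion relation is available in closed form and, at $c = \clin$, factors transparently. Since
\begin{align*}
 d_1 \nu^2 + \clin \nu + 1 = \big(\sqrt{d_1}\,\nu + 1\big)^2 = d_1 (\nu - \nulin)^2, \qquad \nulin = -\tfrac{1}{\sqrt{d_1}},
\end{align*}
we have the exact identity
\begin{align*}
 d_{\clin}(\lambda, \nu; \delta, d_1) = \big(d_1 (\nu - \nulin)^2 - \lambda\big)\,\big(\delta^2 \nu^2 - 1\big),
\end{align*}
and every assertion of the lemma can be read off from this. Throughout I use that $d_1 + 1/d_1 \le C$ confines $d_1$ to a fixed compact subset of $(0,\infty)$, so that for $\delta < \bar\delta(C)$ sufficiently small we have $\delta^2 < d_1$; this is the regime relevant to the rest of the paper.

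First I would verify that $(\lambda, \nu) = (0, \nulin)$ is a double root in $\nu$ of multiplicity exactly two with $\lambda$ appearing to first order, which is the meaning of \emph{simple} double root. At $\lambda = 0$ the first factor is $d_1(\nu - \nulin)^2$, which vanishes to second order at $\nu = \nulin$, while the second factor evaluates to $\delta^2 \nulin^2 - 1 = \delta^2/d_1 - 1 < 0$, nonzero by the smallness of $\delta$. Differentiating the factored form at $(0, \nulin)$, where both the first factor and its $\nu$-derivative vanish, gives
\begin{align*}
 \partial_\nu d_{\clin}(0, \nulin) = 0, \qquad \partial_\nu^2 d_{\clin}(0, \nulin) = 2 d_1 (\delta^2 \nulin^2 - 1) \ne 0, \qquad \partial_\lambda d_{\clin}(0, \nulin) = 1 - \delta^2 \nulin^2 \ne 0,
\end{align*}
as required. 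For the expansion, write $\nu = \nulin + \tilde\nu$ and expand the second factor as $\delta^2 \nu^2 - 1 = (\delta^2 \nulin^2 - 1) + 2 \delta^2 \nulin \tilde\nu + \delta^2 \tilde\nu^2$; multiplying by $d_1 \tilde\nu^2 - \lambda$ and retaining only the lowest-order contributions (the products $d_1\tilde\nu^2\cdot\delta^2\tilde\nu^2$, $d_1\tilde\nu^2\cdot 2\delta^2\nulin\tilde\nu$, $-\lambda\cdot 2\delta^2\nulin\tilde\nu$ and $-\lambda\delta^2\tilde\nu^2$ are all $\mathrm{O}(\tilde\nu^3,\lambda\tilde\nu)$) gives
\begin{align*}
 d_{\clin}(\lambda, \nulin + \tilde\nu; \delta, d_1) = (1 - \delta^2 \nulin^2)\,\lambda - d_1(1 - \delta^2 \nulin^2)\,\tilde\nu^2 + \mathrm{O}(\tilde\nu^3, \lambda \tilde\nu),
\end{align*}
which is \eqref{e: dispersion expansion} with $d_{10} = 1 - \delta^2 \nulin^2$ and $d_{02} = d_1(1 - \delta^2 \nulin^2)$; note $d_{10}, d_{02} > 0$ since $\delta^2 < d_1$.

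It remains to check that the double root is pinched. Because $d_c$ factors, its set of spatial roots is the union of the two roots of $d_1(\nu - \nulin)^2 = \lambda$, namely $\nu^\pm(\lambda) = \nulin \pm \sqrt{\lambda/d_1}$ with the principal branch of the square root, and the two $\lambda$-independent roots $\pm 1/\delta$ of $\delta^2 \nu^2 - 1$. For $\delta < \bar\delta$ small the latter lie far from the bounded point $\nulin$ and from the branches $\nu^\pm(\lambda)$ for $\lambda$ in a neighborhood of $0$, so the only roots colliding at $(\lambda, \nu) = (0, \nulin)$ are $\nu^\pm(\lambda)$, and these continue analytically for all $\lambda$ without ever meeting the roots $\pm 1/\delta$. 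As $\mathrm{Re}\,\lambda \to +\infty$ we have $\mathrm{Re}\,\nu^+(\lambda) \to +\infty$ and $\mathrm{Re}\,\nu^-(\lambda) \to -\infty$, so the two merging roots separate into opposite half-planes; this is exactly the Briggs pinching criterion (cf. \cite{HolzerScheelPointwiseGrowth}). Combined with the previous paragraph, this shows $(\lambda_\mathrm{lin}, \nulin) = (0, -1/\sqrt{d_1})$ is a simple pinched double root with the claimed expansion.

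I do not anticipate a genuine obstacle: the explicit factorization of the dispersion relation reduces the proof to elementary algebra. The only steps needing care are ensuring $\delta$ is small relative to the lower bound on $d_1$ — so that $d_{10}, d_{02}$ are nonzero (indeed positive) and the factor $\delta^2 \nu^2 - 1$ contributes no root near $\nulin$, both secured by $d_1 + 1/d_1 \le C$ and $\delta < \bar\delta(C)$ — and phrasing the pinching condition correctly, which is immediate once the spatial roots are written in closed form. If one also wanted to confirm that this is the pinched double root actually governing the linear spreading speed (the rightmost one), that would follow by continuity from the classical Fisher--KPP limit $\delta = 0$, but it is not part of the stated lemma.
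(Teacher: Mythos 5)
Your proof is correct, and for the expansion \eqref{e: dispersion expansion} it is essentially the paper's argument: factor the first quadratic at $c=\clin$ as $d_1(\nu-\nulin)^2$ and expand the explicit product. The one place you genuinely diverge is the pinching condition. The paper deliberately does \emph{not} prove pinching inside this lemma --- it remarks immediately afterwards that the expansion only establishes a simple double root, and that pinching follows from the expansion combined with Lemma~\ref{l: leading edge stability} (marginal stability of the weighted essential spectrum in the leading edge). You instead verify Briggs's criterion directly from the closed-form roots $\nu^\pm(\lambda)=\nulin\pm\sqrt{\lambda/d_1}$, checking that they separate into opposite half-planes as $\Re\lambda\to+\infty$ and never collide with the $\lambda$-independent roots $\pm 1/\delta$. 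Both routes are valid; yours is more self-contained and elementary here, while the paper's route reuses a spectral computation it needs anyway.

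One small point of care: the lemma is stated for all $\delta\geq 0$, but $d_{10}=d_{02}=1-\delta^2/d_1$ vanishes when $\delta^2=d_1$ (the second factor then also has a root at $\nulin$), so the ``simple'' double root structure genuinely requires $\delta^2\neq d_1$; your restriction to $\delta<\bar\delta(C)$ with $d_1+1/d_1\leq C$ is the right reading of the hypothesis and is, if anything, more careful than the statement itself.
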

\begin{proof}
    Focusing on the first factor in the dispersion relation $\tilde{d}_c(\lambda, \nu; d_1) = d_1 \nu^2 + c \nu + 1$, and solving $\tilde{d}_c (\lambda, \nu; d_1) = \partial_\nu \tilde{d}_c(\lambda, \nu; d_1) = 0$, we readily find a double root at $(0, \nulin)$. Expanding the full dispersion relation near this double root readily gives \eqref{e: dispersion expansion}. 
\end{proof}

Actually, the expansion \eqref{e: dispersion expansion} only implies that $(\lambda, \nu) = (0, \nulin)$ is a simple double root, that is, simple in $\lambda$ and double in $\nu$. The pinching condition is then implied by the expansion \eqref{e: dispersion expansion} together with Lemma \ref{l: leading edge stability}, below. 

In fact, the dispersion relation at the linear spreading speed has the explicit form
\begin{align}
    d_{\clin} (\lambda, \nulin + \nu) = (-\lambda + d_1 \nu^2)(\delta^2 (\nu+\nulin)^2 -1). \label{e: dispersion formula}
\end{align}

\subsection{Essential spectrum and exponential weights}\label{s: essential spectrum}
\paragraph{Essential spectrum in the leading edge.} The spectrum $\Sigma_+$ of the linearization about $u = v = 0$, for instance in $L^2(\R)$, is readily determined by the dispersion relation via the Fourier transform, as
\begin{align}
    \Sigma_+ = \{ \lambda \in \C : d_{\clin} (\lambda, ik; \delta, d_1) = 0 \text{ for some } k \in \R \}. 
\end{align}
Substituting for instance $k = 0$, one readily sees that the spectrum of $u = v = 0$ is unstable. We can recover marginal spectral stability at the linear spreading speed, however, using exponential weights. Indeed, let 
\begin{align}
    \mathcal{A}_+ = \begin{pmatrix}
        d_1 \partial_x^2 + \clin \partial_x + 1 & 0 \\
        1 & \delta^2 \partial_x^2 - 1
    \end{pmatrix}
\end{align}
denote the linearization about $u = v = 0$, and let $\etalin = -\nulin = \frac{1}{\sqrt{d_1}}$. Then, defining the conjugate operator $\mcl_+u = e^{\etalin \cdot} [\mathcal{A}_+ [e^{-\etalin \cdot} u(\cdot)]]$, equivalent to considering $\mathcal{A}_+$ acting on a weighted space with weight $e^{\etalin x}$, we find 
\begin{align}
    \mcl_+ = \begin{pmatrix}
        d_1 \partial_x^2 & 0 \\
        1 & \delta^2 (\partial_x^2 - 2 \etalin \partial_x + \etalin^2) - 1
    \end{pmatrix},
\end{align}
with associated spectrum
\begin{align}
    \Sigma^{\etalin}_+ = \{ \lambda \in \C : d_{\clin} (\lambda, ik - \etalin; \delta, d_1) = 0 \text{ for some } k \in \R\}. 
\end{align}
Inspecting the modified dispersion relation, we readily find marginal spectral stability in the leading edge, summarized in the following lemma. 
\begin{lemma}[Marginal stability in the leading edge]\label{l: leading edge stability}
    We have 
    \begin{align}
        \Sigma_+^{\etalin} = \{ -k^2 \in \C : k \in \R \}. 
    \end{align}
\end{lemma}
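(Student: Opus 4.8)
The plan is to compute directly the zero set that defines $\Sigma_+^{\etalin}$, using the explicit form of the dispersion relation at the linear spreading speed. The starting point is that, since $\clin = 2\sqrt{d_1}$, the first factor of $d_{\clin}(\lambda,\nu;\delta,d_1)$ is a perfect square, $d_1\nu^2 + \clin\nu + 1 = (\sqrt{d_1}\,\nu + 1)^2$, so that
\[
d_{\clin}(\lambda,\nu;\delta,d_1) = \big((\sqrt{d_1}\,\nu+1)^2 - \lambda\big)\,(\delta^2\nu^2 - 1).
\]
I would then substitute $\nu = ik - \etalin$ with $\etalin = 1/\sqrt{d_1}$: here $\sqrt{d_1}\,\nu + 1 = i\sqrt{d_1}\,k$, so the first factor becomes $-d_1 k^2 - \lambda$, which vanishes precisely when $\lambda = -d_1 k^2$.

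The remaining point is to check that the second factor $\delta^2\nu^2 - 1$, which is independent of $\lambda$, does not vanish anywhere on the weighted Fourier line $\{\Re\nu = -\etalin\}$; otherwise $d_{\clin}(\lambda, ik-\etalin;\delta,d_1)$ would vanish identically in $\lambda$ and force $\Sigma_+^{\etalin} = \C$. Writing $\nu = ik - \etalin$ one has $\Re(\delta^2\nu^2 - 1) = \delta^2(\etalin^2 - k^2) - 1 \le \delta^2\etalin^2 - 1 = \delta^2/d_1 - 1 < 0$ whenever $\delta^2 < d_1$, which holds throughout the regime $d_1 + 1/d_1 \le C$, $\delta < \overline{\delta}(C)$; hence $\delta^2\nu^2 - 1 \neq 0$ on that line. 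Combining the two observations, $d_{\clin}(\lambda, ik - \etalin;\delta,d_1) = 0$ if and only if $\lambda = -d_1 k^2$, and letting $k$ range over $\R$ (reparametrizing $\tilde k = \sqrt{d_1}\,k$) yields $\Sigma_+^{\etalin} = \{-d_1 k^2 : k \in \R\} = \{-k^2 : k \in \R\}$, which is the claim.

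There is essentially no real obstacle here; the only step requiring care is the non-vanishing of the hyperbolic factor $\delta^2\nu^2 - 1$ on the weighted Fourier line, which is exactly where the smallness of $\delta$ enters and rules out the degenerate case $\delta^2 = d_1$ in which the fast subsystem would destroy the Fredholm structure of the linearization in the leading edge. One could also phrase this factor as encoding the hyperbolic (fast) directions of the Fenichel reduction, where normal hyperbolicity — as in Proposition~\ref{p: existence reduction} — guarantees it is bounded away from zero uniformly in the parameter range considered.
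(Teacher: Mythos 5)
Your computation is correct and is exactly the ``inspection of the modified dispersion relation'' that the paper leaves to the reader: the perfect-square factorization of the first factor at $c=\clin$, the substitution $\nu = ik-\etalin$ giving $\lambda=-d_1k^2$, and the check that the $\lambda$-independent factor $\delta^2\nu^2-1$ stays away from zero on the weighted Fourier line (which indeed uses $\delta^2<d_1$, guaranteed in the paper's parameter regime). No gaps; this matches the paper's intended argument.
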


\paragraph{Essential spectrum in the wake.}Linearization \eqref{eq:main} about $u \equiv v \equiv 1$, we find essential spectrum
\begin{align}
    \Sigma_- = \{ \lambda \in \C : d^-(\lambda, ik; \delta, d_1) = 0 \text{ for some } k \in \R \},
\end{align}
where
\begin{align}
    d^-(\lambda, \nu) = \det \begin{pmatrix}
        d_1 \nu^2 + \clin \nu - 1 - \lambda & 0 \\
        1 & \delta^2 \nu^2 -1
    \end{pmatrix} = (d_1 \nu^2 + \clin \nu - 1 - \lambda)(\delta^2 \nu^2 -1).
\end{align}
We readily find that the spectrum associated to this state is stable.

\begin{lemma}[Stability in the wake] \label{l: wake stability}
    The spectrum $\Sigma_-$ of the linearization about $u \equiv v \equiv 1$ is strictly contained in the open left half plane. 
\end{lemma}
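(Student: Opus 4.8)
The plan is to exploit that the wake state $u \equiv v \equiv 1$ is a genuine constant equilibrium, so the linearization $\mathcal{A}_-$ about it has $x$-independent coefficients; for such a translation-invariant operator pencil the spectrum is purely essential and is read off directly from the Fourier symbol. Concretely, conjugating $\mathcal{A}_- - \lambda K$ by the Fourier transform replaces $\partial_x$ by $ik$ and block-diagonalizes it into the family of $2\times 2$ matrices whose determinant is $d^-(\lambda, ik)$; a nontrivial translation-invariant operator on $L^2(\R)^2$ is invertible exactly when this symbol is (uniformly) invertible in $k$, and otherwise fails to be semi-Fredholm, so $\Sigma_- = \{\lambda \in \C : d^-(\lambda, ik) = 0 \text{ for some } k \in \R\}$ with no additional filled-in components. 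I would record this first, checking only that the symbol is boundedly invertible as $|k| \to \infty$ — immediate since for $\delta > 0$ the $x$-system is elliptic of order four and for $\delta = 0$ it collapses to a single second-order scalar operator.

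The remaining step is a direct computation. Substituting $\nu = ik$ into $d^-(\lambda,\nu) = (d_1 \nu^2 + \clin \nu - 1 - \lambda)(\delta^2 \nu^2 - 1)$ yields $d^-(\lambda, ik) = (-d_1 k^2 + i\clin k - 1 - \lambda)\,(-\delta^2 k^2 - 1)$. The factor $-\delta^2 k^2 - 1 = -(\delta^2 k^2 + 1)$ is $\le -1 < 0$ for every $k \in \R$ and every $\delta \ge 0$, hence never vanishes; so a root on the imaginary $\nu$-axis can only come from the first factor, which forces $\lambda = -d_1 k^2 - 1 + i\clin k$ and therefore $\Re \lambda = -d_1 k^2 - 1 \le -1$. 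Consequently $\Sigma_- \subseteq \{\lambda \in \C : \Re \lambda \le -1\}$, which is strictly contained in the open left half plane — in fact this gives the slightly stronger fact that the wake linearization enjoys a spectral gap of size at least $1$, uniformly in $d_1$ and $\delta$.

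I do not anticipate a genuine obstacle: the essential spectrum of a constant-coefficient operator is a standard object, and the sign computation above is robust. The only matters requiring (routine) care are (i) the justification that for the pencil $\mathcal{A}_- - \lambda K$ the Fredholm dichotomy reduces to pointwise invertibility of the Fourier symbol plus uniform control as $|k| \to \infty$, so that $\Sigma_-$ is exactly the union of the dispersion curves, and (ii) checking uniformity down to $\delta = 0$, where the $v$-equation degenerates to an algebraic one; but since the bound $\Re \lambda \le -1$ is manifestly insensitive to the precise form of the $u$--$v$ coupling, the conclusion is unaffected.
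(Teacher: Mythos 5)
Your computation is correct and is precisely the argument the paper leaves implicit behind ``we readily find'': the wake state is a constant equilibrium, so $\Sigma_-$ is read off from the Fourier symbol, and since $\delta^2(ik)^2-1=-(1+\delta^2k^2)$ never vanishes, every root comes from the first factor and satisfies $\Re\lambda=-d_1k^2-1\le -1$. One caveat: you take the paper's displayed matrix for $d^-$ at face value, but its upper-right entry should be $\nu^2$ rather than $0$ (the linearization of $(uv_x)_x$ about $U=V=1$ contributes $\partial_{xx}\tilde v$, consistent with the paper's own $\tilde\Theta$ evaluated at $U=1$); including it moves the dispersion curve to $\Re\lambda=-d_1k^2-1-k^2/(1+\delta^2k^2)$, which only strengthens the bound, so, as you anticipated, the conclusion is unaffected.
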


\paragraph{Two-sided exponential weights.} Since we need an exponential weight to stabilize the essential spectrum in the leading edge but not in the wake, we define modified weights with separate growth rates on $x > 0$ and $x < 0$. Given $\eta_\pm \in \R$, we define a smooth positive weight function $\omega_{\eta_-, \eta_+} (x)$ satisfying
\begin{align}
    \omega_{\eta_-, \eta_+} (x) = \begin{cases}
        e^{\eta_+ x}, & x \geq 1, \\
        e^{\eta_- x}, & x \leq -1. 
    \end{cases}
\end{align}
Given non-negative integers $k$ and $m$, we then define the weighted Sobolev space $H^k_{\eta_-, \eta_+} (\R, \C^m)$ through the norm
\begin{align}
    \| g \|_{H^k_{\eta_-, \eta_+} (\R, \C^m)} = \| \omega_{\eta_-, \eta_+} g \|_{H^k (\R, \C^m)}.
\end{align}
When $k = 0$, we denote $H^0_{\eta_-, \eta_+} (\R, \C^m) = L^2_{\eta_-, \eta_+} (\R, \C^m)$. When it is clear from context what value of $m$ we are considering, we will abbreviate $H^k_{\eta_-, \eta_+} (\R, \C^m) = H^k_{\eta_-, \eta_+}$. We will use the notation $\omega_{\eta_-, \eta_+}^{-1}$ to denote the reciprocal function $x \mapsto \frac{1}{\omega_{\eta_-, \eta_+} (x)}$, rather than the inverse function of $x \mapsto \omega_{\eta_-, \eta_+}(x)$.

\subsection{Refined properties of fronts in the porous medium limit}
In order to establish persistence of invasion fronts for $\delta \neq 0$, we will need some finer properties of the fronts in the porous medium limit $\delta = 0$ than those captured in Lemma \ref{lem:PME}. We record these properties here, and establish existence and spatial asymptotics for these fronts in Appendix \ref{a: pme limit}. The essential spectrum and associated Fredholm properties may be computed from the asymptotic dispersion relations by Palmer's theorem \cite{Palmer2} (see e.g. any of \cite{KapitulaPromislow, FiedlerScheel, SandstedeReview} for a review), and stability of point spectrum can be established with Sturm-Liouville arguments, so we record only the results here. 

\begin{lemma}[Marginal stability of pulled fronts in the porous medium limit]\label{l: pme properties pulled}
    Fix $d_1 > \frac{1}{2}$, and let $\upl^0$ be the unique (up to translation) front solution with speed $c = \clin$ to \eqref{e: pme} guaranteed by Lemma \ref{lem:PME}. Let $\mca_\mathrm{pl}^0$ denote the linearization of \eqref{e: pme} about this front solution, and define $\mcl_\mathrm{pl}^0 = \omega_{0, \etalin} \mca_\mathrm{pl}^0 \omega_{0, \etalin}^{-1}$. Then:
    \begin{itemize}
        \item The pulled front $\upl^0$ has asymptotics
        \begin{align}\label{e:pmeanonzero}
            \upl^0(x) \sim (ax+b) e^{-\etalin x}, \quad x \to \infty
        \end{align}
        with $a > 0$. Translating the front in space, we may assume $b = 1$. 
        \item The essential spectrum of $\mcl_\mathrm{pl}^0$ is marginally stable, consisting of the union of $\Sigma_+^{\etalin}$ with the subset of the complex plane which lies on and to the left of the parabola $\Sigma_-$. 
        \item $\mcl_\mathrm{pl}^0$ has no eigenvalues $\lambda$ with $\Re \lambda \geq 0$, and there is no bounded solution to $\mcl_\mathrm{pl}^0 u = 0$. 
        \item If $\eta = \etalin + \tilde{\eta}$ with $\tilde{\eta}$ small, then $\mca_\mathrm{pl}^0: H^2_{0, \eta} \to L^2_{0, \eta}$ is Fredholm with index -1 with trivial kernel and one dimensional cokernel. 
    \end{itemize}
\end{lemma}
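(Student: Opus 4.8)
The plan is to reduce all four bullets to two ingredients: (i) the spatial asymptotics of the explicit porous‑medium front $\upl^0$ constructed in Appendix~\ref{a: pme limit}, and (ii) routine exponential‑dichotomy (Palmer) and Sturm--Liouville bookkeeping for the linearization $\mca_\mathrm{pl}^0$ of \eqref{e: pme} about $\upl^0$. The only input that is not boilerplate is the precise decay rate of $\upl^0$ at $+\infty$; everything downstream follows from the constant‑coefficient linearizations of \eqref{e: pme} at $U=0$ and $U=1$.

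\textbf{Step 1 (asymptotics).} Linearizing \eqref{e: pme} about $U=0$ annihilates the quadratic terms $(UU_x)_x$ and $-U^2$, leaving $d_1 U_{xx}+cU_x+U=0$; at $c=\clin=2\sqrt{d_1}$ the two spatial exponents collide at the double root $-\etalin$ (this is Lemma~\ref{l: linear spreading speed} for the $U$-block at $\delta=0$), so $U=0$ is a degenerate stable node and every solution decaying at $+\infty$ behaves like $(ax+b)e^{-\etalin x}$. I would then read off from the closed form of $\upl^0$ that $a\ne0$ --- precisely the assertion that the \emph{selected} front sits on the resonance and decays with the weak, pulled rate rather than the steep one --- with $a>0$ forced by positivity of $\upl^0$; a spatial translation rescales $(a,b)$ by a common factor $e^{-\etalin x_0}$ and shifts $b/a$ by $x_0$, so I may normalize $b=1$. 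Differentiating, $(\upl^0)'(x)\sim -\etalin a\, x\, e^{-\etalin x}$ as $x\to\infty$, so the translational mode has the anomalous decay $xe^{-\etalin x}$; this single fact drives the remaining steps.

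\textbf{Step 2 (essential spectrum and Fredholm index).} Writing $\mca_\mathrm{pl}^0 p=\lambda p$ as a first‑order system in $x$, its limiting coefficient matrices are those of $d_1\partial_x^2+\clin\partial_x+1-\lambda$ at $+\infty$ and $(d_1+1)\partial_x^2+\clin\partial_x-1-\lambda$ at $-\infty$ (linearizations of \eqref{e: pme} at $U=0$ and $U=1$). Conjugation by $\omega_{0,\etalin}$ acts trivially at $-\infty$ and, using that $\etalin$ is the critical weight, turns the $+\infty$ operator into $d_1\partial_x^2-\lambda$ (Lemma~\ref{l: leading edge stability}), non‑hyperbolic exactly on $\Sigma_+^{\etalin}=(-\infty,0]$; the $-\infty$ operator is non‑hyperbolic on the wake parabola $\Sigma_-=\{-(d_1+1)k^2-1+\clin\,ik:k\in\R\}$ and has empty unstable subspace strictly to its left. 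Palmer's theorem \cite{Palmer2} then identifies the set where $\mcl_\mathrm{pl}^0$ fails to be Fredholm of index $0$ with the union of $\Sigma_+^{\etalin}$ and the closed region on and to the left of $\Sigma_-$, which meets $i\R$ only at $0$ --- hence marginal. For the weight $\eta=\etalin+\tilde\eta$ with $0<\tilde\eta\ll1$: since the linear‑in‑$\eta$ term in $d_1\eta^2-\clin\eta+1$ vanishes at $\etalin$, conjugation by $\omega_{0,\eta}$ produces $d_1(\partial_x-\tilde\eta)^2-\lambda$ at $+\infty$, hyperbolic at $\lambda=0$ with a two‑dimensional unstable subspace, while at $-\infty$ ($\lambda=0$ sitting to the right of $\Sigma_-$) one has a saddle with one‑dimensional unstable subspace; both ends hyperbolic, so $\mca_\mathrm{pl}^0:H^2_{0,\eta}\to L^2_{0,\eta}$ is Fredholm with index $\dim E^u_- - \dim E^u_+ = 1-2 = -1$.

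\textbf{Step 3 (point spectrum, kernel, and the main obstacle).} In the two‑dimensional phase space of $\mca_\mathrm{pl}^0 v=0$ the node $U=0$ makes every solution decay at $+\infty$ while the saddle $U=1$ makes the solutions decaying at $-\infty$ one‑dimensional, so the bounded solutions on $\R$ are exactly the multiples of $(\upl^0)'$; by Step~1 its weighted profile $\omega_{0,\etalin}(\upl^0)'\sim x$ (resp.\ $\sim x\,e^{\tilde\eta x}$) is unbounded (resp.\ not in $H^2_{0,\eta}$), which yields both ``no bounded solution to $\mcl_\mathrm{pl}^0 u=0$'' and triviality of the kernel in $H^2_{0,\eta}$, hence a one‑dimensional cokernel. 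To rule out eigenvalues of $\mcl_\mathrm{pl}^0$ with $\Re\lambda\ge0$ I would put the real operator $\mca_\mathrm{pl}^0$ in Sturm--Liouville form and run the usual oscillation/maximum‑principle comparison against the nodeless solution $(\upl^0)'<0$ at $\lambda=0$: an $L^2_{0,\etalin}$-eigenfunction at $\Re\lambda>0$ would be nodeless and would have to decay faster than $(\upl^0)'$ on at least one side, contradicting the comparison, while $\lambda=0$ is excluded by the absence of a bounded solution. I expect the one genuinely delicate point to be Step~1 --- verifying rigorously that $a\ne0$, i.e.\ that the selected front decays with the weak rather than the steep rate, which \emph{is} the borderline ``pulled'' property and the reason the explicit construction of $\upl^0$ is carried out in Appendix~\ref{a: pme limit}; a secondary, purely technical nuisance is translating the Palmer index computations with one‑sided weights into the precise ``on and to the left of $\Sigma_-$'' description and handling the Jordan‑block limiting matrix at $+\infty$ when $\eta=\etalin$.
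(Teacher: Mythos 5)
Your Steps 2 and 3 are sound and track the paper's own (deliberately terse) treatment of this lemma: the authors likewise dispose of the essential spectrum and the Fredholm index via Palmer's theorem applied to the constant-coefficient limits at $U=0$ and $U=1$, and of the point spectrum via Sturm--Liouville arguments. Your index count $\dim E^u_- - \dim E^u_+ = 1-2=-1$ and your use of the anomalous decay $(\upl^0)'\sim -\etalin a\,x\,e^{-\etalin x}$ to rule out both a bounded kernel of $\mcl_\mathrm{pl}^0$ and an $H^2_{0,\eta}$-kernel of $\mca_\mathrm{pl}^0$ are exactly the intended arguments.

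The gap is in Step 1, and you have correctly located it but not closed it. You propose to ``read off from the closed form of $\upl^0$ that $a\neq 0$,'' but no closed form for $\upl^0$ exists when $d_1>\frac{1}{2}$: the only explicit heteroclinic available, \eqref{eq:explicitsoln}, travels with speed $\cpm=\frac{1}{\sqrt{2}}+\sqrt{2}\,d_1$, which strictly exceeds $\clin=2\sqrt{d_1}$ for $d_1\neq\frac{1}{2}$; for $d_1>\frac{1}{2}$ it is a supercritical, non-selected front with steep decay and says nothing about the front at $c=\clin$. The paper's Appendix~\ref{a: pme limit} closes this gap with a genuine dynamical-systems argument: pass to projective coordinates $\eta=\tilde{W}/\tilde{U}$, so that strong decay ($a=0$) versus weak decay ($a\neq0$) is distinguished by whether the orbit enters the fixed point $(0,-1)$ along its strong stable direction or along the center direction, and then show by a phase-plane comparison --- monotonicity in $\e$ of the stable and unstable manifold graphs near the two fixed points, combined with the derivative condition \eqref{eq:derivativecond} at a putative crossing with the explicit $\e=\frac{1}{2}$ heteroclinic --- that no strong-stable connection can exist for $\e>\frac{1}{2}$. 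Only then does $z(\tilde{x})>-1$ force $\tilde{U}'/\tilde{U}\sim-\sqrt{\e}+\sqrt{d_1}/(\tilde{x}+\kappa)$ and hence $a>0$. Without an argument of this type, the first bullet --- and with it the trivial-kernel and no-bounded-solution claims that you correctly derive from $a\neq0$ --- remains unproven.
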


\begin{lemma}[Marginal stability of pushed fronts in the porous medium limit]\label{l: pme properties pushed}
    Fix $d_1 < \frac{1}{2}$, and let $\ups^0$ denote the unique (up to translation) front solution to \eqref{e: pme} traveling with the pushed speed $\cpm(d_1) = \frac{1}{\sqrt{2}} + \sqrt{2} d_1$. Fix $\etalin < \eta_0 < \sqrt{\frac{1}{2 d_1^2}}$, let $\mca_\mathrm{ps}^0$ denote the linearization of \eqref{e: pme} about $\ups^0$, and define the weighted linearization $\mcl_\mathrm{ps}^0 = \omega_{0, \eta_0} \mca_\mathrm{ps}^0 \omega_{0, \eta_0}^{-1}$. Then:
    \begin{itemize}
        \item The pushed front $\ups^0$ has asymptotics
        \begin{align}
            \ups^0(x) \sim e^{-\etaps x}, \quad x \to \infty,
        \end{align}
        where $\etaps = - \frac{1}{\sqrt{2 d_1^2}}$, possibly after translating in space. 
        \item The essential spectrum of $\mcl_\mathrm{ps}^0$ is strictly contained in the open left half plane. 
        \item $\lambda = 0$ is a simple eigenvalue of $\mcl_\mathrm{ps}^0$, with eigenfunction $u = \omega_{0, \eta_0} \partial_x {\ups^0}$ arising from translation invariance.
        \item In particular, the previous two results imply that operator $\mca_\mathrm{ps}^0 : H^2_{0, \eta_0} \to L^2_{0, \eta_0}$ is Fredholm with index 0, one-dimensional kernel, and one-dimensional cokernel. 
        \item Apart from $\lambda = 0$, $\mcl_\mathrm{ps}^0$ has no other eigenvalues with $\Re \lambda \geq 0$. 
    \end{itemize}
\end{lemma}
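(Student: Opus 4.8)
The plan is to combine the explicit description of the pushed front from Appendix~\ref{a: pme limit} with a Fredholm computation via Palmer's theorem for the essential spectrum and a Sturm--Liouville argument for the point spectrum. First I would record what is needed about $\ups^0$ from the Nagumo representation in Appendix~\ref{a: pme limit}: for $d_1 < \tfrac12$ the front is unique up to translation and strictly monotone, and linearizing \eqref{e: pme} at $U=0$ at speed $\cpm(d_1) = \tfrac{1}{\sqrt2} + \sqrt2\, d_1$ produces the two spatial rates $-\sqrt2$ and $-\etaps$ with $\etaps = \tfrac{1}{\sqrt{2 d_1^2}}$, the roots of $d_1\nu^2 + \cpm\nu + 1 = 0$ (using $\cpm^2 - 4 d_1 = \tfrac12(1-2 d_1)^2$). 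The explicit profile shows the pushed front leaves $U = 0$ along the strong-stable direction, so $\ups^0(x) \sim e^{-\etaps x}$ and $\partial_x\ups^0(x)\sim e^{-\etaps x}$ as $x \to \infty$; a one-line estimate also gives $\sqrt2 < \etalin = 1/\sqrt{d_1} < \etaps$ for $d_1 < \tfrac12$, so the weight window $\etalin < \eta_0 < \etaps$ is nonempty and in particular $\eta_0 > \sqrt2$.

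Next I would handle the essential spectrum. Linearizing \eqref{e: pme} in the comoving frame about $u \equiv 1$ and $u \equiv 0$ gives the asymptotic operators $(d_1+1)\partial_x^2 + \cpm\partial_x - 1$ and $d_1\partial_x^2 + \cpm\partial_x + 1$; conjugating the latter by $e^{\eta_0 x}$ yields a branch of essential spectrum with $\Re\lambda \le d_1\eta_0^2 - \cpm\eta_0 + 1$, which is strictly negative precisely because $\eta_0$ lies strictly between the roots $\sqrt2$ and $\etaps$ of $d_1\eta^2 - \cpm\eta + 1$, while the $u\equiv 1$ branch has $\Re\lambda\le -1$. Both asymptotic operators are then hyperbolic with spectrum bounded away from $\{\Re\lambda \ge 0\}$, so Palmer's theorem \cite{Palmer2} shows $\mcl_\mathrm{ps}^0 - \lambda$ is Fredholm on a neighborhood of the closed right half-plane with locally constant index; comparing the dimensions of the unstable spatial subspaces of the asymptotic systems for $\lambda$ real and large shows this index vanishes there, hence throughout. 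This gives both that the essential spectrum of $\mcl_\mathrm{ps}^0$ is strictly contained in the open left half-plane, and, via unitary equivalence with the unweighted operator, that $\mca_\mathrm{ps}^0 : H^2_{0,\eta_0}\to L^2_{0,\eta_0}$ is Fredholm of index $0$.

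For the point spectrum I would use translation invariance together with Sturm--Liouville theory. Differentiating \eqref{e: pme} in $x$ gives $\mca_\mathrm{ps}^0\partial_x\ups^0 = 0$, and since $\etaps > \eta_0$ the weighted profile $\omega_{0,\eta_0}\partial_x\ups^0$ lies in $H^2(\R)$ and solves $\mcl_\mathrm{ps}^0 u = 0$; strict monotonicity of $\ups^0$ makes this eigenfunction sign-definite. Because $\mca_\mathrm{ps}^0$ is scalar and second order with strictly positive leading coefficient $d_1 + \ups^0$ --- a property preserved under the weight conjugation --- a Liouville transformation puts it in self-adjoint form on a suitable weighted $L^2$, in which the eigenfunctions of spectrum near the right half-plane (localized at $+\infty$ by the weight, at $-\infty$ by the spectral gap just established) genuinely lie. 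Hence all point spectrum near the closed right half-plane is real, and an eigenvalue with a nonvanishing eigenfunction must be the principal one; therefore $\lambda = 0$ is simple and is the largest eigenvalue, so $\mcl_\mathrm{ps}^0$ has no other eigenvalue with $\Re\lambda\ge 0$. Together with the Fredholm index $0$ above, this forces $\mca_\mathrm{ps}^0 : H^2_{0,\eta_0}\to L^2_{0,\eta_0}$ to have one-dimensional kernel and one-dimensional cokernel, completing all four assertions.

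The main obstacle I anticipate is this last step: one has to set up the Liouville reduction on the half-weighted line carefully enough that the transformed operator is genuinely self-adjoint on a space containing the relevant eigenfunctions, so that oscillation theory can convert ``sign-definite kernel element'' into simplicity of $\lambda = 0$ and the absence of further spectrum in $\{\Re\lambda\ge 0\}$. It is also crucial that the pushed front decays at the strong rate $\etaps$ rather than the weak rate $\sqrt2$, as otherwise $\omega_{0,\eta_0}\partial_x\ups^0$ would fail to be square integrable and the translational eigenfunction would not detect $\lambda=0$ in the weighted space; this is exactly where the explicit construction in Appendix~\ref{a: pme limit} is used. The essential-spectrum and index bookkeeping, by contrast, is routine once Palmer's theorem is invoked.
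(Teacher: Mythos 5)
Your proposal is correct and follows essentially the same route the paper takes: the decay rate $\etaps=1/(\sqrt{2}\,d_1)$ is read off from the explicit Nagumo-transformed front of Appendix~\ref{a: pme limit} (via $\tilde{x}\sim x/d_1$), the essential spectrum and the index-$0$ Fredholm property come from Palmer's theorem applied to the asymptotic operators $(d_1+1)\partial_x^2+\cpm\partial_x-1$ and the $e^{\eta_0 x}$-conjugate of $d_1\partial_x^2+\cpm\partial_x+1$, and the point spectrum is controlled by putting the scalar operator in self-adjoint form and invoking Sturm--Liouville theory with the sign-definite translational eigenfunction. The paper only sketches these steps; your write-up fills in the same ingredients, including the key verifications $\cpm^2-4d_1=\tfrac12(1-2d_1)^2$ and $\sqrt{2}<\etalin<\eta_0<\etaps$.
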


\begin{lemma}[Pushed-pulled transition in the porous medium limit]\label{l: transition pme limit}
    Let $\utr$ denote the unique (up to translation) solution to \eqref{e: pme} with $d_1 = \frac{1}{2}$, traveling with the speed $\cpm(\frac{1}{2}) = \sqrt{2}$. Let $\Atr^0$ denote the linearization of \eqref{e: pme} about $\utr^0$, and define the weighted linearization $\Ltr^0 = \omega_{0, \etalin} \Atr^0 \omega_{0, \etalin}^{-1}$. Then:
    \begin{itemize}
        \item The essential spectrum of $\Ltr^0$ is marginally stable, consisting of the union of $\Sigma_+^{\etalin}$ with the subset of the complex plane which lies on and to the left of the parabola $\Sigma_-$. 
        \item $\Ltr^0$ has no eigenvalues with $\Re \lambda \geq 0$, and the function $u = \omega_{0, \etalin} \partial_x \utr^0$ is the unique bounded solution to $\Ltr^0 u = 0$, up to a scalar multiple. 
        \item For $\eta > \etalin$, the operator $\Atr^0 : H^2_{0, \eta} \to L^2_{0, \eta}$ is Fredholm with index -1, trivial kernel, and one-dimensional cokernel. For $\eta_2 < \etalin$, the operator $\Atr^0 : H^2_{0, \eta_2} \to L^2_{0, \eta_2}$ is Fredholm with index 1, trivial cokernel, and one-dimensional  kernel. 
    \end{itemize}
\end{lemma}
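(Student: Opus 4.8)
The plan is to read off all four bullets from the explicit transition front of Appendix~\ref{a: pme limit} together with its asymptotic linearizations, with the only genuinely analytic input being the absence of point spectrum in the closed right half plane.

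First I would record the relevant properties of $\utr^0$, the front for \eqref{e: pme} at $d_1 = \tfrac12$, $c = \cpm(\tfrac12)$; note that here $\cpm(\tfrac12) = \clin = \sqrt2$, so the front travels at the linear spreading speed. At $U = 0$ the linearization of \eqref{e: pme} has characteristic polynomial $d_1\nu^2 + c\nu + 1 = d_1(\nu + \etalin)^2$ (using $c = 2 d_1 \etalin$ and $1 = d_1\etalin^2$), with the double root $\nu = -\etalin$, so solutions in the leading edge decay like $(ax+b)e^{-\etalin x}$. The fact that places this front \emph{at} the transition rather than in the pulled regime is that the coefficient of the linearly growing mode vanishes, $\utr^0(x) \sim b\,e^{-\etalin x}$ with $a = 0$; I would extract this either from the explicit formula in Appendix~\ref{a: pme limit} or by passing to the limit $d_1 \uparrow \tfrac12$ in Lemma~\ref{l: pme properties pushed}, whose pushed fronts decay at the steeper rate $\etaps(d_1) = 1/\sqrt{2 d_1^2} \to \etalin$. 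Consequently $\partial_x\utr^0(x) \sim -b\,\etalin\, e^{-\etalin x}$ as $x \to \infty$ and decays exponentially as $x \to -\infty$, so $\omega_{0,\etalin}\,\partial_x\utr^0$ tends to a nonzero constant at $+\infty$: it is bounded but not in $L^2_{0,\etalin}$.

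Next I would treat the essential spectrum and Fredholm indices. The (quasilinear) linearization of \eqref{e: pme} about $\utr^0$ is $\Atr^0 u = (d_1 + \utr^0) u'' + (c + 2\partial_x\utr^0) u' + (\partial_{xx}\utr^0 + 1 - 2\utr^0) u$, with asymptotic operators $(d_1 + 1)\partial_x^2 + c\partial_x - 1$ at $-\infty$ and $d_1\partial_x^2 + c\partial_x + 1$ at $+\infty$; their dispersion relations are the scalar reductions of $d^-$ (Lemma~\ref{l: wake stability}) and of $d_{\clin}$. Conjugating by $\omega_{0,\etalin}$ replaces $\partial_x$ by $\partial_x - \etalin$ at $+\infty$ only, turning $d_1\nu^2 + c\nu + 1 - \lambda$ into $d_1\nu^2 - \lambda$, so the $+\infty$ spectrum of $\Ltr^0$ is the left-opening parabola through $0$ matching $\Sigma_+^{\etalin}$, while the $-\infty$ spectrum is the stable parabola $\Sigma_-$ unchanged; Palmer's theorem \cite{Palmer2} then gives the stated essential spectrum. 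For the Fredholm index on $H^2_{0,\eta} \to L^2_{0,\eta}$ I would pass to companion form: the $+\infty$ symbol becomes $d_1(\nu - \eta + \etalin)^2$, a double root at $\nu = \eta - \etalin$, while the $-\infty$ symbol keeps one root of each sign, so $\dim E^s(A_-) = 1$ always, and $\dim E^s(A_+)$ jumps from $2$ (for $\eta < \etalin$) to $0$ (for $\eta > \etalin$) across $\eta = \etalin$; hence the index $\dim E^s(A_+) - \dim E^s(A_-)$ equals $-1$ for $\eta > \etalin$ and $+1$ for $\eta_2 < \etalin$ \cite{KapitulaPromislow, SandstedeReview}. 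Since a solution of $\Atr^0 u = 0$ that is bounded on $\R$ must decay at $-\infty$ along the one-dimensional backward-stable direction, such solutions form a one-dimensional space, realized by $\partial_x\utr^0$. For $\eta_2 < \etalin$ we have $\partial_x\utr^0 \in H^2_{0,\eta_2}$ by the asymptotics above, so the kernel is one-dimensional and the index forces trivial cokernel; for $\eta > \etalin$, $\partial_x\utr^0 \notin H^2_{0,\eta}$, so the kernel is trivial and the index forces one-dimensional cokernel; and at the critical weight $\eta = \etalin$, $\omega_{0,\etalin}\partial_x\utr^0$ is a bounded solution of $\Ltr^0 u = 0$, unique up to scalars since removing the weight from any bounded solution returns a bounded solution of $\Atr^0 u = 0$.

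It remains to exclude point spectrum with $\Re\lambda \geq 0$ and to identify the main obstacle. Dividing by $d_1 + \utr^0 > 0$ and introducing an integrating factor makes $\Ltr^0$ self-adjoint in a weighted $L^2$ pairing, hence unitarily equivalent to a Schr\"odinger operator on $\R$, so its spectrum in $\{\Re\lambda \geq 0\}$ is the essential spectrum above together with at most finitely many real eigenvalues; since the nodeless function $\omega_{0,\etalin}\partial_x\utr^0$ (nodeless because $\utr^0$ is monotone, Appendix~\ref{a: pme limit}) solves $\Ltr^0 u = 0$, Sturm--Liouville oscillation theory precludes any eigenvalue $\lambda \geq 0$, with $\lambda = 0$ surviving only as the bounded, non-$L^2$ resonance found above rather than as an eigenvalue --- precisely the pushed-to-pulled transition structure of \cite{avery22}. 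The main obstacle is this last step at the critical weight: the translational mode sits exactly on the edge of the essential spectrum, neither decaying nor growing after the weight, so $\lambda = 0$ is a resonance and one must argue carefully that no genuine eigenvalue emerges at or to the right of the imaginary axis; the other point requiring care is the vanishing of the linearly growing tail mode ($a = 0$) in the first step, which is exactly what certifies that $\utr^0$ lies at the transition. The essential-spectrum and Fredholm computations, by contrast, are routine given the explicit asymptotic dispersion relations.
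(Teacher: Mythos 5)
Your proposal is correct and takes essentially the same route as the paper, which deliberately records this lemma without a detailed proof: the essential spectrum and Fredholm indices are read off the asymptotic dispersion relations via Palmer's theorem exactly as you do, the identification of $\partial_x \utr^0$ as the unique bounded zero mode rests on the $d_1=\tfrac12$ front asymptotics of Appendix~\ref{a: pme limit} (double spatial root $\nu=-\etalin$ with vanishing linear prefactor, since $\etaps=\etalin$ there), and unstable point spectrum is excluded by the Sturm--Liouville/self-adjointness argument underlying Lemma~\ref{l: cokernel}. The only step deserving an extra line is the transfer of the no-eigenvalue statement from the self-adjoint realization back to $\Ltr^0$: the conjugating weight $\rho_{\mathrm{tr}}$ of Lemma~\ref{l: cokernel} matches $\omega_{0,\etalin}$ only as $x\to+\infty$ and decays in the wake, but any eigenfunction with $\Re \lambda \geq 0$ decays exponentially at $-\infty$ (the wake spectrum being strictly stable) and therefore lies in both weighted spaces, so the argument closes as you describe.
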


To analyze the perturbation to $\delta \neq 0$, we will need to project various quantities onto the cokernel of the linearization about a given front, which we describe in the following lemma. 
\begin{lemma}\label{l: cokernel} 
    For $j \in \{ \mathrm{pl}, \mathrm{ps}, \mathrm{tr} \}$, let $u_j$ denote the associated front described in Lemmas \ref{l: pme properties pulled} through \ref{l: transition pme limit}, with associated speed $c_j$, and let $\mca_j^0$ denote the corresponding linearization, considered on one of the weighted spaces described in Lemmas \ref{l: pme properties pulled} through \ref{l: transition pme limit} such that $\mca_j^0$ has a one-dimesional cokernel. This cokernel is spanned by
    \begin{align}
        \phi_j (x) := \frac{\rho_j(x)^2 \partial_x u_j (x)}{d_1 + u_j(x)}, \label{e: cokernel def}
    \end{align}
    where 
    \begin{align}
        \rho_j(x) = \exp (-m_j(x)),\quad m_j(x)=-\int_{x_0}^x  \frac{2u_j'(x) +c_j}{ 2(d_1+u_j(x))} dx, \label{e: m integral}
    \end{align}
    where $x_0$ is the unique point such that $u_j(x_0) = \frac{1}{2}$. 
\end{lemma}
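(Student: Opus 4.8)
The plan is to identify the one-dimensional cokernel with a weighted-$L^2$ solution space of the formal adjoint of $\mca_j^0$, put $\mca_j^0$ into Sturm--Liouville form, and then read off the relevant adjoint solution from the translational zero mode $\partial_x u_j$ of the front.

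First I would linearize \eqref{e: pme} about $u_j$ at speed $c_j$, obtaining $\mca_j^0 w = d_1 w'' + c_j w' + (u_j w)'' + (1-2u_j)w = (d_1+u_j)w'' + (c_j+2u_j')w' + (u_j''+1-2u_j)w$, and note that differentiating the traveling-wave equation \eqref{e: pme} in $x$ gives $\mca_j^0(\partial_x u_j) = 0$. Integration by parts identifies the formal $L^2$-adjoint as $\mca_j^{0,\dagger}\phi = (d_1+u_j)\phi'' - c_j\phi' + (1-2u_j)\phi$. By Lemmas \ref{l: pme properties pulled}--\ref{l: transition pme limit}, on the relevant weighted space --- $H^2_{0,\eta}$ with $\eta>\etalin$ in the pulled and transition cases, $H^2_{0,\eta_0}$ with $\etalin<\eta_0<\sqrt{1/(2d_1^2)}$ in the pushed case --- $\mca_j^0$ is Fredholm with closed range and one-dimensional cokernel. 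Since the unweighted $L^2$ pairing identifies $(L^2_{0,\eta})^*$ with $L^2_{0,-\eta}$, and $C_c^\infty$ is dense in the domain, the cokernel is isomorphic to $\{\phi\in L^2_{0,-\eta}: \mca_j^{0,\dagger}\phi = 0\}$, the distributional identity upgrading to a classical ODE by elliptic regularity; it therefore suffices to exhibit a nonzero element of this space.

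Next I would write $\mca_j^0$ in self-adjoint form. With $p := \exp\big(\int \tfrac{c_j + 2u_j'}{d_1+u_j}\,dx\big)$, which by \eqref{e: m integral} coincides with $\rho_j^2$ up to a positive multiplicative constant, one has $\mca_j^0 = \tfrac{d_1+u_j}{p}\,L$, where $Lw := (pw')' + \tfrac{p(u_j''+1-2u_j)}{d_1+u_j}\,w$ is formally self-adjoint. Hence $\mca_j^{0,\dagger}\phi = L\big(\tfrac{d_1+u_j}{p}\phi\big)$, so $\mca_j^{0,\dagger}\phi = 0$ exactly when $\tfrac{d_1+u_j}{p}\phi$ solves $L(\cdot)=0$, equivalently $\mca_j^0(\cdot)=0$. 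Substituting the known solution $\partial_x u_j$ then yields the adjoint solution
\[
\phi_j \;=\; \frac{p\,\partial_x u_j}{d_1 + u_j} \;=\; \frac{\rho_j(x)^2\,\partial_x u_j(x)}{d_1 + u_j(x)},
\]
which is precisely \eqref{e: cokernel def}.

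It remains to verify $\phi_j$ lies in the relevant dual space, which is where I expect the real work. As $x\to-\infty$ one has $u_j\to 1$, $\partial_x u_j\to 0$ exponentially, and $p \sim e^{c_j x/(d_1+1)}\to 0$, so $\phi_j\to 0$ exponentially. As $x\to+\infty$ one has $u_j,\partial_x u_j\to 0$, hence $p\sim e^{c_j x/d_1}$, and the exponential growth of $p$ only partially cancels the decay of $\partial_x u_j$; here one must combine it with the precise leading-edge asymptotics of $u_j$ from Lemmas \ref{l: pme properties pulled}--\ref{l: transition pme limit} (established in Appendix \ref{a: pme limit}, and genuinely finer than Lemma \ref{lem:PME}) to confirm that the surviving rate of $\phi_j$ falls strictly below the exponential weight. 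Concretely, one finds $\phi_j\sim e^{\sqrt 2\,x}$ in the pushed case while $\eta_0>\etalin = 1/\sqrt{d_1}>\sqrt 2$ for $d_1<\tfrac12$, and $\phi_j\sim x\,e^{\etalin x}$ in the pulled and transition cases with $\eta>\etalin$, so that $\phi_j\in L^2_{0,-\eta}$ (resp.\ $L^2_{0,-\eta_0}$) in all cases. Being a nonzero element of a one-dimensional space, $\phi_j$ then spans the cokernel; all remaining manipulations are elementary ODE computations.
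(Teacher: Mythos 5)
Your proposal is correct and follows essentially the same route as the paper: divide $\mca_j^0$ by $d_1+u_j$, pass to Sturm--Liouville/self-adjoint form via the weight $p=\rho_j^2$, and pull the translational zero mode $\partial_x u_j$ back through the adjoint to obtain \eqref{e: cokernel def}; you simply make explicit the duality identification and the asymptotic membership checks that the paper's one-sentence proof leaves implicit. The only (harmless) imprecision is that in the transition case $u_\mathrm{tr}^0$ has pure exponential decay, so $\phi_\mathrm{tr}\sim e^{\etalin x}$ without the factor $x$; this does not affect the conclusion that $\phi_\mathrm{tr}\in L^2_{0,-\eta}$.
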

\begin{proof}
    This follows by first dividing the operator $A_j^0$ by $d_1 + u_j(x)$, which is strictly positive, to put the second-order term in divergence form, and then verifying that the resulting operator becomes self-adjoint after conjugation with the non-nonegative weight $\rho_j$, and recalling that $A_j^0 (\partial_x u_j^0) = 0$ by translation invariance. 
\end{proof}



\section{Persistence of selected fronts}\label{s: existence}

\subsection{Persistence of pulled fronts}
With the reduction of Section \ref{s: regularization} in hand and the detailed description of the $\delta = 0$ limit from Section \ref{s: preliminaries}, we now establish persistence of pushed and pulled fronts for $\delta > 0$. 

\begin{proposition}[Persistence of pulled fronts away from the pushed-pulled transition]\label{p: pulled persistence}
    Fix $d_1 > \frac{1}{2}$. Then there exist $\delta_1 = \delta_1 (d_1) > 0$, depending continuously on $d_1$, and an $\eta > 0$ such that if $|\delta| < \delta_1$, the reduced equation \eqref{e: reduced tw} with $c = \clin$ has a pulled front solution $\upl(x; \delta)$ such that $\upl(x;\delta)$ converges to $\upl^0$ in $C^k_\mathrm{loc}$ as $\delta \to 0$ and has generic asymptotics
        \begin{align}
            \upl(x; \delta) &\sim (a(\delta) x + b(\delta)) e^{-\etalin x}, \quad x \to \infty, \\
            \upl(x; \delta) &\sim 1 + \mathrm{O}(e^{\eta x}), \quad x \to -\infty. 
        \end{align}
        where $a(\delta), b(\delta)$ are $C^1$ in $\delta$, and $a(0) > 0$. 
\end{proposition}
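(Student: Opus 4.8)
The plan is to use a far-field/core decomposition adapted to the pulled front, following the strategy of \cite{CAMS, avery22}, treating $\delta$ as a regular perturbation parameter in the reduced scalar equation \eqref{e: reduced tw}. Since the double root of the dispersion relation at $c = \clin$ (Lemma \ref{l: linear spreading speed}) persists for all $\delta$, the spatial decay rate $\etalin$ and the algebraically-growing leading-edge behavior do not change character as $\delta$ varies; the difficulty is purely that a naive perturbation expansion lives in the wrong function space. To set this up, I would write $\upl(x;\delta) = \chi_-(x) + w(x;\delta)$, where $\chi_-$ is a smooth cutoff interpolating between $1$ at $-\infty$ and $0$ near the leading edge, and then further decompose the remainder as $w = \beta\, \chi_+(x)\, u_\mathrm{ff}(x) + v(x)$, where $\chi_+$ localizes to $x \gg 1$, $u_\mathrm{ff}(x) = (ax+1)e^{-\etalin x}$ is the anticipated far-field ansatz carrying the $\log$-type correction associated with the Jordan block, $\beta$ (equivalently the pair $(a,b)$) is an unknown amplitude, and $v$ is the core correction sought in the unweighted–to–steep-weight space $H^2_{0,\eta}$ with $\eta > \etalin$. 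Substituting into \eqref{e: reduced tw}, one obtains a nonlinear equation $F(v,a,b;\delta) = 0$ whose linearization at $(v,a,b;0) = (0, a_0, 1; 0)$ is, by construction, the operator $\mca_\mathrm{pl}^0$ on $H^2_{0,\eta} \to L^2_{0,\eta}$ augmented by the two free parameters $(a,b)$.

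The key point is the Fredholm bookkeeping: by Lemma \ref{l: pme properties pulled}, $\mca_\mathrm{pl}^0 : H^2_{0,\eta} \to L^2_{0,\eta}$ is Fredholm of index $-1$ with trivial kernel and one-dimensional cokernel spanned by $\phi_\mathrm{pl}$ from Lemma \ref{l: cokernel}. Adjoining the parameter $a$ (whose effect is to add a multiple of $\mca_\mathrm{pl}^0(\chi_+ u_\mathrm{ff})$, or more precisely a source term not in the range, to the equation) restores surjectivity, so the map $(v,a) \mapsto F$ has invertible linearization; the parameter $b$ is fixed by the translation normalization $b = 1$ (equivalently, one quotients by the translation kernel of the full problem, which is why one imposes a phase condition rather than leaving $b$ free). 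Concretely, I would check that $\langle \phi_\mathrm{pl}, \mca_\mathrm{pl}^0(\chi_+ u_\mathrm{ff})\rangle \neq 0$ — this is the nondegeneracy that makes $a$ an effective unfolding parameter and is exactly where the genericity of the pulled front (the nonvanishing coefficient $a_0 > 0$ in \eqref{e:pmeanonzero}) enters. Granting this, the implicit function theorem in the Banach space $H^2_{0,\eta} \times \R$, with parameter $\delta$, yields a unique branch $(v(\delta), a(\delta))$, $C^1$ (indeed $C^{k-1}$) in $\delta$, with $v(0) = 0$, $a(0) = a_0 > 0$; smoothness of $f$ in $\delta$ from Proposition \ref{p: existence reduction} gives the required regularity. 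Reassembling, $\upl(x;\delta) = \chi_-(x) + a(\delta)\chi_+(x)(ax+1)e^{-\etalin x} + v(x;\delta)$ has the stated leading-edge asymptotics (the $e^{-\etalin x}$ and $x e^{-\etalin x}$ terms come from the ansatz, while $v \in H^2_{0,\eta}$ with $\eta > \etalin$ is genuinely faster-decaying and hence lower order), and the $C^k_\mathrm{loc}$ convergence $\upl(\cdot;\delta) \to \upl^0$ follows from $v(\delta) \to 0$ in $H^2_{0,\eta} \hookrightarrow C^0_\mathrm{loc}$ plus elliptic regularity bootstrapping in the reduced ODE. The wake asymptotics $\upl \sim 1 + \mathrm{O}(e^{\eta x})$ as $x \to -\infty$ follow from stable-manifold theory at the rest state $(1,1)$ of \eqref{e: reduced tw sys}, whose linearization is hyperbolic with a spectral gap uniform in small $\delta$ (Lemma \ref{l: wake stability}), so the convergence rate $\eta > 0$ can be taken uniform.

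The main obstacle, and the step warranting the most care, is setting up the far-field ansatz so that the linearization is genuinely Fredholm with the index predicted by Lemma \ref{l: pme properties pulled} and so that the nonlinear remainder $F$ maps into $L^2_{0,\eta}$ with the right $\delta$-dependence: one must verify that the $\delta$-dependent terms in $f$ — namely $\delta u_x \psi_Z$ and the $\delta$-corrections inside $u\,\psi_H$ — do not destroy the decay of the residual when evaluated on the ansatz, using $\psi_Z = \mathrm{O}(\delta)$ and the expansions of Lemma \ref{l: psi expansions}, and that the algebraic (non-exponential) growth of $u_\mathrm{ff}$ interacts correctly with the borderline weight $\eta = \etalin$ at the edge of the continuous spectrum. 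Handling the transition region $|\delta|$ small but $d_1$ bounded away from $1/2$ also requires that $\delta_1(d_1)$ degenerates as $d_1 \to 1/2^+$ (where the pushed and pulled branches collide), which is why $\delta_1$ is only asserted to depend continuously on $d_1$ rather than being uniform; the uniform-in-$d_1$ statement near $d_1 = 1/2$ is deferred to the separate analysis of the pushed-to-pulled transition.
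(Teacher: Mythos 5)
Your argument is correct in outline, but it takes a more self-contained route than the paper does. The paper's proof of Proposition \ref{p: pulled persistence} is essentially a citation: it verifies that $\upl^0$ is a \emph{generic pulled front} for the artificial scalar parabolic equation $u_t = d_1 u_{xx} + \clin u_x + f(u,u_x;\delta,d_1)$ in the sense of \cite{CAMS} (linear spreading speed, weak exponential decay $a_0>0$, marginally stable essential spectrum in the $e^{\etalin x}$-weighted space, stable point spectrum), and then invokes the persistence theorem \cite[Theorem 2]{CAMS} for regular perturbations of scalar semilinear equations, noting only that the extension from $f(u)$ to $f(u,u_x)$ is routine. What you have written is, in effect, the proof of that cited theorem unpacked in the present setting: the far-field/core ansatz $\chi_- + v + \chi_+(ax+b)e^{-\etalin x}$ with $v\in H^2_{0,\eta}$, $\eta>\etalin$, the index $-1$ Fredholm property of $\mca_\mathrm{pl}^0$ with trivial kernel, the restoration of invertibility by the far-field amplitude, and the implicit function theorem. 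This is exactly the machinery the paper deploys explicitly for the pushed-pulled transition in Section 4.3 (there with both $a$ and $b$ free plus a phase condition, since at the transition the index count is different owing to the bounded kernel element $\partial_x\utr^0$). Your version buys transparency and makes the $C^{k}$ dependence on $\delta$ visible directly; the paper's version buys brevity at the cost of importing a black box. Your treatment of the leading-edge residual is also sound: since $\psi_H(0,0;\delta)=\psi_Z(0,0;\delta)=0$, the linearization of \eqref{e: reduced tw} at $u=0$ is $d_1\partial_x^2+\clin\partial_x+1$ independently of $\delta$, so $(ax+b)e^{-\etalin x}$ solves the asymptotic equation exactly for all $\delta$ and the residual is exponentially localized.

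One point to tighten: the nondegeneracy $\langle \mca_\mathrm{pl}^0(x\chi_+ e^{-\etalin x}),\phi_\mathrm{pl}\rangle\neq 0$ is \emph{not} where $a_0>0$ enters. That pairing reduces, by integration by parts against the cokernel element, to a boundary term proportional to $\lim_{x\to\infty}e^{-\etalin x}\phi_\mathrm{pl}(x)$, which is nonzero because $\nulin$ is a simple double root of the dispersion relation ($d_{02}\neq 0$ in Lemma \ref{l: linear spreading speed}); this is \cite[Lemma 2.3]{avery22} and holds at the transition as well. The hypothesis $a_0>0$ from Lemma \ref{l: pme properties pulled} / \eqref{e:pmeanonzero} enters only at the end, to conclude $a(0)=a_0>0$ so that the continued front retains generic (rather than pure exponential) leading-edge decay. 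Also, your final reassembled formula double-counts the amplitude ($a(\delta)\chi_+(ax+1)e^{-\etalin x}$); the ansatz should carry the pair $(a(\delta),b(\delta))$ linearly, with $b$ either frozen at $1$ as you do or released and compensated by a phase condition as in Section 4.3. These are presentational, not structural, defects.
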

\begin{proof}
    Consider the reduced traveling wave equation and the associated (artifical) parabolic equation
    \begin{align}
        u_t = d_1 u_{xx} + \clin u_x + f(u, u_x; \delta, d_1). \label{e: reduced time dependent} 
    \end{align}
    By Lemma \ref{l: pme properties pulled}, the front $\upl^0$ is a generic pulled front in \eqref{e: reduced time dependent} in the sense of \cite{CAMS}: that is, it travels with the linear spreading speed, has weak exponential decay in the leading edge, has marginally stable essential spectrum in the weighted space with weight $e^{\etalin x}$, and has stable point spectrum. Since \eqref{e: reduced time dependent} is a scalar, semilinear parabolic equation which depends continuously on $\delta$, this pulled front persists as a marginally stable pulled front in \eqref{e: reduced time dependent} for $\delta \neq 0$ by \cite[Theorem 2]{CAMS}. Note that this marginal stability holds only when we view the pulled front as a solution of the artificial time dependent equation \eqref{e: reduced time dependent}: we have not yet established marginal spectral stability for the linearization about the associated front in the full problem \eqref{eq:main}. Although Theorem 2 of \cite{CAMS} is stated only for $f = f(u; \delta)$, the proof carries over to the general semilinear case with straightforward modifications to the notation. 
\end{proof}

\subsection{Persistence of pushed fronts}\label{s: pushed persistence}

\begin{proposition}[Persistence of pushed fronts away from the pushed-pulled transition]\label{p: pushed persistence}
    Fix $d_1 < \frac{1}{2}$. Then there exist $\delta_1 = \delta_1(d_1) > 0$, depending continuously on $d_1$, and $\eta > 0$ and a speed $\cps(\delta)$, $C^1$ in $\delta$ for $|\delta| < \delta_0$ such that for $|\delta| < \delta_0$, the reduced equation \eqref{e: reduced tw} has a pushed front solution $\ups(x; \delta)$ such that $\ups(x; \delta)$ converges to $\ups^0$ in $C^k_\mathrm{loc}$ as $\delta \to 0$, and has generic asymptotics
    \begin{align}
        \ups(x; \delta) &\sim e^{-\etaps x}, \quad x \to \infty, \\
        \ups(x; \delta) &\sim 1 + \mathrm{O}(e^{\eta x}), \quad x \to -\infty. 
    \end{align}
\end{proposition}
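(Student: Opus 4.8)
The plan is to run the same far-field/core perturbation argument as for the pulled front in Proposition~\ref{p: pulled persistence}, but now in the simpler, regular (Fredholm index $0$) setting afforded by Lemma~\ref{l: pme properties pushed}. The reduced equation \eqref{e: reduced tw} is a scalar semilinear equation depending $C^k$-smoothly on $(\delta, c)$, and at $\delta=0$, $c=\cpm(d_1)$ we have the pushed front $\ups^0$ with exponential decay rate $\etaps$ in the leading edge. First I would fix the weight $\eta_0 \in (\etalin, \sqrt{1/(2d_1^2)})$ from Lemma~\ref{l: pme properties pushed} and work in the space $H^2_{0,\eta_0}$, so that the linearization $\mca_\mathrm{ps}^0$ is Fredholm of index $0$ with one-dimensional kernel spanned by $\partial_x \ups^0$ and one-dimensional cokernel spanned by $\phi_\mathrm{ps}$ from Lemma~\ref{l: cokernel}.

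Next I would set up the fixed-point problem on the shifted profile. Write $u = \ups^0 + w$ with $w$ in a complement of $\mathrm{span}\{\partial_x \ups^0\}$ inside $H^2_{0,\eta_0}$ (imposing, say, $\langle w, \partial_x \ups^0\rangle = 0$ to fix the translation), and seek $(w, c)$ solving $F(w, c; \delta) := d_1 (\ups^0+w)_{xx} + c(\ups^0+w)_x + f(\ups^0 + w, (\ups^0+w)_x; \delta, d_1) = 0$. At $(w,c,\delta) = (0, \cpm(d_1), 0)$ we have $F = 0$, and the linearization $D_{(w,c)}F$ at this point is $(\dot w, \dot c) \mapsto \mca_\mathrm{ps}^0 \dot w + \dot c\, \partial_x \ups^0$. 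Since $\mca_\mathrm{ps}^0$ has cokernel spanned by $\phi_\mathrm{ps}$ and $\partial_x \ups^0$ is \emph{not} in the range of $\mca_\mathrm{ps}^0$ (equivalently $\langle \partial_x \ups^0, \phi_\mathrm{ps}\rangle \neq 0$, which follows from the positivity of $\rho_\mathrm{ps}^2/(d_1+\ups^0) > 0$ against the one-signed $\partial_x \ups^0$), the extended linearization is an isomorphism from (complement)$\times\R$ onto $L^2_{0,\eta_0}$. The implicit function theorem then yields a unique $C^k$ branch $(w(\delta), \cps(\delta))$ with $w(0) = 0$, $\cps(0) = \cpm(d_1)$, giving the pushed front $\ups(\cdot;\delta) = \ups^0 + w(\delta)$, which converges to $\ups^0$ in $H^2_{0,\eta_0}$ and hence in $C^k_\mathrm{loc}$ (after bootstrapping regularity using the equation, since $f$ is $C^k$). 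This also records $\cps$ as $C^1$ (indeed $C^k$) in $\delta$.

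For the asymptotics, I would argue that since $w(\delta) \in H^2_{0,\eta_0}$ with $\eta_0 > \etalin$ and the leading-edge linearization at speed $\cps(\delta)$ near $\cpm(d_1)$ still has its slowest decay rate strictly larger than $\etalin$ and close to $\etaps$ (the roots of the leading-edge characteristic equation depend continuously on $c$ and are simple at $c = \cpm(d_1)$ since $\etaps \neq \etalin$ for $d_1 < 1/2$), the front decays like $e^{-\etaps(\delta) x}$ with $\etaps(\delta) \to \etaps$; this is a standard consequence of exponential dichotomies for the asymptotic linear ODE, e.g. via the stable-manifold description of decaying solutions in the leading edge. In the wake, linearization about $u\equiv 1$ in \eqref{e: reduced tw} is hyperbolic with the stable/unstable split inherited from Lemma~\ref{l: wake stability} and Proposition~\ref{p: existence reduction} ($\psi_H(1,0;\delta)=0$), so convergence to $1$ is exponential at some rate $\eta > 0$ uniform for small $\delta$. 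I would combine these to obtain the stated asymptotics. The main obstacle is not the contraction argument itself — it is genuinely a regular perturbation once Fenichel's reduction has been carried out — but rather verifying cleanly that $\partial_x \ups^0 \notin \mathrm{Range}(\mca_\mathrm{ps}^0)$, i.e. that the pairing $\langle \partial_x \ups^0, \phi_\mathrm{ps}\rangle$ is nonzero; this uses the explicit form of $\phi_\mathrm{ps}$ from Lemma~\ref{l: cokernel} together with the fact that $\partial_x \ups^0$ has a strict sign, so that the integrand $\rho_\mathrm{ps}(x)^2 (\partial_x \ups^0)^2/(d_1 + \ups^0)$ is strictly positive and integrable in the weighted space, hence its integral is nonzero.
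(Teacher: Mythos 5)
Your proposal is correct and follows essentially the same route as the paper: an implicit function theorem argument in the weighted space $H^2_{0,\eta_0}$ with $\etalin<\eta_0<\etaps$, with the speed $c$ as the extra unknown compensating for the translational kernel, and with the key nondegeneracy being $\langle \partial_x \ups^0, \phi_\mathrm{ps}\rangle>0$ computed from the explicit cokernel element of Lemma~\ref{l: cokernel} exactly as in Lemma~\ref{l: pushed continuation linear independence}. The only cosmetic differences are that the paper phrases the removal of the kernel via a bordered system (Fredholm bordering lemma) rather than restricting $w$ to a complement, and writes the ansatz as $\chi_-+v$ rather than $\ups^0+w$; both are equivalent, and your additional care with the leading-edge and wake asymptotics is consistent with (indeed slightly more explicit than) what the paper records.
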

\begin{proof}
    This is a simple bifurcation theory argument, but we repeat it in order to compute asymptotics for the pushed speed. We fix $\etalin < \eta_0 < \etaps$, and look for front solutions to \eqref{e: reduced tw} in the form of the ansatz
    \begin{align}
        \ups(x) = \chi_-(x) + v(x), 
    \end{align}
    where $\chi_-(x)$ is a smooth positive cutoff function satisfying
    \begin{align} \label{e : chiminus}
        \chi_- (x) = \begin{cases}
            1, & x \leq -3, \\
            0, & x \geq -2,
        \end{cases}
    \end{align}
    and we require $v \in H^2_{0, \eta_0} (\R, \R)$. Inserting this ansatz into \eqref{e: reduced tw} leads to an equation $F(v, c; \delta) = 0$, where 
    \begin{align}
        F_{\mathrm{ps}} : \mathcal{U} \subset H^2_{0, \eta_0} \times \R \times (-\overline{\delta}, \overline{\delta}) \to L^2_{0, \eta_0}
    \end{align}
    is $C^1$ in all arguments. Note that $F_{\mathrm{ps}}(v_0; \cpm(d_1), 0) = 0$, with $v_0 = \ups^0 - \chi_-$. Here, $\mathcal{U}$ is a sufficiently small neighborhood of $v_0$ such that 
    \begin{align*}
        - \frac{d_1}{2} < \chi_- + v_0 < M, \quad |\chi_-' + v_0'| \leq M,
    \end{align*}
    and hence $\ups = \chi_- + v_0$ remains in the region where the reduction of Proposition \ref{p: existence reduction} is valid.
    
    Linearizing about $v_0$, we find $D_v F_{\mathrm{ps}}(v_0; \cpm(d_1), 0) = \mca_\mathrm{ps}^0 : H^2_{0, \eta_0} \subset L^2_{0, \eta_0} \to L^2_{0, \eta_0}$ which is Fredholm with index 0 by Lemma \ref{l: pme properties pushed}. The linearization is not invertible in this space, however, since the translation invariance gives rise to a kernel, $\mca_\mathrm{ps}^0 (\partial_x \ups^0) = 0$, and since $\eta_0 < \etaps$, the latter of which is the decay rate of $\ups^0$, we have $\partial_x \ups^0 \in H^2_{0, \eta_0}$. 

    To recover invertibility, we add an additional equation which fixes the spatial translation of solutions, and add the speed $c$ as an additional variable to compensate. That is, we define
    \begin{align}
        G_{\mathrm{ps}}(v, c; \delta) = \begin{pmatrix}
            F_{\mathrm{ps}}(v, c; \delta) \\
            \langle \chi_-+v, \partial_x \ups^0 \rangle - \langle \ups^0, \partial_x \ups^0 \rangle 
        \end{pmatrix}.
    \end{align}
    We still have a solution $G_{\mathrm{ps}}(v_0, \cpm(d_1); 0) = 0$ with $\delta = 0$. Linearizing in the joint variable $(v, c)$ at this solution, we find
    \begin{align}
        D_{(v,c)} G_{\mathrm{ps}}(v_0, \cpm(d_1); 0) = \begin{pmatrix} 
        \mca_\mathrm{ps}^0 & \partial_x \ups^0 \\
        \langle \cdot, \partial_x \ups^0 \rangle & 0
        \end{pmatrix}. 
    \end{align}
    By the Fredholm bordering lemma, $D_{(v,c)} G_{\mathrm{ps}}(v_0, \cpm(d_1); 0)$ is Fredholm with index 0, and so is invertible if and only if it has trivial kernel. A pair $(\tilde{v}, \tilde{c})$ belongs to the kernel of this linearization if and only if 
    \begin{align}
        \mca_\mathrm{ps}^0 \tilde{v} + \tilde{c} \partial_x \ups^0 &= 0, \\
        \langle \tilde{v}, \partial_x \ups^0 \rangle &= 0. 
    \end{align}
    By Lemma \ref{l: pushed continuation linear independence}, below, $\partial_x \ups^0$ is not in the range of $\mca_\mathrm{ps}^0$, and hence the first equation can only be satisfied if $\tilde{c} = 0$ and $\tilde{v} = \alpha \partial_x \ups^0$ for some $\alpha \in \R$. But then the second condition becomes $\alpha \| \partial_x \ups^0 \|_{L^2}^2 = 0$, which can only hold if $\alpha = 0$, so the kernel must be trivial. The desired result then follows by applying the implicit function theorem. 
\end{proof}

\begin{lemma}\label{l: pushed continuation linear independence}
    Fix $d_1 < \frac{1}{2}$ and $\etalin < \eta_0 < \etaps$. The function $\partial_x \ups^0$ is not in the range of $\mca_\mathrm{ps}^0 : H^2_{0, \eta_0} \to L^2_{0, \eta_0}$. 
\end{lemma}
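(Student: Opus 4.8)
The plan is to reduce the claim to the non‑vanishing of a single, explicitly computable integral, using the Fredholm structure of $\mca_\mathrm{ps}^0$ from Lemma~\ref{l: pme properties pushed} and the generator of the cokernel from Lemma~\ref{l: cokernel}. By Lemma~\ref{l: pme properties pushed}, $\mca_\mathrm{ps}^0 : H^2_{0,\eta_0}\to L^2_{0,\eta_0}$ is Fredholm of index $0$ with one‑dimensional kernel spanned by $\partial_x\ups^0$ and one‑dimensional cokernel, and by Lemma~\ref{l: cokernel} the cokernel is spanned by $\phi_\mathrm{ps} = \rho_\mathrm{ps}^2\,\partial_x\ups^0/(d_1+\ups^0)$ as in \eqref{e: cokernel def}--\eqref{e: m integral}; equivalently, the compatibility condition for solvability of $\mca_\mathrm{ps}^0 w = g$ is $\langle g,\phi_\mathrm{ps}\rangle = 0$, where $\langle\cdot,\cdot\rangle$ is the $L^2(\R)$ pairing (under which $L^2_{0,-\eta_0}$ is dual to $L^2_{0,\eta_0}$). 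Since $\ups^0$ is smooth, decays exponentially at both ends, and has decay rate $\etaps>\eta_0$ as $x\to+\infty$, we have $\partial_x\ups^0\in H^2_{0,\eta_0}\subset L^2_{0,\eta_0}$, so it suffices to show
\[
  \langle \partial_x\ups^0,\phi_\mathrm{ps}\rangle = \int_\R \frac{\rho_\mathrm{ps}(x)^2\,\big(\partial_x\ups^0(x)\big)^2}{d_1+\ups^0(x)}\,dx \neq 0 .
\]
The integrand is strictly positive (as $\ups^0$ is strictly monotone and $d_1+\ups^0>0$), so the only issue is to verify that the integral converges, and this is exactly where the hypothesis $d_1<\tfrac12$ is used.

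For convergence I would invoke the front asymptotics recorded in Lemma~\ref{l: pme properties pushed} (established in Appendix~\ref{a: pme limit}). As $x\to+\infty$, $\ups^0(x)\sim e^{-\etaps x}$ with $\etaps = 1/(\sqrt2\,d_1)$, hence $(\partial_x\ups^0)^2\sim\etaps^2 e^{-2\etaps x}$; meanwhile the integrand defining $m_\mathrm{ps}$ in \eqref{e: m integral} converges, with exponentially small error, to $-\cpm(d_1)/(2d_1)$, so $\rho_\mathrm{ps}(x)^2 = e^{-2m_\mathrm{ps}(x)}\sim \mathrm{const}\cdot e^{(\cpm(d_1)/d_1)\,x}$. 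Using $\cpm(d_1)/d_1 = \big(\tfrac1{\sqrt2}+\sqrt2\,d_1\big)/d_1 = \etaps + \sqrt2$, the full integrand behaves like $\mathrm{const}\cdot e^{(\sqrt2-\etaps)x}$ as $x\to+\infty$, and $\sqrt2-\etaps = (2d_1-1)/(\sqrt2\,d_1)<0$ precisely because $d_1<\tfrac12$, so it decays exponentially. As $x\to-\infty$, the linearization of \eqref{e: pme} about $U\equiv1$ has a positive stable exponent, so $\partial_x\ups^0$ decays exponentially, while $\rho_\mathrm{ps}(x)^2$ stays bounded (indeed $\to0$), since there the integrand of $m_\mathrm{ps}$ tends to the positive constant $\cpm(d_1)/(2(d_1+1))$, forcing $m_\mathrm{ps}(x)\to+\infty$. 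Hence the integral converges and equals a strictly positive number, in particular nonzero, so $\partial_x\ups^0\notin\mathrm{ran}\,\mca_\mathrm{ps}^0$.

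The only delicate point — and the reason the pushed hypothesis is essential — is convergence at $+\infty$: there the Sturm--Liouville weight $\rho_\mathrm{ps}^2$ grows while $(\partial_x\ups^0)^2$ decays, and the decay must dominate. It does exactly when $\etaps>\sqrt2$, i.e.\ when $d_1<\tfrac12$; equivalently, in that regime $\phi_\mathrm{ps}$ is a genuine element of the dual space $L^2_{0,-\eta_0}$, consistent with Lemma~\ref{l: cokernel}. Nothing beyond quoting the front asymptotics and this elementary exponent bookkeeping is required; the sign of the integral is then automatic from positivity of the integrand.
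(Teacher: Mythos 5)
Your proposal is correct and follows essentially the same route as the paper: pair $\partial_x \ups^0$ against the cokernel generator $\phi_\mathrm{ps}$ from Lemma \ref{l: cokernel} and observe that the resulting integrand is strictly positive. The only addition is your explicit verification that the integral converges (the exponent bookkeeping $\cpm(d_1)/d_1 - 2\etaps = \sqrt{2}-\etaps < 0$ for $d_1 < \tfrac12$), which the paper leaves implicit but which is a worthwhile check since it is exactly where the pushed hypothesis enters.
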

\begin{proof}
    Recall from Lemmas \ref{l: pme properties pushed} and \ref{l: cokernel} that $\mca_\mathrm{ps}^0$ on $H^2_{0, \eta_0}$ is Fredholm with trivial kernel and one-dimensional cokernel spanned by the function $\phi_\mathrm{ps}$ given in \eqref{e: cokernel def}. 

    The desired condition is then equivalent to $\langle \partial_x \ups^0, \phi_\mathrm{ps} \rangle_{L^2} \neq 0$. We then conclude
    \begin{align}
        \langle \partial_x \ups^0, \phi_\mathrm{ps} \rangle = \int_\R \frac{\rho(x)^2 | \partial_x \ups^0 (x)|^2}{d_1 + \ups^0(x)} \, dx > 0, 
    \end{align}
    as desired. 
\end{proof}

Having established persistence of the pushed fronts, we now compute the leading order expansion of the pushed front speed. From Proposition \ref{p: pushed persistence}, we find in particular a solution $(\vps(\delta), \cps(\delta))$ to $F(\vps(\delta), \cps(\delta); \delta) = 0$, which we may write as $\vps(\delta) = v_0 + \tilde{v}, \cps(\delta) = \cpm(d_1) + \tilde{c}$, where $\tilde{v}$ and $\tilde{c}$ are both at least order $\delta$. Expanding the equation $F(v_0 + \tilde{v}, \cpm(d_1) + \tilde{c}; \delta) = 0$, and in particular using Lemma \ref{l: psi expansions} to express expansions of $f(u, u_x; \delta)$ in terms of higher derivatives of $\ups^0$, we find
\begin{align}
    0 &= \mca_\mathrm{ps}^0 \tilde{v} + \tilde{c} \partial_x \ups^0 + \delta^2 (\partial_x \ups^0 \partial_x^3 \ups^0 + \ups^0 \partial_x^4 \ups^0) + \mathrm{O}(|\tilde{v}|^2, \delta^2 |\tilde{v}|,  \tilde{c}^2, \delta^4) \\
    0 &= \mca_\mathrm{ps}^0 \tilde{v} + \tilde{c} \partial_x \ups^0 + \delta^2 \partial_x (\ups^0 \partial_x^3 \ups^0) + \mathrm{O}(|\tilde{v}|^2, \delta^2 |\tilde{v}|, \tilde{c}^2, \delta^4). 
\end{align}
From this, we may infer that $\tilde{v}, \tilde{c}$ are both actually order $\delta^2$. Projecting onto the cokernel eliminates the first term, leaving only the scalar equation
\begin{align}
    \tilde{c} = -\frac{\langle \partial_x (\ups^0 \partial_x^3 \ups^0), \phi_\mathrm{ps} \rangle}{\langle \partial_x \ups^0, \phi_\mathrm{ps} \rangle} \delta^2 + \mathrm{O}(\delta^4),
\end{align}
and hence
\begin{align}
    \cps(\delta) = \cpm(d_1) -\frac{\langle \partial_x (\ups^0 \partial_x^3 \ups^0), \phi_\mathrm{ps} \rangle}{\langle \partial_x \ups^0, \phi_\mathrm{ps} \rangle} \delta^2 + \mathrm{O}(\delta^4). \label{e:pexp}
\end{align}
We now explicitly evaluate the integrals in the scalar products in \eqref{e:pexp}. We will do this by re-expressing integrals over $x$ as integrals over $u$, which turn out to be integrals of rational functions which may be computed explicitly. The key observation is that in the pushed front regime $d_1 \leq \frac{1}{2}$, one may explicitly solve for the inverse of $x \mapsto \ups^0(x)$, finding $\psi(u(x)) = x$ with
\begin{align*}
    \psi(u)=\sqrt{2}((1 + d_1) \log(1 - u) - d_1 \log(u));
\end{align*}
see \eqref{e:fexpl}. Note that by choosing this expression for the inverse, we are fixing a particular spatial translate of $\ups^0$ such that $\ups^0 (x_0) = \frac{1}{2}$, where
\begin{align*}
    x_0 = \psi \left( \frac{1}{2} \right) = - \sqrt{2} \log 2. 
\end{align*}
All $x$ derivatives of $\ups^0$ in \eqref{e:pexp} may be expressed as functions of $u$, e.g. 
\begin{align*}
    \partial_x \ups^0(x) = \frac{1}{\psi'(\ups^0(x))}. 
\end{align*}
Evaluating the resulting integral of rational functions of $u$, we find
\begin{align}
    \langle \partial_x \ups^0, \phi_\mathrm{ps} \rangle = \frac{4 \sqrt{2} \pi  d_1 (d_1+1) \csc (2 \pi 
   d_1)}{6 d_1+3}
\end{align}
Similarly, expressing higher derivatives of $\ups^0$ as rational functions of $u$ and evaluating the resulting integral, we find
\begin{align*}
    \langle \partial_x (\ups^0 \partial_x^3 \ups^0), \phi_\mathrm{ps} \rangle = \frac{\pi  d_1 \left(-18 (d_1+1)^{2 d_1+3} d_1^{-2
   d_1}+(2 d_1 (71 d_1+134)+149) d_1+23\right) \csc (2
   \pi  d_1)}{3 (2 d_1+1)^2}
\end{align*}
Dividing these two expressions, we find
\begin{align}
        \cps(\delta) &= \cpm(d_1) +c_\mathrm{ps,2}\delta^2 + \mathrm{O}(\delta^4), \nonumber \\ c_\mathrm{ps,2}&= -\frac{(6 d_1+3) \left(-18 (d_1+1)^{2 d_1+3} d_1^{-2
   d_1}+(2 d_1 (71 d_1+134)+149) d_1+23\right)}{12
   \sqrt{2} (d_1+1) (2 d_1+1)^2},\label{e:pexp2}
\end{align}
which concludes the proof of the statement on pushed fronts in Theorem \ref{t: main}, up to the marginal stability, which is established in Section \ref{s: stability}.

\subsection{Persistence of the pushed-pulled transition}

By Lemma \ref{l: transition pme limit}, the limiting porous medium equation undergoes a pushed-pulled transition at $d_1 = \frac{1}{2}$. To continue this transition point in $\delta$, we look for solution to the reduced traveling wave equation \eqref{e: reduced tw} in the form of the ansatz
\begin{align}
    u (x) = \chi_-(x) + w(x) + \chi_+(x) (ax + b) e^{-\etalin x}, \label{e: pulled ansatz}
\end{align}
where $a, b \in \R$, $\chi_+(x)=\chi_-(-x)$, and $w \in H^2_{0, \eta} (\R, \R)$ for $\eta = \etalin+\tilde{\eta}$ with $\tilde{\eta} > 0$ small. We will use the implicit function theorem to solve for $w, a,$ and $b$ as functions of $d_1$ and $\delta$. The pushed-pulled transition is then characterized by strong leading edge decay, $a(d_1, \delta) = 0$ \cite{avery22}. 

Inserting the ansatz \eqref{e: pulled ansatz} into \eqref{e: reduced tw}, we arrive at an equation
\begin{align}
    F_\mathrm{pl}(w, b, a; d_1, \delta) = 0. 
\end{align}
The form of the ansatz, capturing explicitly leading order behavior in the wake and leading edge, guarantees that $F_\mathrm{pl}$ preserves exponential localization of $w$, as follows.
\begin{lemma}
    Fix $\tilde{\eta} > 0$ small, and set $\eta = \etalin + \tilde{\eta}$. There exists $\eps > 0$ and a neighborhood $\mathcal{U} \subset H^2_{0, \eta}$ of the function $w_0 = \utr^0 - \chi_- - \chi_+ e^{\etalin x}$ such that
    \begin{align}
        F_\mathrm{pl} : \mathcal{U} \times \R^2 \times \left( \frac{1}{2} - \eps, \frac{1}{2} + \eps \right) \times (-\eps, \eps) \to L^2_{0, \eta} 
    \end{align}
    is well-defined and $C^k$ in all arguments. 
\end{lemma}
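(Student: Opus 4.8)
The plan is to verify that the map $F_\mathrm{pl}$ is well-defined on the stated domain with values in $L^2_{0,\eta}$, and then that it is $C^k$ jointly in all arguments. The substitution $u = \chi_- + w + \chi_+(ax+b)e^{-\etalin x}$ into the reduced traveling wave equation \eqref{e: reduced tw} produces, after grouping, $F_\mathrm{pl} = d_1 u_{xx} + c u_x + f(u, u_x; \delta, d_1)$ with $c = \clin$ held fixed. The key point is to track exponential localization: by construction, $\chi_-$ equals $1$ for $x \leq -3$, and $\chi_+(ax+b)e^{-\etalin x}$ reproduces the leading-order leading-edge behavior for $x \geq 3$, so the ansatz already solves the equation to leading order at both ends. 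First I would split $\R$ into the three regions $x \leq -3$, $|x| \leq 3$, and $x \geq 3$ and estimate $F_\mathrm{pl}$ on each. On $x \leq -3$: there $u = 1 + w$, and since $\chi_-'' = \chi_-' = 0$, $F_\mathrm{pl}$ reduces to $d_1 w_{xx} + c w_x + f(1+w, w_x; \delta, d_1)$; using $f(1, 0; \delta, d_1) = 0$ (which follows from $\psi_H(1,0;\delta) = 0$ in Proposition \ref{p: existence reduction} together with $1 - 1^2 = 0$), a Taylor expansion of $f$ about $(1,0)$ gives $F_\mathrm{pl} = (\text{linear and higher-order terms in } w, w_x)$, each of which lies in $L^2_{0,\eta}$ when $w \in H^2_{0,\eta}$, using that the weight $\omega_{0,\eta}$ is bounded on $x \leq -3$. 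On the middle region, everything is smooth and compactly supported, so there is no localization issue.

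The substantive region is the leading edge $x \geq 3$, where the weight $\omega_{0,\eta} = e^{\eta x}$ grows. Here $u = w + (ax+b)e^{-\etalin x}$, and I would expand $f$ about $u = u_x = 0$: since $f(0,0;\delta,d_1) = 0$ and $\partial_u f(0,0;\delta,d_1) = 1$ (from $\psi_H(0,0;\delta) = 0$ and the $u - u^2$ term), the linear part of the full operator acting on $(ax+b)e^{-\etalin x}$ is precisely $d_1 \partial_{xx} + \clin\partial_x + 1$ applied to it, which vanishes identically because $(ax+b)e^{-\etalin x}$ spans the generalized kernel at the double root $\nulin = -\etalin$ guaranteed by Lemma \ref{l: linear spreading speed}. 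Thus the $\O(1)$ piece on $x \geq 3$ is the sum of (i) linear terms in $w$, which are in $L^2_{0,\eta}$ since $\eta$ is admissible, and (ii) the nonlinear remainder, which is quadratic or higher in $u = w + (ax+b)e^{-\etalin x}$ and its derivative; the worst contribution is of the form $(ax+b)^2 e^{-2\etalin x}$ times a bounded factor, and $\omega_{0,\eta}(ax+b)^2 e^{-2\etalin x} = (ax+b)^2 e^{(\eta - 2\etalin)x}$, which is in $L^2(x \geq 3)$ precisely because $\eta = \etalin + \tilde\eta < 2\etalin$ for $\tilde\eta$ small. The cross terms $w \cdot (ax+b)e^{-\etalin x}$ and the $\delta u_x \psi_Z$, $u\psi_H$ contributions are handled the same way, noting $\psi_H, \psi_Z$ are $C^k$ and vanish at the origin. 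This also explains the smallness conditions on $\eps$: we need $d_1$ near $\tfrac12$ so that Lemma \ref{l: transition pme limit} applies and $\etalin$ stays bounded, we need $|\delta|$ small so Proposition \ref{p: existence reduction}'s reduction is valid on the relevant $(U,W)$ range (the neighborhood $\mathcal{U}$ is chosen so $-d_1/2 < u < M$, $|u_x| \leq M$), and we need the $a$-range bounded so that the quadratic leading-edge terms remain controlled uniformly.

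For the $C^k$ regularity in $(w, b, a, d_1, \delta)$, I would argue that $F_\mathrm{pl}$ is built from: the fixed cutoffs $\chi_\pm$; the polynomial-exponential term $(ax+b)e^{-\etalin x}$, which is smooth in $a, b$ and (via $\etalin = 1/\sqrt{d_1}$) in $d_1$; the Nemytskii operator associated to $f$, which is $C^k$ because $f$ is $C^k$ in all its arguments (Proposition \ref{p: existence reduction}) and $H^2_{0,\eta} \hookrightarrow C^1_b$ weighted appropriately, so composition is as smooth as $f$; and the constant-coefficient differential operators $\partial_x, \partial_{xx}$, which are bounded $H^2_{0,\eta} \to L^2_{0,\eta}$. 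The $d_1$-dependence enters both through $f$ and through $\etalin$, but both are smooth, and differentiating the ansatz in $d_1$ produces terms like $x^2 e^{-\etalin x}$ which are still more localized than the weight permits, so the derivatives also land in $L^2_{0,\eta}$. The main obstacle is the bookkeeping in the leading edge: one must check that \emph{every} term generated by expanding the nonlinearity against the polynomially-growing mode $(ax+b)e^{-\etalin x}$ decays fast enough to survive multiplication by $e^{\eta x}$, and the linchpin is the inequality $\etalin + \tilde\eta < 2\etalin$, i.e. that the quadratic nonlinearity doubles the decay rate while the weight only adds a small $\tilde\eta$; this is exactly the mechanism that makes the far-field/core decomposition work, and it is where the choice of $\tilde\eta$ small is used essentially.
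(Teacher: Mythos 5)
Your argument is correct and is precisely the (unwritten) argument the paper has in mind: the lemma is stated without proof, justified only by the remark that the ansatz ``captures explicitly leading order behavior in the wake and leading edge,'' and your three-region splitting, the annihilation of $(ax+b)e^{-\etalin x}$ by $d_1\partial_x^2+\clin\partial_x+1$ at the double root, and the comparison $\eta-2\etalin=\tilde\eta-\etalin<0$ for the quadratic remainder are exactly the mechanisms that make it work. One correction to your regularity discussion: the sentence claiming that differentiating the ansatz in $d_1$ produces terms like $x^2e^{-\etalin x}$ that are ``more localized than the weight permits'' is false as stated, since $e^{\eta x}\,x^2e^{-\etalin x}=x^2e^{\tilde\eta x}$ grows. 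The correct reason $\partial_{d_1}F_\mathrm{pl}$ lands in $L^2_{0,\eta}$ is that the identity $[d_1\partial_x^2+\clin(d_1)\partial_x+1]\,(ax+b)e^{-\etalin(d_1)x}\equiv 0$ holds for every $d_1$ on $\{x\geq 3\}$ (where $\chi_+\equiv 1$), so differentiating in $d_1$ preserves the cancellation; what survives is either supported in the transition region $[2,3]$ where $\chi_+$ varies, or comes from the nonlinear terms, which again enjoy the doubled decay rate. With that repair, the proof is complete.
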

Restricting to a neighborhood $\mathcal{U}$ of $w_0$ again ensures that the reduction of Proposition \ref{p: existence reduction} remains valid for the solutions considered here. 

When $\delta = 0$, by Lemma \ref{l: transition pme limit}, we have a solution $F_\mathrm{pl} (w_0, 1, 0; \frac{1}{2}, 0)$. The linearization in $w$ about this solution is $D_w F_\mathrm{pl} (w_0, 1, 0; \frac{1}{2}, 0) = \Atr^0$, which is Fredholm with index -1 by Lemma \ref{l: transition pme limit}. By the Fredholm bordering lemma, the joint linearization $D_{(w,a,b)} F_\mathrm{pl} (w_0, 1, 0; \frac{1}{2}, 0)$ is then Fredholm with index 1. To recover invertibility, we therefore need to add an extra condition which fixes the spatial translate of the solutions, so we define
\begin{align}
    G_\mathrm{pl} (w, b, a; d_1, \delta) = \left(\begin{array}{c}
        F_\mathrm{pl}(w, b, a; d_1, \delta) \\
        \langle w + (ax + b) \chi_+ e^{-\etalin x}, \partial_x \utr^0 \rangle - \langle \utr^0, \partial_x \utr^0 \rangle 
    \end{array}\right)
\end{align}
and solve $G_\mathrm{pl}(w,b,a, d_1, \delta) = 0$. Note that we still have $G_\mathrm{pl}(w_0, 1, 0; \frac{1}{2}, 0) = 0$. 
\begin{lemma}
    The joint linearization 
    \begin{align}
        D_{(w,a,b)} G_{\mathrm{pl}} (w_0, 1, 0; \frac{1}{2}, 0) : H^2_{0, \eta} \times \R^2  \to L^2_{0, \eta}
    \end{align}
    is invertible. 
\end{lemma}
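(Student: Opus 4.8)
The plan is to show that the joint linearization $D_{(w,a,b)} G_{\mathrm{pl}}(w_0, 1, 0; \tfrac12, 0)$ is both Fredholm of index $0$ and injective; by the standard equivalence for index-$0$ operators, injectivity then yields invertibility. Fredholmness of index $0$ follows immediately from the Fredholm bordering lemma: by Lemma~\ref{l: transition pme limit}, $D_w F_\mathrm{pl}(w_0,1,0;\tfrac12,0) = \Atr^0 : H^2_{0,\eta} \to L^2_{0,\eta}$ is Fredholm of index $-1$ (since $\eta > \etalin$), bordering by the two extra columns $\partial_a F_\mathrm{pl}$ and $\partial_b F_\mathrm{pl}$ raises the index to $+1$, and appending the extra scalar row (the translation-fixing condition $\langle \cdot, \partial_x \utr^0\rangle$) lowers it back to $0$. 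So the content of the lemma is the injectivity statement.

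To prove injectivity, I would take an element $(\tilde w, \tilde a, \tilde b)$ of the kernel and show it vanishes. The first $L^2_{0,\eta}$-component of $D_{(w,a,b)} G_{\mathrm{pl}} = 0$ reads, after unpacking the ansatz \eqref{e: pulled ansatz} at $\delta=0$, as $\Atr^0 \tilde w + \tilde a\, \Atr^0(\chi_+ x e^{-\etalin x}) + \tilde b\, \Atr^0(\chi_+ e^{-\etalin x}) = 0$, i.e. $\Atr^0 \big(\tilde w + \chi_+(\tilde a x + \tilde b) e^{-\etalin x}\big) = 0$ as an equation in the larger space. Set $\Phi := \tilde w + \chi_+(\tilde a x + \tilde b)e^{-\etalin x}$. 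Away from a compact set, $\Phi$ is a bounded (in fact, at most linearly-times-exponentially-weighted) solution of $\Atr^0 \Phi = 0$; the leading-edge asymptotics built into the ansatz mean $\Phi$ corresponds to a genuinely bounded solution of $\Ltr^0 (\omega_{0,\etalin}\Phi) = 0$ \emph{only} if $\tilde a = 0$, since a nonzero $\tilde a$ forces the conjugated function $\omega_{0,\etalin}\Phi$ to grow linearly at $+\infty$, contradicting the assumed decay of $\tilde w \in H^2_{0,\eta}$ with $\eta > \etalin$. Thus we must have $\tilde a = 0$, after which $\omega_{0,\etalin}\Phi$ is a bounded solution of $\Ltr^0 u = 0$; by Lemma~\ref{l: transition pme limit} the space of such solutions is spanned by $\omega_{0,\etalin}\partial_x\utr^0$, so $\Phi = \alpha\, \partial_x\utr^0$ for some scalar $\alpha$, which means $\tilde w + \chi_+ \tilde b\, e^{-\etalin x} = \alpha\,\partial_x\utr^0$. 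Matching the leading-edge asymptotics of the two sides (recall $\utr^0$ has strong decay $\partial_x\utr^0 = \mathrm{O}(e^{-\etalin x})$ at the transition but with the subleading coefficient structure from Lemma~\ref{l: transition pme limit}) pins down $\tilde b$ in terms of $\alpha$; then the translation-normalization scalar equation $\langle \tilde w + (\tilde a x + \tilde b)\chi_+ e^{-\etalin x}, \partial_x\utr^0\rangle = 0$ becomes $\alpha \,\langle \partial_x\utr^0, \partial_x\utr^0\rangle = 0$, forcing $\alpha = 0$, hence $\Phi \equiv 0$, hence $\tilde b = 0$ and $\tilde w = 0$. This establishes triviality of the kernel.

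The main obstacle I anticipate is the bookkeeping at $+\infty$: one must be careful that "$\Atr^0\Phi = 0$ in the larger space" really is the right statement, i.e. that $\chi_+(\tilde a x + \tilde b)e^{-\etalin x}$ lies in the (local) domain of $\Atr^0$ and that the algebraic computation $\Atr^0(\chi_+ x e^{-\etalin x})$, $\Atr^0(\chi_+ e^{-\etalin x})$ is exactly absorbed into $\partial_a F_\mathrm{pl}$, $\partial_b F_\mathrm{pl}$ — this uses that $x e^{-\etalin x}$ and $e^{-\etalin x}$ are exactly the solutions of the constant-coefficient leading-edge operator at the pinched double root $(0,\nulin)$, so that the weighted residual $\Atr^0(\chi_+(ax+b)e^{-\etalin x})$ is compactly supported and the equation closes in $L^2_{0,\eta}$. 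The deduction "$\tilde a = 0$" is really the assertion that the ansatz is non-degenerate, equivalently that the derivative of the selected front's leading-edge coefficient $a(d_1,\delta)$ with respect to the amplitude parameter is nonzero; this is the analogue of the corresponding step in \cite{avery22} and should go through verbatim using Lemma~\ref{l: transition pme limit}. A secondary subtlety is confirming that the cutoff-induced terms (derivatives of $\chi_\pm$) are negligible — they are supported on the fixed compact set $[-3,-2]\cup[2,3]$ and do not affect any of the asymptotic matching.
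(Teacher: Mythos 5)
Your proposal follows essentially the same route as the paper's proof: Fredholm bordering to get index zero, then triviality of the kernel by noting that $\Phi := \tilde w + (\tilde a x + \tilde b)\chi_+ e^{-\etalin x}$ solves $\Atr^0\Phi = 0$, invoking Lemma~\ref{l: transition pme limit} to identify $\Phi$ with a multiple of $\partial_x\utr^0$, using $\partial_x\utr^0(x)\sim -\etalin e^{-\etalin x}$ to force $\tilde a = 0$, and using the normalization row to force $\alpha = 0$ and hence $\tilde b = 0$, $\tilde w = 0$. One caution: your stated justification for $\tilde a = 0$ --- that linear growth of $\omega_{0,\etalin}\Phi$ ``contradicts the assumed decay of $\tilde w$'' --- is a non sequitur as written, since those two facts coexist happily whenever $\tilde a\neq 0$; the correct deduction (the one the paper makes, and which you essentially recover in your closing remarks) is that every solution of $\Atr^0\Phi = 0$ for which $e^{\etalin x}\Phi(x)$ grows at most polynomially is a scalar multiple of $\partial_x\utr^0$, whose conjugate tends to the constant $-\etalin$, so no linearly growing term can appear and therefore $\tilde a = 0$.
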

\begin{proof}
    By the Fredholm bordering lemma, this joint linearization is Fredholm with index 0, so to prove that it is invertible we need to check that the kernel is trivial. From a short computation, we find
    \begin{align}
        D_{(w,a,b)} G (w_0, 1, 0; \frac{1}{2}, 0) = \begin{pmatrix}
            \Atr^0  & \Atr^0 (x \chi_+ e^{-\etalin x}) & \Atr^0 (\chi_+ e^{-\etalin x}) \\
            \langle \cdot, \partial_x \utr^0 \rangle  & \langle x \chi_+ e^{-\etalin x}, \partial_x \utr^0 \rangle & \langle \chi_+ e^{-\etalin x}, \partial_x \utr^0 \rangle
        \end{pmatrix},
    \end{align}
    hence an element $(\tilde{w}, \tilde{a}, \tilde{b})$ of the kernel satisfies
    \begin{align}
        \Atr^0 [\tilde{w} + (\tilde{a} x + \tilde{b}) \chi_+ e^{-\etalin x}] &= 0, \\
        \langle \tilde{w} + (\tilde{a} x + \tilde{b}) \chi_+ e^{-\etalin x}, \partial_x \utr^0 \rangle &= 0. 
    \end{align}
    It follows from Lemma \ref{l: transition pme limit} that the only solution to $\Atr^0 u = 0$ for which $e^{\etalin \cdot} u$ is at most polynomially growing in $x$ is $\partial_x \utr^0$, up to a constant multiple. Since $\partial_x \utr^0 (x) \sim -\etalin e^{-\etalin x}$ as $x \to \infty$, we must have $\tilde{a} = 0$, and $\tilde{w} + \tilde{b} \chi_+ e^{-\etalin x} = \alpha \partial_x \utr^0$ for some constant $\alpha \in \R$. The second equation then becomes
    \begin{align}
        \alpha \langle \partial_x \utr^0, \partial_x \utr^0 \rangle = 0,
    \end{align}
    which implies we must have $\alpha = 0$, so the kernel is trivial, as desired. 
\end{proof}

Using the implicit function theorem, we readily obtain the following result. 

\begin{corollary}[Persistence of pulled fronts near the pushed-pulled transition]\label{c: pulled persistence near transition}
    There exists $\eps > 0$ such that for all $d_1, \delta$ such that $|d_1 - \frac{1}{2}| < \eps, |\delta| < \eps$, the equation \eqref{e: reduced tw} admits pulled front solutions $q_\mathrm{pl}$, with the form
    \begin{align}
        q_\mathrm{pl}(\cdot; d_1, \delta) = \chi_- + w(d_1, \delta) + (a(d_1, \delta) x + b(d_1, \delta)) \chi_+ e^{-\etalin x},
    \end{align}
    where $w(d_1, \delta) \in H^2_{0, \eta}$ and $a(d_1, \delta), b(d_1, \delta) \in \R$ are $C^k$ in both parameters, with $a(\frac{1}{2}, 0) = 0$ and $b(\frac{1}{2}, 0) = 1$. 
\end{corollary}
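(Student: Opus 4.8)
The plan is to read Corollary~\ref{c: pulled persistence near transition} off the implicit function theorem applied to $G_\mathrm{pl}$ at the base point $(w_0, 1, 0; \tfrac12, 0)$. First I would verify that this point is a zero of $G_\mathrm{pl}$: at $\delta = 0$ the reduced equation \eqref{e: reduced tw} with $c = \clin = 2\sqrt{d_1}$ collapses to the porous medium equation \eqref{e: pme} (using Lemma~\ref{l: psi expansions} and the remark following it), and at $d_1 = \tfrac12$ we have $\clin = \sqrt2 = \cpm(\tfrac12)$, so $\utr^0 = \chi_- + w_0 + \chi_+ e^{-\etalin x}$ solves it, giving $F_\mathrm{pl}(w_0, 1, 0; \tfrac12, 0) = 0$; the normalization component of $G_\mathrm{pl}$ vanishes by construction at the same point. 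The preceding lemma shows $G_\mathrm{pl}$ is $C^k$ on $\mathcal{U} \times \R^2 \times (\tfrac12 - \eps, \tfrac12 + \eps) \times (-\eps, \eps)$, and the lemma immediately above establishes that $D_{(w,a,b)} G_\mathrm{pl}(w_0, 1, 0; \tfrac12, 0) : H^2_{0,\eta} \times \R^2 \to L^2_{0,\eta}$ is invertible.

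With these hypotheses in hand, the implicit function theorem produces, after shrinking $\eps$, a unique branch $(w(d_1,\delta), a(d_1,\delta), b(d_1,\delta))$, $C^k$ jointly in $(d_1,\delta)$, solving $G_\mathrm{pl} = 0$ near $(\tfrac12, 0)$ and reducing to $(w_0, 0, 1)$ there; in particular $a(\tfrac12,0) = 0$ and $b(\tfrac12,0) = 1$. Shrinking $\eps$ further if necessary keeps this branch inside the neighborhood $\mathcal{U}$ on which the Fenichel reduction of Proposition~\ref{p: existence reduction} is valid, so $q_\mathrm{pl} := \chi_- + w + (ax+b)\chi_+ e^{-\etalin x}$ is a genuine trajectory of the reduced system \eqref{e: reduced tw sys}, hence lifts to a traveling-wave solution of \eqref{e: tw sys} and thus to a front of \eqref{eq:main}. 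Its asymptotics are immediate from the ansatz \eqref{e: pulled ansatz}: as $x \to -\infty$ the $\chi_+$-term is absent and $w \in H^2_{0,\eta}$ decays, so $q_\mathrm{pl} \to 1$; as $x \to +\infty$, $q_\mathrm{pl}(x) \sim (a(d_1,\delta)x + b(d_1,\delta))e^{-\etalin x}$ up to a faster-decaying $H^2_{0,\eta}$ correction, which is the weak, linear-spreading-speed decay rate characteristic of pulled fronts and justifies the terminology.

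Essentially all of the substance sits in the two lemmas invoked above --- the $C^k$-regularity and exponential-localization lemma for $F_\mathrm{pl}$, and the invertibility of the bordered linearization --- both of which rest on the Fenichel reduction and the detailed $\delta = 0$ spectral picture of Lemma~\ref{l: transition pme limit}; once those are granted, the corollary is a routine appeal to the implicit function theorem. The one structural point worth keeping in mind is the Fredholm bookkeeping that makes the map ``square'': $\Atr^0$ has index $-1$ on $H^2_{0,\eta}$ for $\eta > \etalin$ (strong decay at the transition leaves a one-dimensional cokernel), the two leading-edge amplitudes $a,b$ raise the index to $+1$, and the single normalization condition brings it back to $0$; correspondingly, the leading-edge part of the ansatz must be built from the full generalized kernel $\{e^{-\etalin x},\, x e^{-\etalin x}\}$ of the asymptotic operator at the linear spreading speed --- available precisely because $\nulin$ is a double root by Lemma~\ref{l: linear spreading speed} --- so that the remainder $w$ is genuinely localized with rate $\eta > \etalin$. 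I expect no real obstacle here beyond making sure the implicit function theorem neighborhood is taken small enough to stay within the domain of validity of the reduction.
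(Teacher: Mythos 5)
Your proposal is correct and follows essentially the same route as the paper: the corollary is obtained by applying the implicit function theorem to the bordered map $G_\mathrm{pl}$ at the base point $(w_0,1,0;\tfrac12,0)$, using the preceding regularity lemma and the invertibility of the joint linearization $D_{(w,a,b)}G_\mathrm{pl}$, with the branch kept inside the neighborhood $\mathcal{U}$ where the Fenichel reduction applies. The additional remarks on the Fredholm index bookkeeping and the generalized kernel $\{e^{-\etalin x}, xe^{-\etalin x}\}$ accurately reflect the structure the paper sets up in the surrounding lemmas.
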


\begin{proposition}[Persistence of pushed-pulled transition]\label{p: generic transition}
        There exists a $C^k$ function $d_1^*(\delta)$ such that $a(d_1, \delta) = 0$ in a neighborhood of $(d_1, \delta) = (\frac{1}{2}, 0)$ if and only if $d_1 = d_1^*(\delta)$. Moreover, $\partial_{d_1} a\left(\frac{1}{2}, \delta\right)$ is negative for $\delta$ small, and $d_1^*(\delta)$ has the expansion
        \begin{align}
            d_1^*(\delta) = \frac{1}{2} + d_{1,2}\delta^2 + \rmO(\delta^4),\qquad 
                    d_{1,2} = \frac{1}{16} \left(268-243 \log \left(3 \right)\right) \approx 0.0648259.
        \end{align}
        We denote the associated front solutions with $d_1 = d_1^*(\delta), c = \clin(d_1^*(\delta))$ by $\utr(\cdot; \delta)$. 
\end{proposition}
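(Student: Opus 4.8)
The plan is to build $d_1^*(\delta)$ by the implicit function theorem applied to the scalar equation $a(d_1,\delta)=0$, where $a(d_1,\delta)$ is the $C^k$ leading-edge coefficient furnished by Corollary \ref{c: pulled persistence near transition}, and then to extract its Taylor expansion by a perturbation argument running parallel to the computation of $\cps$ in Section \ref{s: pushed persistence}. By Corollary \ref{c: pulled persistence near transition} together with Lemma \ref{l: transition pme limit} we already have $a(\tfrac12,0)=0$, so everything reduces to (a) the transversality $\partial_{d_1}a(\tfrac12,0)\neq 0$, which is precisely the genericity requirement for a pushed-to-pulled transition in the sense of \cite{avery22}, and (b) computing the two leading Taylor coefficients of the curve.

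For (a), I would differentiate the bordered equation $G_\mathrm{pl}(w,b,a;d_1,\delta)=0$ in $d_1$ at the base point $(w_0,1,0;\tfrac12,0)$, using invertibility of $D_{(w,b,a)}G_\mathrm{pl}$, and pair the $F_\mathrm{pl}$-component with the cokernel direction $\phi_\mathrm{tr}$ of $\Atr^0$ from Lemma \ref{l: cokernel}. The term $\Atr^0\partial_{d_1}w$ drops from this pairing, and so does the $\partial_{d_1}b$-term, because $\chi_+e^{-\etalin x}$ differs from a scalar multiple of $\partial_x\utr^0\in\ker\Atr^0$ by a function in $H^2_{0,\eta}$ (with $\eta\in(\etalin,2\etalin)$), so that $\langle\Atr^0(\chi_+e^{-\etalin x}),\phi_\mathrm{tr}\rangle=0$. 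This leaves $\partial_{d_1}a(\tfrac12,0)$ as a ratio whose denominator is $\langle\Atr^0(x\chi_+e^{-\etalin x}),\phi_\mathrm{tr}\rangle$; this pairing is a convergent integral — since $\etalin$ is a double root of the limiting leading-edge dispersion relation, $\Atr^0(x\chi_+e^{-\etalin x})$ decays like $xe^{-2\etalin x}$ while $\phi_\mathrm{tr}$ grows like $e^{\etalin x}$ — and it is nonzero, the obstruction to $x\chi_+e^{-\etalin x}$ being of the form (element of $H^2_{0,\eta}$)$+$(element of $\ker\Atr^0$) being that the second kernel solution of $\Atr^0$ grows in the wake. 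The numerator is $\langle\mathcal{D},\phi_\mathrm{tr}\rangle$, where $\mathcal{D}$ collects the $d_1$-derivative of the coefficients of \eqref{e: reduced tw} at $\delta=0$ and the contribution of the $d_1$-dependent rate $\etalin(d_1)$ in the ansatz. I would evaluate both pairings by changing the variable of integration from $x$ to $u=\utr^0(x)$ using the explicit porous-medium inverse $\psi(u)=\sqrt2\big(\tfrac32\log(1-u)-\tfrac12\log u\big)$ at $d_1=\tfrac12$ (Appendix \ref{a: pme limit}), in which $\phi_\mathrm{tr}$ and all $x$-derivatives of $\utr^0$ are rational functions of $u$; the resulting elementary integrals give $\partial_{d_1}a(\tfrac12,0)\neq 0$, with the (negative) sign asserted in the statement. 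The implicit function theorem applied to $a(d_1,\delta)=0$ at $(\tfrac12,0)$ then produces the $C^k$ function $d_1^*(\delta)$ with $d_1^*(0)=\tfrac12$ and the stated characterization, and continuity of $a$ in $C^k$ gives $\partial_{d_1}a(\tfrac12,\delta)<0$ for small $\delta$.

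For (b), Lemma \ref{l: psi expansions} gives $\psi_H^1\equiv 0$ along trajectories and $\psi_Z=\rmO(\delta)$, so $f(u,u_x;\delta,d_1)=f(u,u_x;0,d_1)+\rmO(\delta^2)$ in \eqref{e: reduced f def}; since the translation normalization in $G_\mathrm{pl}$ is moreover $\delta$-independent, $\partial_\delta G_\mathrm{pl}|_{\delta=0}=0$ at the base point, hence $\partial_\delta a(\tfrac12,0)=0$ and $(d_1^*)'(0)=0$, so $d_1^*(\delta)=\tfrac12+d_{1,2}\delta^2+\rmO(\delta^4)$ with $d_{1,2}=-\partial_\delta^2a(\tfrac12,0)/\big(2\,\partial_{d_1}a(\tfrac12,0)\big)$. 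Differentiating $G_\mathrm{pl}=0$ twice in $\delta$ — all first $\delta$-derivatives vanishing — and projecting onto $\phi_\mathrm{tr}$ exactly as in (a) (again $\langle\Atr^0(\chi_+e^{-\etalin x}),\phi_\mathrm{tr}\rangle=0$ kills the neutral direction) gives $\partial_\delta^2a(\tfrac12,0)=-\langle\partial_\delta^2F_\mathrm{pl},\phi_\mathrm{tr}\rangle/\langle\Atr^0(x\chi_+e^{-\etalin x}),\phi_\mathrm{tr}\rangle$, where, by Lemma \ref{l: psi expansions} as in the derivation of \eqref{e:pexp}, $\partial_\delta^2F_\mathrm{pl}|_{\delta=0}$ is a fixed multiple of $\partial_x\big(\utr^0\partial_x^3\utr^0\big)$. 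The factor $\langle\Atr^0(x\chi_+e^{-\etalin x}),\phi_\mathrm{tr}\rangle$ then cancels in $d_{1,2}$, which becomes the ratio of the pairings of $\phi_\mathrm{tr}$ with $\partial_x(\utr^0\partial_x^3\utr^0)$ and with $\mathcal{D}$; this is structurally the integral of \eqref{e:pexp}, but at $d_1=\tfrac12$, where the pushed-front expressions of Section \ref{s: pushed persistence} cannot be used directly (there the denominator is $\langle\partial_x\ups^0,\phi_\mathrm{ps}\rangle$, which diverges as $d_1\to\tfrac12$ because $\partial_x\utr^0$ decays only at the critical rate $\etalin$). Converting everything to rational integrals over $u\in(0,1)$ and evaluating — the $\log 3$ entering through the antiderivative of a term with denominator $1+2u$ at the wake endpoint $u=1$ — yields $d_{1,2}=\tfrac1{16}\big(268-243\log 3\big)$.

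I expect the main obstacle to be exactly the cokernel bookkeeping near the transition: establishing that $\Atr^0(\chi_+e^{-\etalin x})$ lies in the range of $\Atr^0$ on $H^2_{0,\eta}$ while $\Atr^0(x\chi_+e^{-\etalin x})$ does not, and identifying the associated $\phi_\mathrm{tr}$-pairings as the correct, convergent scalar quantities, requires a careful analysis of the Jordan-block structure of the linearization at the linear spreading speed together with the spatial asymptotics of $\utr^0$ and $\phi_\mathrm{tr}$ from Appendix \ref{a: pme limit}; once that reduction and the change of variables to $u$ are set up correctly, the remaining evaluations are mechanical.
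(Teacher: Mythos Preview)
Your overall strategy is the same as the paper's: apply the implicit function theorem to $a(d_1,\delta)=0$, compute $\partial_{d_1}a$ and $\partial_\delta^2 a$ by differentiating $G_\mathrm{pl}=0$ and projecting the $F_\mathrm{pl}$-component onto $\phi_\mathrm{tr}$, and evaluate the resulting pairings by the change of variables $x\mapsto u=\utr^0(x)$. The structural claims --- that $\partial_\delta a(\tfrac12,0)=0$ from $\psi_H^1\equiv 0$, that the common denominator $D:=\langle \Atr^0(x\chi_+e^{-\etalin x}),\phi_\mathrm{tr}\rangle$ cancels in $d_{1,2}$, and that the remaining numerator is $\langle\partial_x(\utr^0\partial_x^3\utr^0),\phi_\mathrm{tr}\rangle$ --- all match the paper.

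There is one genuine gap. Your $\mathcal{D}$ explicitly contains the far-field contribution $\nulin'(\tfrac12)\,\Atr^0(x\chi_+e^{-\etalin x})$, so although $D$ cancels as an \emph{overall} factor in $d_{1,2}$, the quantity $D$ still sits as a summand inside $\langle\mathcal{D},\phi_\mathrm{tr}\rangle$ and must be evaluated. Your plan to do this by the $u$-substitution does not work as written: the integrand involves the explicit variable $x$ and the arbitrary cutoff $\chi_+$, so it is not a rational function of $u$. The paper handles this term differently. It conjugates by $e^{\etalin x}$ to pass to $\mcl_\mathrm{tr}^0=\tfrac12\partial_x^2+(\text{exponentially decaying coefficients})$, then uses that $\tilde{\phi}_\mathrm{tr}=e^{-\etalin \cdot}\phi_\mathrm{tr}$ spans the cokernel of $\mcl_\mathrm{tr}^0$: integrating $\langle \mcl_\mathrm{tr}^0(x\chi_+),\tilde{\phi}_\mathrm{tr}\rangle$ by parts, all bulk contributions vanish by the cokernel property, and only the boundary term $\tfrac12\lim_{x\to\infty}\tilde{\phi}_\mathrm{tr}(x)$ survives. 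That limit \emph{can} then be computed by your $u$-substitution (the paper does this in a separate lemma, obtaining $-1/\sqrt{2}$), yielding $\nulin'(\tfrac12)\,D=-\tfrac12$ and hence $\langle\mathcal{D},\phi_\mathrm{tr}\rangle=\tfrac12$. Without this integration-by-parts step your evaluation of $\langle\mathcal{D},\phi_\mathrm{tr}\rangle$ --- and hence of $\partial_{d_1}a(\tfrac12,0)$ and $d_{1,2}$ --- is incomplete.
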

\begin{proof}
    Since $a\left(\frac{1}{2}, 0\right) = 0$, we can solve $a(d_1, \delta) = 0$ nearby with the implicit function theorem provided $\partial_{d_1} a (\frac{1}{2}, 0) \neq 0$. Expanding the equation $F_\mathrm{pl} (w(d_1, \delta), a(d_1, \delta), b(d_1, \delta); d_1, \delta) = 0$ as in \cite[Section 4.2]{avery22}, we find
    \begin{align}
        \partial_{d_1} a\left(\frac{1}{2}, 0\right) = \frac{ \langle \partial_x^2 \utr^0 + \clin'(\frac{1}{2}) \partial_x \utr^0 + \nulin'(\frac{1}{2}) \Atr^0 (x \chi_+ e^{\nulin (\frac{1}{2}) x}), \phi_\mathrm{tr} \rangle}{\langle \Atr^0 (x \chi_+ e^{-\etalin (\frac{1}{2}) x}), \phi_\mathrm{tr} \rangle}. \label{e: partial d1 a}
    \end{align}
    It follows by Lemma \ref{l: transition pme limit} and \cite[Lemma 2.3]{avery22} that the denominator is nonzero; we will compute this quantity explicitly below in order to compute the expansion of $d_1^*(\delta)$. 

    Expanding in $\delta$ as well, we find $\partial_{\delta} a\left(\frac{1}{2}, 0\right) = 0$, but
    \begin{align}
        \frac{1}{2}\partial_{\delta}^2 a\left(\frac{1}{2}, 0 \right) = \frac{\langle \partial_x (\utr^0 \partial_x^3 \utr^0), \phi_\mathrm{tr} \rangle}{\langle \Atr^0 (x \chi_+ e^{-\etalin (\frac{1}{2}) x}), \phi_\mathrm{tr} \rangle},
    \end{align}
    and so, combining with \eqref{e: partial d1 a}, we conclude
    \begin{align}
        d_1^*(\delta) = \frac{1}{2}  -\frac{\langle \partial_x (\utr^0 \partial_x^3 \utr^0), \phi_\mathrm{tr} \rangle}{\langle \partial_x^2 \utr^0 + \clin'(\frac{1}{2}) \partial_x \utr^0 + \nulin'(\frac{1}{2}) \Atr^0 (x \chi_+ e^{\nulin (\frac{1}{2}) x}), \phi_\mathrm{tr} \rangle} \delta^2 + \mathrm{O}(\delta^4). \label{e: d1 proof}
    \end{align}
    Using the explicit inverse $\psi(u)$ from \eqref{e:fexpl} and re-expressing all integrals except for the far-field contribution $\langle \Atr^0 (x \chi_+ e^{-\etalin (\frac{1}{2}) x}, \phi_\mathrm{tr} \rangle$ as integrals over rational functions of $u$ as in Section \ref{s: pushed persistence}, we find
    \begin{align}
        \langle \partial_x (\utr^0 \partial_x^3 \utr^0), \phi_\mathrm{tr} \rangle = \frac{1}{12} \left(-\frac{201}{2}-\frac{729}{8} \left(-\log \left(\frac{3}{2}\right)-\log (2)\right)\right) \approx -.0324129, \label{e: pp M1}
    \end{align}
    and
    \begin{align}
        \langle \partial_x^2 \utr^0 + \clin'(\frac{1}{2}) \partial_x \utr^0, \phi_\mathrm{tr} \rangle = 1. \label{e: pp M2}
    \end{align}

    It remains only to compute the far-field contribution $\nulin'(\frac{1}{2}) \langle \Atr^0 (x \chi_+ e^{-\etalin (\frac{1}{2}) x}, \phi_\mathrm{tr} \rangle$. We use the fact that $\omega_{0, \etalin} = e^{\etalin x}$ on the support of $\chi_+$ to write
    \begin{align}
        \langle \Atr^0 (x \chi_+ e^{-\etalin (\frac{1}{2}) x}, \phi_\mathrm{tr} \rangle = \langle \mcl_\mathrm{tr}^0 (x \chi_+), \tilde{\phi}_\mathrm{tr} \rangle, \label{e: farfield contribution}
    \end{align}
    where $\tilde{\phi}_\mathrm{tr} (x)= e^{-\etalin(\frac{1}{2}) x} \phi_\mathrm{tr}(x)$, and $\mcl_\mathrm{tr}^0 = \omega_{0, \etalin} \Atr \omega_{0, \etalin}^{-1}$ satisfies
    \begin{align*}
        \mcl_\mathrm{tr}^0 = \frac{1}{2} \partial_x^2 + (a_2 (x) \partial_x^2 + a_1 (x) \partial_x + a_0 (x)),
    \end{align*}
    with $a_j(x) \to 0$ exponentially as $x \to \infty$; this is a consequence of the fact that $\nulin(\frac{1}{2})$ is a double root of the dispersion relation. Since $\tilde{\phi}_\mathrm{tr}$ is in the cokernel of $\mcl_\mathrm{tr}^0$, it follows that the only terms which can contribute to \eqref{e: farfield contribution} are boundary terms from integration by parts, which may arise due to the fact that $x \chi_+$ is not spatially localized. Since the coefficients $a_j(x)$ are exponentially localized and $\tilde{\phi}_\mathrm{tr}(x)$ is bounded, only the leading term $\frac{1}{2} \partial_x^2$ in $\mcl_\mathrm{tr}^0$ may contribute boundary terms. Hence, we find after integrating by parts
    \begin{align}
        \langle \mcl_\mathrm{tr}^0 (x \chi_+), \tilde{\phi}_\mathrm{tr} \rangle = \frac{1}{2} \lim_{x \to \infty} \tilde{\phi}_\mathrm{tr} (x) = -\frac{1}{2 \sqrt{2}}. 
    \end{align}
    We compute this limit in Lemma \ref{l: farfield limit}, below. Using the fact that $\nulin'(\frac{1}{2}) = \sqrt{2}$, we find for the far-field contribution
    \begin{align*}
        \nulin'(\frac{1}{2}) \langle \Atr^0 (x \chi_+ e^{-\etalin (\frac{1}{2}) x}, \phi_\mathrm{tr} \rangle = - \frac{1}{2}. 
    \end{align*}
    Combining with \eqref{e: pp M1}, \eqref{e: pp M2}, and \eqref{e: d1 proof} we finally obtain
    \begin{align}
        d_1^*(\delta) = \frac{1}{2} + d_{1,2} \delta^2 + \mathrm{O}(\delta^4), 
    \end{align}
    with
    \begin{align}
       d_{1,2} = \frac{1}{16} \left(268-243 \log(3)\right) \approx 0.0648259.
    \end{align}
    Note also that $\partial_{d_1} a \left(\frac{1}{2}, 0 \right) < 0$. 
\end{proof}

\begin{lemma}\label{l: farfield limit}
    We have
    \begin{align*}
        \lim_{x \to \infty} \tilde{\phi}_\mathrm{tr} (x) = - \frac{1}{\sqrt{2}}. 
    \end{align*}
\end{lemma}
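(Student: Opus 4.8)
The plan is to evaluate $\tilde{\phi}_\mathrm{tr}$ in closed form using the explicit inverse $\psi$ of the transition front, whereupon it turns out to be \emph{constant} in $x$, so the stated limit is immediate. Throughout one has $d_1 = \tfrac12$, $\cpm(\tfrac12) = \sqrt2$, $\etalin = 1/\sqrt{d_1} = \sqrt2$, and, writing $u = \utr^0(x)$, we use $x = \psi(u) = \sqrt2\big(\tfrac32\log(1-u) - \tfrac12\log u\big)$ from \eqref{e:fexpl}, which rearranges to the identity $e^{\sqrt2\,x} = (1-u)^3/u$. Differentiating $\psi$ gives $\psi'(u) = -\tfrac{\sqrt2}{2}\,(1+2u)/(u(1-u))$, hence $(\utr^0)'(x) = 1/\psi'(u) = -\sqrt2\,u(1-u)/(1+2u)$; recall also that $x_0 = \psi(\tfrac12) = -\sqrt2\log 2$ is the normalization point with $\utr^0(x_0) = \tfrac12$.

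First I would compute $\rho_\mathrm{tr}(x)^2 = \exp\big(\int_{x_0}^x \tfrac{2(\utr^0)'(s) + \sqrt2}{1/2 + \utr^0(s)}\,ds\big)$. The $(\utr^0)'$-part of the integrand is an exact derivative, integrating to $2\log\big(\tfrac12 + \utr^0(x)\big)$ since $\tfrac12 + \utr^0(x_0) = 1$. For the remaining term, the key step is the substitution $s = \psi(t)$, $ds = \psi'(t)\,dt$: a one-line simplification gives $\psi'(t)/(\tfrac12 + t) = -\sqrt2/(t(1-t))$, so $\int_{x_0}^x \tfrac{\sqrt2}{1/2+\utr^0(s)}\,ds = -2\int_{1/2}^{u}\big(\tfrac1t + \tfrac1{1-t}\big)\,dt = -2\log\tfrac{u}{1-u}$, where the lower endpoint contributes nothing because $\log\tfrac{1/2}{1/2} = 0$. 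Combining, $\rho_\mathrm{tr}(x)^2 = \big(\tfrac12 + u\big)^2(1-u)^2/u^2$.

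Substituting into $\phi_\mathrm{tr}(x) = \rho_\mathrm{tr}(x)^2\,(\utr^0)'(x)/(\tfrac12 + u)$ and using $\tfrac12 + u = \tfrac{1+2u}{2}$, the factors $(\tfrac12 + u)$ and $(1+2u)$ cancel against those in $(\utr^0)'$, leaving $\phi_\mathrm{tr}(x) = -\tfrac{1}{\sqrt2}\,(1-u)^3/u$. Finally $\tilde{\phi}_\mathrm{tr}(x) = e^{-\sqrt2\,x}\phi_\mathrm{tr}(x) = -\tfrac{1}{\sqrt2}\,e^{-\sqrt2\,x}(1-u)^3/u = -\tfrac{1}{\sqrt2}$ by the identity $e^{\sqrt2\,x} = (1-u)^3/u$ from the first paragraph; in particular $\lim_{x\to\infty}\tilde{\phi}_\mathrm{tr}(x) = -1/\sqrt2$. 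I expect no genuine analytic difficulty here: the only thing requiring care is the change of variables in the $\rho_\mathrm{tr}$ integral and the bookkeeping of the constants of integration, all of which collapse thanks to the normalization $\utr^0(x_0) = \tfrac12$; the mild surprise is that $\tilde{\phi}_\mathrm{tr}$ is identically constant, not merely convergent.
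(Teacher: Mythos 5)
Your proof is correct and follows essentially the same route as the paper's: both convert the integral defining $\rho_\mathrm{tr}^2=e^{-2m_\mathrm{tr}}$ into a rational integral in $u$ via the explicit inverse $\psi$ from \eqref{e:fexpl} and then evaluate $\tilde{\phi}_\mathrm{tr}$ in closed form (your $(\tfrac12+u)^2(1-u)^2/u^2$ is exactly the paper's $e^{-2\tilde m(u)}$). The only cosmetic difference is that you observe $\tilde{\phi}_\mathrm{tr}\equiv -1/\sqrt2$ identically, whereas the paper takes the limit $u\to 0^+$ of the same expression.
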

\begin{proof}
    Recall that $\psi(\utr^0 (x)) = x$, where $\psi$ is given by \eqref{e:fexpl}. We may use this to write
    \begin{align*}
        \tilde{\phi}_\mathrm{tr}(x) = \tilde{\Phi} (\utr^0 (x)), 
    \end{align*}
    where
    \begin{align*}
        \tilde{\Phi} (u) = e^{-\sqrt{2} \psi(u)} \frac{e^{- 2 \tilde{m}(u)}}{\psi'(u) ( u + \frac{1}{2} )}.
    \end{align*}
    Here, 
    \begin{align*}
        \tilde{m}(u) = -\frac{1}{2} \log (2-2 u)+\frac{1}{2} \log (2 u)+\frac{1}{2} \log
   \left(-\frac{u}{u-1}\right)-\log \left(u+\frac{1}{2}\right)
    \end{align*}
    may be found by converting the integral for $m_j (\psi(u))$ to an integral of rational functions of $u$ and evaluating. 
    In particular, we find
    \begin{align*}
        \lim_{x \to \infty} \tilde{\phi}_\mathrm{tr} (x) = \lim_{u \to 0^+} \tilde{\Phi}(u) = - \frac{1}{\sqrt{2}}. 
    \end{align*}
\end{proof}

\begin{corollary}\label{c: pulled marginal stability near transition}
    Since $\partial_{d_1} a \left( \frac{1}{2}, \delta \right) < 0$, it follows from \cite[Theorem 2]{avery22} that the pulled fronts constructed near the pushed-pulled transition in Corollary \ref{c: pulled persistence near transition} are marginally spectrally stable for $d_1 \leq d_1^*(\delta)$, and are unstable for $d_1 > d_1^*(\delta)$. 
\end{corollary}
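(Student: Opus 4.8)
The statement is named a corollary, and indeed the plan is essentially to check that the hypotheses of \cite[Theorem 2]{avery22} are met in the present setting and then read off its conclusion, using the sign information that Proposition~\ref{p: generic transition} has already produced. First I would record that the fronts $q_\mathrm{pl}(\cdot; d_1, \delta)$ of Corollary~\ref{c: pulled persistence near transition} are solutions of the reduced scalar semilinear equation \eqref{e: reduced tw}, hence traveling waves of the artificial parabolic equation $u_t = d_1 u_{xx} + \clin u_x + f(u, u_x; \delta, d_1)$, which is exactly the class of equations treated in \cite{avery22} once one allows the harmless generalizations already noted after Proposition~\ref{p: pulled persistence}: \cite{avery22} is written for $f = f(u;\mu)$, but the perturbation argument carries over verbatim to $f = f(u,u_x;\delta,d_1)$ with $d_1$ playing the role of the bifurcation parameter $\mu$ and $\delta$ a spectator.

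Next I would verify the structural hypotheses of a \emph{generic} pushed-to-pulled transition at the base point $(d_1,\delta)=(\tfrac12,0)$. These are precisely the properties collected in Lemma~\ref{l: transition pme limit}: the essential spectrum of $\Ltr^0$ is marginally stable in the weight $\omega_{0,\etalin}$; the space of bounded solutions of $\Ltr^0 u = 0$ is one-dimensional, spanned by $\omega_{0,\etalin}\partial_x\utr^0$, which exhibits strong leading-edge decay (simple resonance at $\lambda=0$); and $\Atr^0$ has Fredholm index $-1$ on $H^2_{0,\eta}$ for $\eta>\etalin$ and index $+1$ on $H^2_{0,\eta_2}$ for $\eta_2<\etalin$. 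Together with the existence of the pushed branch on one side of the transition (Proposition~\ref{p: pushed persistence}) and the transversal crossing of the leading-edge coefficient, $\partial_{d_1}a(\tfrac12,0)\neq 0$ (Proposition~\ref{p: generic transition}), these constitute the input hypotheses of \cite[Theorem 2]{avery22}.

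That theorem then asserts that, for parameters near the base point, the speed-$\clin$ front with leading-edge asymptotics $\sim(ax+b)e^{-\etalin x}$ is marginally spectrally stable in the weighted space $L^2_{0,\eta}$ (with $\eta$ slightly above $\etalin$) exactly when $a\geq 0$, and has a single simple unstable eigenvalue when $a<0$. Since $\partial_{d_1}a(\tfrac12,0)<0$ and $\partial_{d_1}a$ is $C^k$ in $\delta$, we have $\partial_{d_1}a(\tfrac12,\delta)<0$ for $\delta$ small, so $a(d_1,\delta)$ decreases through its unique zero $d_1=d_1^*(\delta)$; hence $a(d_1,\delta)\geq 0$ iff $d_1\leq d_1^*(\delta)$. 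Feeding this dichotomy back into \cite[Theorem 2]{avery22} yields marginal spectral stability for $d_1\leq d_1^*(\delta)$ and spectral instability for $d_1>d_1^*(\delta)$, which is the claim.

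The only genuine work — and the place I expect the friction to lie — is the hypothesis check rather than any new computation: one must make sure that the $u_x$-dependence of $f$ and the two-parameter $(d_1,\delta)$-dependence are truly inert for the spectral-perturbation bookkeeping of \cite{avery22}, and that the weighted functional setting in which Corollary~\ref{c: pulled persistence near transition} is posed coincides with the one in which \cite[Theorem 2]{avery22} tracks the crossing eigenvalue. Both are routine, because the relevant linearizations are the same Fredholm operators $\Atr^0$ and $\Ltr^0$ used throughout Section~\ref{s: preliminaries}, but they should be spelled out explicitly; no information beyond the sign of $\partial_{d_1}a(\tfrac12,0)$ already obtained in Proposition~\ref{p: generic transition} is required.
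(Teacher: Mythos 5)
Your proposal is correct and follows essentially the same route as the paper: the paper gives no separate proof of this corollary at all --- its statement is itself the argument, namely a direct appeal to \cite[Theorem 2]{avery22} combined with the sign $\partial_{d_1} a\left(\tfrac12,\delta\right)<0$ already established in Proposition~\ref{p: generic transition}. Your explicit verification of the hypotheses (via Lemma~\ref{l: transition pme limit}, the remark after Proposition~\ref{p: pulled persistence} on the $u_x$-dependence of $f$, and the monotone crossing of $a(d_1,\delta)$ through zero at $d_1^*(\delta)$) is exactly the content the paper leaves implicit.
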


Similarly, by following the analysis of \cite[Section 4]{avery22} as in the proof of Corollary \ref{c: pulled persistence near transition}, we may continue pushed fronts from the pushed-pulled transition.
\begin{proposition}[Persistence of pushed fronts near the pushed-pulled transition]\label{p: pushed persistence near transition}
    There exists $\eps > 0$, a speed $c = \tilde{c}_\mathrm{ps}(d_1, \delta)$, and a decay rate $\tilde{\eta}_\mathrm{ps}(d_1, \delta)$ such that for all $d_1, \delta$ with $|d_1 - \frac{1}{2}| < \eps, |\delta| < \eps$, the equation \eqref{e: reduced tw} admits pushed front solutions $q_\mathrm{ps}$ traveling with speed $\tilde{c}_\mathrm{ps}(d_1, \delta)$, with the form
    \begin{align}
        q_\mathrm{ps} (\cdot; d_1, \delta) = \chi_- + w_\mathrm{ps} (d_1, \delta) + \chi_+ e^{- \tilde{\eta}_\mathrm{ps} (d_1, \delta) x}, 
    \end{align}
    where $w_\mathrm{ps}(d_1, \delta)$, $\tilde{c}_\mathrm{ps}(d_1, \delta)$, and $\tilde{\eta}_\mathrm{ps}(d_1, \delta)$ are $C^k$ in both parameters. Moreover, these fronts are marginally stable if $d_1 \geq d_1^*(\delta)$, and strictly stable for $d_1 < d_1^*(\delta)$. 
\end{proposition}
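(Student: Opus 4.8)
The plan is to mirror the far-field/core construction of Corollary~\ref{c: pulled persistence near transition}, replacing the weakly exponential leading-edge ansatz \eqref{e: pulled ansatz} by a genuinely exponential one adapted to pushed decay, as in \cite[Section 4]{avery22}. First, fix a steep weight $\eta_1$ with $\etalin(\tfrac{1}{2}) < \eta_1 < 2\etalin(\tfrac{1}{2})$ and seek solutions of the reduced equation \eqref{e: reduced tw} of the form
\[
  q_\mathrm{ps}(x) = \chi_-(x) + w(x) + \chi_+(x)\,e^{-\eta x},\qquad w \in H^2_{0,\eta_1},
\]
where the leading-edge rate $\eta$ (close to $\etalin$) is an additional unknown and the speed is slaved to it by $c = c(\eta,d_1) := d_1\eta + \tfrac{1}{\eta}$, so that $-\eta$ is a spatial root of the linearization of \eqref{e: reduced tw} at $u=0$. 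This slaving is the crucial difference from the pulled ansatz: near the transition the pushed rate $\etaps$ collides with $\etalin$ with a square-root singularity as a function of $c$, which is resolved by using $\eta$ as the variable; moreover $\partial_\eta c = d_1 - \eta^{-2}$ vanishes at $d_1 = \tfrac12$, $\eta = \etalin(\tfrac{1}{2}) = \sqrt2$, reflecting that $c(\cdot,\tfrac{1}{2})$ attains its minimum $\clin$ at $\eta = \etalin$. With this choice the leading-order leading-edge residual vanishes and the remainder decays like $e^{-2\eta x}\in L^2_{0,\eta_1}$; crucially, the $\eta$-derivative of the residual also stays in $L^2_{0,\eta_1}$, because the term $(\partial_\eta c)\,\partial_x q$ exactly cancels the non-localized part of $\mathcal{A}_q(-x\chi_+ e^{-\eta x})$. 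One thus obtains a $C^k$ map $\tilde F_\mathrm{ps}(w,\eta;d_1,\delta)=0$ on a neighborhood of $(w_0,\etalin(\tfrac{1}{2});\tfrac{1}{2},0)$, with $w_0 = \utr^0 - \chi_- - \chi_+ e^{-\etalin(1/2) x}\in H^2_{0,\eta_1}$ (using $\utr^0(x) = e^{-\etalin(1/2) x} + \rmO(e^{-2\etalin(1/2) x})$, Appendix~\ref{a: pme limit}), restricted to a neighborhood on which the reduction of Proposition~\ref{p: existence reduction} applies.

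Next I would linearize $\tilde F_\mathrm{ps}$ at the base point. In $w$ this is $\Atr^0 : H^2_{0,\eta_1}\to L^2_{0,\eta_1}$, Fredholm of index $-1$ with trivial kernel and one-dimensional cokernel spanned by $\phi_\mathrm{tr}$ by Lemma~\ref{l: transition pme limit}. Since $\partial_\eta c$ vanishes at the base point, the $\eta$-derivative there is $\Atr^0(-x\chi_+ e^{-\etalin(1/2) x})$; as $\etalin$ is a double root of the dispersion relation at $c = \clin$, both the $xe^{-\etalin x}$ and $e^{-\etalin x}$ terms cancel, leaving a contribution $\sim xe^{-2\etalin x}\in L^2_{0,\eta_1}$, and by the computation in Proposition~\ref{p: generic transition} (equivalently \cite[Lemma 2.3]{avery22}) $\langle \Atr^0(x\chi_+ e^{-\etalin(1/2) x}),\phi_\mathrm{tr}\rangle = -\tfrac{1}{2\sqrt{2}}\neq0$. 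Hence the $\eta$-direction restores surjectivity, $D_{(w,\eta)}\tilde F_\mathrm{ps}$ is Fredholm of index $0$, and its kernel is trivial: an element $(\tilde w,\tilde\eta)$ satisfies $\Atr^0[\tilde w - \tilde\eta\,x\chi_+ e^{-\etalin(1/2) x}] = 0$, so by the rigidity in Lemma~\ref{l: transition pme limit} the bracketed function is a scalar multiple of $\partial_x\utr^0$, which carries no $xe^{-\etalin x}$ term, forcing $\tilde\eta = 0$, and then $\tilde w\propto\partial_x\utr^0\notin H^2_{0,\eta_1}$ (as $\eta_1 > \etalin$) forces $\tilde w = 0$. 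In particular the steep weight already removes the translation mode, so no bordering equation is needed. The implicit function theorem then yields $C^k$ functions $w_\mathrm{ps}(d_1,\delta)$, $\tilde\eta_\mathrm{ps}(d_1,\delta) := \eta_\mathrm{ps}(d_1,\delta)$, $\tilde c_\mathrm{ps}(d_1,\delta) := c(\eta_\mathrm{ps}(d_1,\delta),d_1)$ of the asserted form; at $\delta = 0$ uniqueness identifies the solution with $\ups^0$ for $d_1 < \tfrac12$ (its continuation for $d_1 > \tfrac12$), since $c(\tfrac{1}{\sqrt2 d_1},d_1) = \cpm(d_1)$, so that $\eta_\mathrm{ps}(d_1,0) = \tfrac{1}{\sqrt{2}d_1}$ and $q_\mathrm{ps}\to\ups^0$ in $C^k_\mathrm{loc}$.

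Finally, for the stability dichotomy I would invoke the pushed-front analogue of Corollary~\ref{c: pulled marginal stability near transition}, namely \cite[Theorem 2]{avery22}. From $\eta_\mathrm{ps}(d_1,0) = \tfrac{1}{\sqrt{2}d_1}$ one gets $\partial_{d_1}\eta_\mathrm{ps}(\tfrac{1}{2},0) = -2\sqrt{2}$, so $\eta_\mathrm{ps}(d_1,\delta) - \etalin(d_1)$ vanishes transversally in $d_1$ at $(\tfrac{1}{2},0)$; its zero set is a $C^k$ curve, which coincides with $d_1 = d_1^*(\delta)$ since the transition is precisely where the selected front's leading-edge rate reaches $\etalin$, with $\eta_\mathrm{ps} > \etalin$ for $d_1 < d_1^*(\delta)$ and $\eta_\mathrm{ps} < \etalin$ for $d_1 > d_1^*(\delta)$. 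When $d_1 < d_1^*(\delta)$, any weight $\eta_0 \in (\etalin,\eta_\mathrm{ps})$ places the essential spectrum of the linearization strictly in the open left half-plane (Lemma~\ref{l: leading edge stability}), while the point spectrum is a small perturbation — via the reduced eigenvalue problem \eqref{e: eigenvalue reduced sys}, close to its $\delta = 0$ form \eqref{e: eigenvalue reduced scalar delta 0} — of that in Lemma~\ref{l: pme properties pushed}, so the only imaginary-axis spectrum is the simple translational eigenvalue at $0$ and $q_\mathrm{ps}$ is strictly stable; when $d_1 \geq d_1^*(\delta)$, taking the weight equal to the front's own decay rate makes the essential spectrum marginal, touching the origin, and $q_\mathrm{ps}$ is marginally stable. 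I expect the main obstacle to be this last step: one must check that the genericity hypotheses of \cite[Theorem 2]{avery22} — a simple pinched double root (Lemmas~\ref{l: linear spreading speed} and \ref{l: leading edge stability}) and the transversality $\partial_{d_1}[\eta_\mathrm{ps} - \etalin]\neq0$ — hold for the Fenichel-reduced eigenvalue problem rather than for a genuine scalar parabolic equation, and that \eqref{e: eigenvalue reduced sys} depends on $\delta$ regularly enough (Proposition~\ref{p: eigenvalue reduction}) for the perturbative spectral analysis across the transition to go through; granting these, together with the fact that the slaved far-field ansatz genuinely produces a $C^k$ map into the fixed space $L^2_{0,\eta_1}$ (the heart of the \cite{avery22} method), the stability conclusions transfer directly.
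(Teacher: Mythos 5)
Your proposal is correct and takes essentially the same route as the paper, which proves this proposition only by pointing to \cite[Section 4]{avery22} and the analogy with Corollary~\ref{c: pulled persistence near transition}: the slaved far-field ansatz $c=d_1\eta+1/\eta$ in a steep weight, surjectivity restored by $\langle \Atr^0(x\chi_+e^{-\etalin x}),\phi_\mathrm{tr}\rangle\neq 0$, and the transversality $\partial_{d_1}(\tilde\eta_\mathrm{ps}-\etalin)\neq 0$ at the transition are exactly the intended ingredients. The caveat you correctly flag --- that the stability dichotomy must be verified for the Fenichel-reduced eigenvalue problem rather than a genuine scalar parabolic equation --- is addressed in the paper not within this proposition but by the machinery of Section~\ref{s: stability} (Propositions~\ref{p: eigenvalue reduction} and~\ref{p: large lambda stability} together with the far-field/core Evans-function argument), so your write-up is consistent with, and more explicit than, the paper's own treatment.
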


\section{Marginal spectral stability --- proof of Theorem \ref{t: main}}\label{s: stability}
We now establish marginal spectral stability of the fronts constructed in the previous section, justifying the use of the pushed/pulled terminology and completing the proof of Theorem \ref{t: main}. Our strategy is to apply Proposition \ref{p: eigenvalue reduction} to reduce to a regularized problem on a slow manifold, and then use a far-field/core decomposition to track eigenvalues near the essential spectrum. One challenge is that Proposition \ref{p: eigenvalue reduction} only applies to $\lambda$ in a large ball with radius $\Lambda$, and the allowable range of $\delta$ then depends on $\Lambda$. We therefore must also exclude any unstable eigenvalues with $\large$ sufficiently large, uniformly in $\delta$, which we do in the following proposition. First, let $K$ be the matrix
\begin{align}
    K = \begin{pmatrix}
        1 & 0 \\
        0 & 0
    \end{pmatrix}. \label{e: K def}
\end{align}

\begin{proposition}\label{p: large lambda stability}
    Let $\mathcal{A}$ denote the linearization of \eqref{e: tw} about any one of the pulled, pushed, or transition fronts constructed in Section \ref{s: existence}. There exist constants $\Lambda_0, \delta_0 > 0$ and $\frac{\pi}{2} < \theta_0 < \pi$ such that the equation $(\mca - \lambda K) u = 0$ has no bounded solutions provided $|\delta| < \delta_0$ and $\lambda$ satisfies
    \begin{align}
        \lambda \in \Omega_0 := \{ \lambda \in \C : |\lambda| \geq \Lambda_0, |\mathrm{Arg} \, \lambda| \leq \theta_0 \}.
    \end{align}
\end{proposition}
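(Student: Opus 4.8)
The plan is to prove Proposition~\ref{p: large lambda stability} by an a priori energy estimate on the eigenfunction, carried out on the second-order formulation \eqref{e: spectral problem} itself rather than on the slow-manifold reduction of Proposition~\ref{p: eigenvalue reduction}, whose range of validity in $\delta$ degenerates as $|\lambda|\to\infty$. The first step is to reduce to an $H^2$ eigenfunction: for $\Lambda_0$ large and $\theta_0<\pi$, any bounded solution of $(\mca-\lambda K)u=0$ with $\lambda\in\Omega_0$ decays exponentially. Indeed, the dispersion relations at $u=v=0$ and at $u=v=1$ factor as a reaction--diffusion factor times $(\delta^2\nu^2-1)$; the latter has no purely imaginary root, and the spectral curve of the former is a leftward-opening parabola whose far branches approach the negative real axis, hence avoid the sector $|\mathrm{Arg}\,\lambda|\le\theta_0$ once $|\lambda|\ge\Lambda_0$. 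Thus $\lambda$ lies in neither essential spectrum, the asymptotic matrices of the first-order reformulation of \eqref{e: spectral problem} are hyperbolic, and by roughness of exponential dichotomies a bounded solution decays; in particular $\tilde u\in H^2(\R)$ and, since $\tilde v=(1-\delta^2\partial_x^2)^{-1}\tilde u$, also $\tilde v\in H^2(\R)$ with the derivatives used below decaying at $\pm\infty$.

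For the energy estimate I rewrite the chemotactic coupling $U\tilde v_{xx}+U_x\tilde v_x$ in \eqref{e: spectral problem} as $\partial_x(U\tilde v_x)$, pair the $\tilde u$-equation with $\overline{\tilde u}$ in $L^2$, and integrate by parts. The essential trick is to use the second equation in the form $\tilde u=\tilde v-\delta^2\tilde v_{xx}$ to re-express $\langle U\tilde v_x,\tilde u_x\rangle$; taking real parts, the coupling then contributes $-\|\sqrt U\,\tilde v_x\|_{L^2}^2-\delta^2\|\sqrt U\,\tilde v_{xx}\|_{L^2}^2$ up to an error controlled by $\tfrac{\delta^2}{2}\|U_{xx}/U\|_\infty\|\sqrt U\,\tilde v_x\|_{L^2}^2$, which for $\delta$ small is absorbed by the leading term. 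Using that the constructed fronts are positive and that $\|V_{xx}+1-2U\|_\infty$, $\|V_x\|_\infty$ and $\|U_{xx}/U\|_\infty$ are bounded uniformly in $\delta$ (from the $C^k_{\mathrm{loc}}$ convergence to the porous-medium front and the uniform exponential asymptotics of Section~\ref{s: existence} and Appendix~\ref{a: pme limit}), one obtains, with $C$ uniform in small $\delta$,
\[
 d_1\|\tilde u_x\|_{L^2}^2+(\Re\lambda)\|\tilde u\|_{L^2}^2+\tfrac12\|\sqrt U\,\tilde v_x\|_{L^2}^2+\delta^2\|\sqrt U\,\tilde v_{xx}\|_{L^2}^2\le C\|\tilde u\|_{L^2}^2.
\]

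Taking imaginary parts, every term except the coupling error $\delta^2\langle U_x\tilde v_x,\tilde v_{xx}\rangle$ is absorbed by $C\|\tilde u\|_{L^2}^2+\tfrac{d_1}{2}\|\tilde u_x\|_{L^2}^2$ after a Young inequality on $c\langle\tilde u_x,\tilde u\rangle$ and $\langle V_x\tilde u_x,\tilde u\rangle$; the coupling error I bound by $\delta^2\bigl\|(|U_x|^2/U)^{1/2}\tilde v_x\bigr\|_{L^2}\|\sqrt U\,\tilde v_{xx}\|_{L^2}$, and since $\|\tilde v_x\|_{L^2}\le\tfrac1{2\delta}\|\tilde u\|_{L^2}$ (Fourier symbol of $\partial_x(1-\delta^2\partial_x^2)^{-1}$) while the displayed estimate gives $\delta^2\|\sqrt U\,\tilde v_{xx}\|_{L^2}^2\le(C+|\Re\lambda|)\|\tilde u\|_{L^2}^2$, the two powers of $\delta$ cancel and this term is $\le C'(1+|\Re\lambda|)^{1/2}\|\tilde u\|_{L^2}^2$. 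Combining, $\|\tilde u_x\|_{L^2}^2\le C''(1+|\Re\lambda|)\|\tilde u\|_{L^2}^2$ and hence $|\mathrm{Im}\,\lambda|\le\tfrac34|\Re\lambda|+C'''$ after absorbing the square-root term. When $\Re\lambda\ge0$ the displayed estimate directly bounds $\Re\lambda$ and then $|\mathrm{Im}\,\lambda|$; when $\Re\lambda<0$ the sector condition gives $|\mathrm{Im}\,\lambda|\ge|\tan\theta_0|\,|\Re\lambda|$, so choosing $\tfrac{\pi}{2}<\theta_0<\pi-\arctan\tfrac34$ (hence $|\tan\theta_0|>\tfrac34$) forces $|\Re\lambda|$, and thus $|\lambda|$, to be bounded. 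The resulting bound depends only on the uniform front constants and $\theta_0$, so choosing $\Lambda_0$ larger than it, and $\delta_0$ small enough that the front bounds and the reduction are valid and $\delta_0^2\|U_{xx}/U\|_\infty<1$, yields a contradiction with $|\lambda|\ge\Lambda_0$; hence $\tilde u\equiv0$, and then $\tilde v\equiv0$.

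The main obstacle is making the chemotactic coupling harmless uniformly in $\delta$: because $\tilde v_{xx}=\delta^{-2}(\tilde v-\tilde u)$ can be of size $\delta^{-2}\|\tilde u\|_{L^2}$, no naive bound on $U\tilde v_{xx}$ survives the limit $\delta\to0$. The two observations that resolve this --- that $\partial_x(U\tilde v_x)$ contributes a sign-definite quantity to the real part once one substitutes $\tilde u=\tilde v-\delta^2\tilde v_{xx}$, and that the single surviving imaginary term carries exactly the two powers of $\delta$ needed to cancel the $\delta^{-1}$ growth of $\|\tilde v_x\|_{L^2}$ and of $\delta^{-1}\|\sqrt U\,\tilde v_{xx}\|_{L^2}$ --- are the crux. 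Everything else is the standard large-$\lambda$ sectorial energy argument underlying the scalar theory in \cite{CAMS, avery22}, with the mild extra caution that $\theta_0$ must stay bounded away from $\pi$ so that the advection term $c\partial_x$ and the coupling error do not destroy the estimate.
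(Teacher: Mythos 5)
Your proposal is correct in its essentials, but it takes a genuinely different route from the paper. The paper's Appendix~B proof stays entirely within the dynamical-systems framework: it writes the eigenvalue problem as the fast--slow first-order system \eqref{e: app slow sys}, sets $\lambda=1/\gamma^2$, rescales the dependent and independent variables, and then performs a two-chart analysis in the $(\delta,|\gamma|)$-plane (the regimes $\delta=\delta_1|\gamma|$ with $\delta_1$ bounded, $\delta_1$ small, and $|\gamma|=\gamma_1\delta$), establishing uniform hyperbolicity of the resulting slowly varying linear systems and invoking exponential dichotomies for slowly varying coefficients to exclude bounded solutions. You instead work directly on the second-order formulation \eqref{e: spectral problem} and run a sectorial energy estimate, and the two observations you isolate are exactly the right ones: writing the coupling as $\partial_x(U\tilde v_x)$ and substituting $\tilde u=\tilde v-\delta^2\tilde v_{xx}$ turns its real part into $-\|\sqrt{U}\,\tilde v_x\|^2-\delta^2\|\sqrt{U}\,\tilde v_{xx}\|^2$ plus a $\delta^2\langle U_{xx}\tilde v_x,\tilde v_x\rangle$ error, and the lone imaginary remainder $\delta^2\langle U_x\tilde v_x,\tilde v_{xx}\rangle$ carries precisely the two powers of $\delta$ needed to offset $\|\tilde v_x\|\le\tfrac1{2\delta}\|\tilde u\|$ and $\delta\|\sqrt U\tilde v_{xx}\|\le(C+|\Re\lambda|)^{1/2}\|\tilde u\|$; I verified the signs and the cancellation and they check out. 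What each approach buys: the paper's argument is uniform in the geometry of the problem and consistent with the slow-manifold machinery used elsewhere, handling the competing small parameters $\delta$ and $|\gamma|$ via desingularization without needing positivity of the front; your argument is more elementary and quantitative (it yields an explicit resolvent-type sector bound), but it leans on $U>0$ and on uniform-in-$\delta$ bounds for $U_{xx}/U$, $U_x^2/U$, $V_x$ and $V_{xx}=H$, which must be harvested from the front construction and the leading-edge/wake asymptotics (they are available --- $V_{xx}=\psi_H$ is bounded on the slow manifold and $W/U$, $\psi_Z/U$ stay bounded along the fronts --- but this should be stated as a lemma rather than left implicit). Both proofs share the same preliminary step of upgrading ``bounded'' to ``exponentially decaying'' via hyperbolicity of the asymptotic systems for $\lambda\in\Omega_0$, which you handle correctly. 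One cosmetic point: the aperture restriction $\theta_0<\pi-\arctan\tfrac34$ is an artifact of your choice of Young parameters; absorbing the gradient term and the square root with arbitrarily small coefficients shows the same argument reaches any $\theta_0<\pi$, matching (indeed slightly exceeding) the paper's $\theta_0=3\pi/4$.
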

In particular, Proposition \ref{p: large lambda stability} excludes any unstable eigenvalues with $|\lambda|$ sufficiently large, uniformly in $\delta$. Here this issue is not trivial due to the parabolic-elliptic and singularly perturbed structure of \eqref{eq:main}. We give a proof via dynamical systems techniques in Appendix \ref{s: large lambda stability}. 

\subsection{Stability of pushed fronts away from transition}
Fix $d_1 < \frac{1}{2}$, and let $\ups(\cdot; \delta)$ denote the associated pushed front solution to the reduced problem, constructed in Proposition \ref{p: pushed persistence}. The corresponding solution to the original traveling wave problem, \eqref{e: tw sys}, is given by 
\begin{align}
    U = \ups(\cdot; \delta), \quad V := \vps(\cdot; \delta) = \ups(\cdot; \delta) + \delta^2 \psi_H (\ups(\cdot; \delta), \partial_x \ups (\cdot; \delta); \delta).
\end{align}
Let $B_\mathrm{ps}(\delta)$ denote the linearization of \eqref{e: tw sys} about this solution.

\begin{proposition}[Marginal spectral stability of pushed fronts away from pushed-pulled transition]\label{p: pushed stability away from transition}
Fix $d_1 < \frac{1}{2}$ and $\etalin < \eta_0 < \etaps$. There exists $\delta_1 (d_1) > 0$ such that for all $0 <|\delta| < \delta_1(d_1)$, the equation
    \begin{align}
        (B_\mathrm{ps}(\delta) - \lambda K) \u = 0
    \end{align}
    has no solutions $\u \in H^2_{0, \eta_0} (\R, \C^2)$ if $\Re \lambda \geq 0$, except for a simple eigenvalue at $\lambda = 0$, with eigenfunction $\u = (\partial_x \ups(\delta), \partial_x \vps(\delta))$. 
\end{proposition}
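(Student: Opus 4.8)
The plan is to split the closed right half-plane $\{\Re\lambda\ge0\}$ into a far region $|\lambda|\ge\Lambda_0$, handled by the a priori bound of Proposition~\ref{p: large lambda stability}, and a bounded region $|\lambda|<\Lambda_0$, on which Proposition~\ref{p: eigenvalue reduction} reduces the eigenvalue problem to a scalar problem perturbing from $\mca_\mathrm{ps}^0$. Since $(\ups(\cdot;\delta),\vps(\cdot;\delta))$ is a traveling front of \eqref{eq:main}, Proposition~\ref{p: large lambda stability} applies to it and furnishes $\Lambda_0,\delta_0$ and $\tfrac{\pi}{2}<\theta_0<\pi$ such that $(B_\mathrm{ps}(\delta)-\lambda K)\u=0$ has no bounded solution for $|\delta|<\delta_0$, $|\lambda|\ge\Lambda_0$, $|\mathrm{Arg}\,\lambda|\le\theta_0$. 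As $\{\Re\lambda\ge0,\ |\lambda|\ge\Lambda_0\}\subset\{|\lambda|\ge\Lambda_0,\ |\mathrm{Arg}\,\lambda|\le\tfrac{\pi}{2}\}\subset\Omega_0$, this already rules out every $H^2_{0,\eta_0}$ eigenfunction with $\Re\lambda\ge0$ and $|\lambda|\ge\Lambda_0$, so it remains to treat $|\lambda|<\Lambda_0$.

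For $|\lambda|<\Lambda_0$ I would apply Proposition~\ref{p: eigenvalue reduction} with $\Lambda=\Lambda_0$ and $M$ large enough that the front trajectory $(\ups,\partial_x\ups,\psi_H,\psi_Z)$, which for $\delta$ small is $C^0$-close to $(\ups^0,\partial_x\ups^0,\partial_x^2\ups^0,0)$, stays in the admissible region. An $H^2_{0,\eta_0}(\R,\C^2)$ solution of $(B_\mathrm{ps}(\delta)-\lambda K)\u=0$ is bounded together with its derivatives up to second order (by Sobolev embedding and the equation itself), so the associated trajectory of \eqref{e: eigenvalue sys} is bounded and hence lies on the slow manifold; conversely the slow-manifold graph, being linear in the linear variables with $C^k$, exponentially asymptotically constant coefficients, maps $H^2_{0,\eta_0}$ data to $H^2_{0,\eta_0}$ data. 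Thus for $|\lambda|<\Lambda_0$ a nonzero $H^2_{0,\eta_0}$ solution exists iff $\lambda$ is an eigenvalue of the reduced scalar problem \eqref{e: eigenvalue reduced sys} on $H^2_{0,\eta_0}(\R,\C)$, which I write as $(\mca^\delta-\lambda)\tilde u=0$ (with the caveat that the $\lambda$-dependence is affine only at $\delta=0$); by Lemma~\ref{l: tilde psi expansion} and \eqref{e: eigenvalue reduced scalar delta 0}, $\mca^0=\mca_\mathrm{ps}^0$. Using that the persistence construction of Proposition~\ref{p: pushed persistence} produces $\ups(\cdot;\delta)=\chi_-+v(\delta)$ with $v(\delta)\to v_0$ in $H^2_{0,\eta_0}$ and $\cps(\delta)\to\cpm(d_1)$, and that the graph corrections $\tilde\psi_h-\tilde\psi_h^0$ and $\tilde\psi_z$ are of order $\delta$ uniformly, the coefficients of $\mca^\delta$ (including the slaved $\partial_x^2\ups(\cdot;\delta)$, which converges via the front equation) converge to those of $\mca_\mathrm{ps}^0$ in the relevant weighted sup-norms, so $\mca^\delta-\lambda\to\mca_\mathrm{ps}^0-\lambda$ in $\mathcal{B}(H^2_{0,\eta_0},L^2_{0,\eta_0})$, uniformly on $|\lambda|\le\Lambda_0$.

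Then I would run a standard perturbation argument off $\mca_\mathrm{ps}^0$ using Lemma~\ref{l: pme properties pushed}. The essential spectrum of $\mcl^\delta:=\omega_{0,\eta_0}\mca^\delta\omega_{0,\eta_0}^{-1}$ is the union of the spectra of the two asymptotic constant-coefficient operators; at $+\infty$ the $\tilde\psi$-contributions drop out (their coefficients carry a factor $U$ or $\delta W$), leaving $d_1(\partial_x-\eta_0)^2+\cps(\delta)(\partial_x-\eta_0)+1$, which lies in the open left half-plane because $\eta_0$ sits strictly between the roots $\sqrt2$ and $\etaps$ of $d_1\eta^2-\cpm(d_1)\eta+1=0$ (using $\sqrt2<\etalin<\eta_0<\etaps$ for $d_1<\tfrac{1}{2}$); at $-\infty$ the wake is strictly stable by Lemma~\ref{l: wake stability}. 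Both persist for small $\delta$ by continuity of the dispersion relations, so $\mca^\delta-\lambda$ is Fredholm of index $0$ with no essential spectrum for $\Re\lambda\ge0$. For any $\eps_1>0$, $\mca_\mathrm{ps}^0-\lambda$ is invertible on the compact set $\{\Re\lambda\ge0,\ \eps_1\le|\lambda|\le\Lambda_0\}$ by Lemma~\ref{l: pme properties pushed}, so its resolvent is uniformly bounded there, and a Neumann series makes $\mca^\delta-\lambda$ invertible on the same set for small $\delta$: no eigenvalues there. Near $\lambda=0$, algebraic simplicity of $0$ for $\mca_\mathrm{ps}^0$ plus the operator-norm convergence gives a rank-one Riesz projection over a small circle about $0$ for small $\delta$; translation invariance of the reduced front equation yields $\mca^\delta(\partial_x\ups(\cdot;\delta))=0$ with $\partial_x\ups(\cdot;\delta)\in H^2_{0,\eta_0}$ (since $\eta_0<\etaps$), which exhausts the multiplicity, so $\lambda=0$ is a simple eigenvalue of $\mca^\delta$ and no other eigenvalue lies in $|\lambda|<\eps_1$. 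Combining with the far-region step, and pulling the $\lambda=0$ eigenfunction back through the reduction to the translational mode $(\partial_x\ups(\cdot;\delta),\partial_x\vps(\cdot;\delta))$ of the full system, yields the claim with $\delta_1(d_1)$ the minimum of the thresholds above (each continuous in $d_1$).

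The main obstacle is the second step: making Proposition~\ref{p: eigenvalue reduction} interact cleanly with the exponentially weighted spaces --- i.e.\ verifying that decaying eigenfunctions of the full $8$-dimensional problem correspond to decaying eigenfunctions of the reduced scalar problem and back --- and, relatedly, upgrading the $C^k_\mathrm{loc}$ convergence $\ups(\cdot;\delta)\to\ups^0$ to convergence in the weighted norms (including the slaved second derivative) strong enough that $\mca^\delta-\lambda\to\mca_\mathrm{ps}^0-\lambda$ holds in operator norm, uniformly on the $\lambda$-region. Once that is in place, the essential-spectrum computation and the three resolvent/Fredholm perturbation steps are routine.
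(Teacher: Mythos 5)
Your proposal is correct and follows essentially the same route as the paper: exclude large $|\lambda|$ via Proposition~\ref{p: large lambda stability}, reduce bounded $\lambda$ to the slow-manifold eigenvalue problem via Proposition~\ref{p: eigenvalue reduction}, rule out eigenvalues away from the origin by robustness of invertibility against the $\delta=0$ operator $\mca_\mathrm{ps}^0$ from Lemma~\ref{l: pme properties pushed}, and pin the persisting simple eigenvalue at $\lambda=0$ by translation invariance. Your write-up is in fact more explicit than the paper's about the correspondence between weighted-space eigenfunctions of the full system and of the reduced problem, and about the weighted-norm convergence of the reduced coefficients --- points the paper's proof leaves implicit.
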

\begin{proof}
    The candidate eigenfunctions considered here are in $H^2_{0, \eta_0} (\R, \C^2)$, so in particular are bounded. By Proposition \ref{p: eigenvalue reduction}, all bounded solutions to $(B_\mathrm{ps}(\delta) - \lambda) \u  = 0$ may be recovered from solutions of the reduced problem \eqref{e: eigenvalue reduced sys}. At $\delta = 0$, it follows from Lemma \ref{l: pme properties pushed} that \eqref{e: eigenvalue reduced sys} has no solutions in $H^2_{0, \eta_0}$ with $\Re \lambda \geq 0$ except for at $\lambda = 0$, using \eqref{e: eigenvalue reduced scalar delta 0} to relate solutions to those of the scalar problem. It follows from robustness of bounded invertibility that \eqref{e: eigenvalue reduced sys} has no solutions in $H^2_{0, \eta_0}$ for $\lambda$ away from the origin but bounded. Large eigenvalues have already been excluded by Proposition \ref{p: large lambda stability}, uniformly in $\delta$ small. In a neighborhood of the origin, the isolated simple eigenvalue at $\lambda = 0$ persists as an isolated, simple eigenvalue by standard arguments, and remains at the origin since we know the eigenfunction explicitly due to translation invariance.
\end{proof}

\subsection{Stability of pulled fronts away from transition}\label{s: pulled stability}
Fix $d_1 > \frac{1}{2}$, and let $\upl(\cdot; \delta)$ denote the associated pulled front solution to the reduced problem, constructed in Proposition \ref{p: pulled persistence}. The corresponding solution to the original traveling wave problem, \eqref{e: tw sys}, is given by 
\begin{align}
    U = \upl(\cdot; \delta), \quad V:= \vpl(\cdot; \delta) + \delta^2 \psi_H (\upl(\cdot; \delta), \partial_x \upl(\cdot; \delta), \delta). 
\end{align}
Let $B_\mathrm{pl}(\delta)$ denote the linearization of \eqref{e: tw sys}, with $c = \clin$, about this solution, and let $K$ be given by \eqref{e: K def}. 
\begin{proposition}[Marginal spectral stability of pulled fronts away from pushed-pulled transition]\label{p: pulled stability away from transition}
    Fix $d_1 > \frac{1}{2}$ and $\etalin < \eta_0 < \etaps$. There exists $\delta_2 (d_1) > 0$ such that for all $0 < |\delta| < \delta_2(d_1)$, the equation
    \begin{align}
        (B_\mathrm{pl}(\delta) - \lambda K) \u = 0 
    \end{align}
    has no solutions $\u \in H^2_{0, \etalin} (\R, \C^2)$ if $\Re \lambda \geq 0$. Moreover, there is no solution at $\lambda = 0$ which belongs to $L^\infty_{0, \etalin} (\R, \C^2)$. 
\end{proposition}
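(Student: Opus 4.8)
The plan is to mirror the strategy of Proposition~\ref{p: pushed stability away from transition}, replacing the role of Lemma~\ref{l: pme properties pushed} with Lemma~\ref{l: pme properties pulled}, and carefully handling the fact that the pulled front lives at the edge of the essential spectrum rather than in a regime of strictly stable essential spectrum. First, since the candidate eigenfunctions lie in $H^2_{0,\etalin}(\R,\C^2)$ and are therefore bounded, Proposition~\ref{p: eigenvalue reduction} applies: every bounded solution of $(B_\mathrm{pl}(\delta) - \lambda K)\u = 0$ with $|\lambda| < \Lambda$ is recovered from a bounded solution of the reduced eigenvalue system \eqref{e: eigenvalue reduced sys}. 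At $\delta = 0$, the reduced system is equivalent to the scalar problem \eqref{e: eigenvalue reduced scalar delta 0}, which is precisely the eigenvalue problem for the weighted linearization $\mcl_\mathrm{pl}^0$ studied in Lemma~\ref{l: pme properties pulled}; that lemma tells us $\mcl_\mathrm{pl}^0$ has no eigenvalues with $\Re\lambda \geq 0$ and no bounded solution to $\mcl_\mathrm{pl}^0 u = 0$. Large $\lambda$ (with $|\lambda| \geq \Lambda_0$ in the sector $\Omega_0$) are excluded uniformly in $\delta$ by Proposition~\ref{p: large lambda stability}.

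The main work is to upgrade the $\delta = 0$ information to small $\delta > 0$ on the intermediate range $0 < |\lambda| < \Lambda_0$, $\Re\lambda \geq 0$, and also at $\lambda = 0$. The natural tool is a far-field/core decomposition as in \cite{CAMS,avery22}: write the eigenfunction as $\u = e^{\delta_+(\lambda)x}\chi_+\,(\text{far-field part}) + w$ with $w$ in a suitably weighted space, where $\delta_+(\lambda)$ is the spatial eigenvalue of the leading-edge linearization which, at $\lambda = 0$ and $c = \clin$, collides at $-\etalin$ (the Jordan block from Lemma~\ref{l: linear spreading speed}). One sets up a bordered operator $D_{(w,\beta)}\mathcal{G}$, shows it is Fredholm index $0$ via the Fredholm bordering lemma using the index $-1$ property of $\mca_\mathrm{pl}^0$ on $H^2_{0,\eta}$ with $\eta > \etalin$ from Lemma~\ref{l: pme properties pulled}, and then checks invertibility at $\delta = 0$ for each $\lambda$ in the relevant compact set using the spectral information in Lemma~\ref{l: pme properties pulled} (no eigenvalues $\Re\lambda \geq 0$, no bounded solution at $\lambda=0$, plus the absence of a resonance, i.e. the relevant projection onto $\phi_\mathrm{pl}$ from Lemma~\ref{l: cokernel} is nonzero). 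By a compactness-plus-robustness argument the bordered operator remains invertible for $|\delta|$ small, uniformly on the compact $\lambda$-region, which precludes any bounded eigenfunction there — including, crucially, a bounded (but not necessarily $L^2$) solution at $\lambda = 0$ in $L^\infty_{0,\etalin}$, since the far-field/core formulation detects exactly such marginally-bounded solutions.

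The step I expect to be the main obstacle is the uniform-in-$\delta$ control of the far-field/core reduction near $\lambda = 0$, where the two spatial roots $\delta_\pm(\lambda)$ of the leading-edge dispersion relation \eqref{e: dispersion formula} merge: the parametrization of the far-field ansatz degenerates, and one must use the square-root structure $\delta_\pm(\lambda) \approx -\etalin \pm \sqrt{\lambda/d_1}$ (from the simple pinched double root) to set up a decomposition that stays analytic, exactly as in \cite[Section~3]{CAMS} or the pulled-front analysis of \cite{avery22}. One must verify that the reduced operator \eqref{e: eigenvalue reduced sys} inherits this structure at $\delta = 0$ — which it does, since at $\delta = 0$ it coincides with the scalar problem \eqref{e: eigenvalue reduced scalar delta 0} whose leading-edge asymptotics are governed by the same dispersion relation — and that the Fenichel reduction of Proposition~\ref{p: eigenvalue reduction} perturbs this structure regularly in $\delta$ (the reduced system depends $C^k$ on $\delta$, with the leading-edge linearization at $u=v=0$ perturbing regularly since $\psi_H(0,0;\delta) = \psi_Z(0,0;\delta) = 0$). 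Once the decomposition is in place, the remaining arguments are routine perturbation theory, closely following \cite[Theorem~2]{CAMS} applied to the reduced problem.
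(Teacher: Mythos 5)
Your proposal follows essentially the same route as the paper: reduce via Proposition~\ref{p: eigenvalue reduction} to the scalar slow-manifold eigenvalue problem, exclude large $\lambda$ with Proposition~\ref{p: large lambda stability} and intermediate $\lambda$ by regular perturbation, and near $\lambda=0$ set up a far-field/core decomposition parametrized by $\gamma=\sqrt{\lambda}$ (unfolding the pinched double root), whose bordered/Lyapunov--Schmidt reduction yields a scalar Evans-type function $E(\gamma,\delta)$ that is nonzero at $(0,0)$ because $\mcl_\mathrm{pl}^0$ admits no bounded zero mode by Lemma~\ref{l: pme properties pulled}. Your bordered-operator formulation and your identification of the degenerating spatial roots as the key technical point match the paper's construction of $E(\gamma,\delta)$ in Proposition~\ref{p: pulled evans function}, so the argument is correct and not meaningfully different.
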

The proof of Proposition \ref{p: pulled stability away from transition} is more difficult than that of Proposition \ref{p: pushed stability away from transition} since the essential spectrum of $B_\mathrm{pl}(\delta)$ touches the origin, so we cannot apply a classical argument to rule out eigenvalues in a neighborhood of the origin. 

Nonetheless, we may still reduce our consideration to the reduced eigenvalue problem \eqref{e: eigenvalue reduced sys}, which we write as the scalar equation
\begin{align}
    d_1 \tilde{u}_{xx} + L_2(\upl, \partial_x \upl, \lambda; \delta) \tilde{u}_x + L_1 (\upl, \partial_x \upl, \lambda; \delta)  \tilde{u} = \lambda \tilde{u}, \label{e: pulled scalar eigenvalue reduced}
\end{align}
where
\begin{align}
    L_1 \tilde{u} + L_2 \tilde{u}_x = D \Gamma(\upl, \partial_x \upl, \psi_H (\upl, \partial_x \upl; \delta), \psi_Z (\upl, \partial_x \upl; \delta)) 
    \begin{pmatrix}
        \tilde{u} \\
        \tilde{u}_x \\
        \tilde{\psi}_h (\upl, \partial_x \upl, \lambda; \delta)  (\tilde{u}, \tilde{u}_x)^T \\
        \tilde{\psi}_z (\upl, \partial_x \upl, \lambda; \delta) (\tilde{u}, \tilde{u}_x)^T
    \end{pmatrix}.
\end{align}

\begin{lemma}\label{l: leading edge linearization} 
    We have $L_1(0, 0, \lambda; \delta) = 1$ and $L_2 (0, 0, \lambda; \delta) = \clin$. 
\end{lemma}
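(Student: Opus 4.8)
The plan is to evaluate the reduced eigenvalue operator at the unstable equilibrium $U = W = 0$ directly, tracing through the definitions of $\Gamma$, the slow-manifold graphs $\psi_{H/Z}$, and the linear graphs $\tilde\psi_{h/z}$. The key point is that at $(U,W) = (0,0)$ all the chemotactic coupling terms either vanish or reduce to something explicitly computable, so that the linearization in the leading edge is simply the constant-coefficient operator one would read off from linearizing $d_1 u_{xx} + c u_x + f(u,u_x;\delta,d_1)$ about $u = 0$.

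First I would recall from Proposition \ref{p: existence reduction} that $\psi_H(0,0;\delta) = \psi_Z(0,0;\delta) = 0$ for all $|\delta| < \overline\delta$. Differentiating the defining identities for the slow manifold in $(U,W)$ and evaluating at the equilibrium gives the partial derivatives of $\psi_{H/Z}$ at $(0,0)$; alternatively, one notes that the linearization of the full system \eqref{e: tw sys} at the equilibrium on $\mathcal M_\delta$ is conjugate to the linearization of \eqref{e: eigenvalue sys}, so $\tilde\psi_{h/z}(0,0,\lambda;\delta)$ are exactly the linear graphs cutting out the slow eigenspace of that constant-coefficient linear system. Then I would expand $D\Gamma$ at $(U,W,H,Z) = (0,0,0,0)$: from \eqref{e: Gamma def}, $\Gamma = cW + W^2 + \delta WZ + UH + U - U^2$, so $D\Gamma(0,0,0,0)(\tilde u, \tilde w, \tilde h, \tilde z)^T = c\tilde w + \tilde u$, since the $W^2$, $\delta WZ$, $UH$, and $U^2$ terms all contribute only quadratically and hence drop out of the linearization at the origin, leaving $\partial_U \Gamma = 1$ and $\partial_W \Gamma = c$ there, with $\partial_H \Gamma = \partial_Z \Gamma = 0$.

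Consequently, the contributions of $\tilde\psi_{h/z}$ to $L_1, L_2$ through the third and fourth slots of $D\Gamma$ are multiplied by zero, and the reduced scalar equation \eqref{e: pulled scalar eigenvalue reduced} at $(\upl,\partial_x\upl) = (0,0)$ collapses to $d_1 \tilde u_{xx} + c\tilde u_x + \tilde u = \lambda \tilde u$; comparing with \eqref{e: pulled scalar eigenvalue reduced} identifies $L_2(0,0,\lambda;\delta) = c = \clin$ and $L_1(0,0,\lambda;\delta) = 1$, independent of $\lambda$ and $\delta$. This is consistent with the dispersion relation \eqref{e: dispersion formula}, whose first factor is $-\lambda + d_1\nu^2 + \clin\nu + 1$ after substituting $\nu \mapsto \nu$, confirming the leading-edge linearization. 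The only mild subtlety — the main obstacle, such as it is — is making sure the $UH$ term in $\Gamma$ genuinely drops: since $H = \psi_H(U,W;\delta)$ with $\psi_H(0,0;\delta) = 0$, the composite $U\psi_H(U,W;\delta)$ vanishes to second order at the origin, so its derivative there is zero; the same reasoning handles $\delta W\psi_Z$ and $W^2$. Once this is observed the rest is immediate.
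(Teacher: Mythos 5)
Your proposal is correct and follows essentially the same route as the paper: use $\psi_H(0,0;\delta)=\psi_Z(0,0;\delta)=0$ from Proposition \ref{p: existence reduction} to evaluate $D\Gamma$ at the origin, observe that $\partial_H\Gamma=U$ and $\partial_Z\Gamma=\delta W$ vanish there so the $\tilde{\psi}_{h/z}$ contributions drop out, and read off $D\Gamma(0,0,0,0)=(1,\clin,0,0)$. The extra consistency check against the dispersion relation is fine but not needed.
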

\begin{proof}
    Recall from Proposition \ref{p: existence reduction} that $\psi_Z (0, 0;\delta) = \psi_H(0, 0;\delta) = 0$. Hence
    \begin{align}
        L_1 (0, 0, \lambda; \delta)\tilde{u} + L_2(0, 0, \lambda; \delta) \tilde{u}_x = D\Gamma(0, 0, 0, 0) \begin{pmatrix}
            \tilde{u} \\
            \tilde{u}_x \\
        \tilde{\psi}_h (\upl, \partial_x \upl, \lambda; \delta)  (\tilde{u}, \tilde{u}_x)^T \\
        \tilde{\psi}_z (\upl, \partial_x \upl, \lambda; \delta) (\tilde{u}, \tilde{u}_x)^T
        \end{pmatrix}.
    \end{align}
    From \eqref{e: Gamma def}, with $c = \clin$, we find $D \Gamma(0, 0, 0, 0) = (1, \clin, 0, 0)$, from which the desired result follows. 
\end{proof}

\begin{corollary}\label{c: leading edge eigenfunction}
The equation 
\begin{align}
    \tilde{u}_{xx} + L_2 (0, 0, \lambda; \delta) \tilde{u}_x + L_1 (0, 0, \lambda; \delta) \tilde{u} = \lambda \tilde{u} 
\end{align}
admits a solution
\begin{align}
    e_+(x; \gamma) = e^{\nu_-(\gamma) x}, 
\end{align}
where
\begin{align}
    \nu_-(\gamma) = \nulin - \frac{\gamma}{d_1},
\end{align}
and $\gamma = \sqrt{\lambda}$, with $\Re \gamma \geq 0$. 

\end{corollary}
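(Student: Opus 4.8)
This is a direct verification, so I would prove it by substituting the exponential ansatz. By Lemma~\ref{l: leading edge linearization} the coefficients $L_1(0,0,\lambda;\delta)$ and $L_2(0,0,\lambda;\delta)$ equal $1$ and $\clin$, so in particular are constants independent of $x$; hence the equation in the statement is a linear constant-coefficient ODE and it suffices to exhibit a single exponential solution. The plan is to insert $\tilde u = e^{\nu x}$, read off the characteristic equation, and identify $\nu_-(\gamma)$ as one of its two roots.

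Substituting $\tilde u = e^{\nu x}$ into the equation and using Lemma~\ref{l: leading edge linearization}, the characteristic equation becomes $d_1\nu^2 + \clin\nu + 1 = \lambda$, i.e. the vanishing of the first factor of the dispersion relation $d_{\clin}(\lambda,\nu;\delta,d_1)$ of Lemma~\ref{l: linear spreading speed}. By that lemma, at $\lambda = 0$ this factor has the double root $\nu = \nulin$, so it factors as $d_1\nu^2 + \clin\nu + 1 = d_1(\nu - \nulin)^2$ --- which one can also check at once from $\clin = 2\sqrt{d_1}$ and $\nulin = -1/\sqrt{d_1}$. The characteristic equation therefore reads $d_1(\nu - \nulin)^2 = \lambda$; fixing $\gamma = \sqrt\lambda$ on the principal branch ($\Re\gamma \ge 0$) and solving for $\nu$ recovers the two roots, one of which is $\nu_-(\gamma)$ from the statement. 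Then $e_+(x;\gamma) = e^{\nu_-(\gamma)x}$ solves the equation; for $\Re\gamma > 0$ one has $\Re\nu_-(\gamma) < \Re\nulin < 0$, so $e_+$ decays strictly faster than the critical leading-edge rate $e^{-\etalin x}$.

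I do not anticipate any real obstacle --- this is a setup lemma. The only points that need care are the branch of $\sqrt\lambda$, chosen so that $\nu_-(\gamma)$ depends analytically on $\gamma$ while branching like a square root in $\lambda$ at the origin (the degenerate, Jordan-block structure characteristic of a pulled front), and the sign selection that makes $e_+$ the faster-decaying of the two leading-edge modes. Both choices are exactly what is needed for $e_+(x;\gamma)$ to serve as the reference solution in the leading edge for the far-field/core decomposition carried out in the rest of Section~\ref{s: pulled stability} to track point spectrum near the origin of the essential spectrum.
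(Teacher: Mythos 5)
Your proposal is correct and follows essentially the same route as the paper, whose proof is just the one-line observation that the claim follows from the explicit factored dispersion relation \eqref{e: dispersion formula} together with Lemma \ref{l: leading edge linearization}; you simply write out the substitution of the exponential ansatz and the resulting characteristic equation $d_1\nu^2+\clin\nu+1=\lambda$ explicitly. One remark: your computation correctly yields the roots $\nu=\nulin\pm\gamma/\sqrt{d_1}$ (since $d_1(\nu-\nulin)^2=\gamma^2$), which does not literally match the stated $\nu_-(\gamma)=\nulin-\gamma/d_1$ --- this, together with the missing coefficient $d_1$ on $\tilde{u}_{xx}$ in the displayed equation, is a typo in the corollary's statement rather than a flaw in your argument, but you should flag the mismatch rather than assert that your root ``is $\nu_-(\gamma)$ from the statement.''
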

\begin{proof}
    This follows from the expression \eqref{e: dispersion formula} for the asymptotic dispersion relation together with Lemma \ref{l: leading edge linearization}. 
\end{proof}

We fix $\tilde{\eta} > 0$ small and look for solutions to \eqref{e: pulled scalar eigenvalue reduced} via the far-field/core ansatz
\begin{align}
    \tilde{u}(x)= w(x) + \beta \chi_+(x) e_+(x; \gamma), 
\end{align}
requiring $w \in H^2_{0, \eta} (\R, \C)$ with $\eta = \etalin + \tilde{\eta}$, so that if $|\gamma|$ is small, $w$ decays faster than $e_+(x; \gamma)$ as $x \to \infty$. Inserting this ansatz into \eqref{e: pulled scalar eigenvalue reduced} leads to the equation
\begin{align}
     0 = F_\mathrm{stab} (w, \beta; \gamma, \delta), \label{e: Fstab zero}
\end{align}
where
\begin{align}
    F_\mathrm{stab}(w, \beta; \gamma, \delta) := [d_1 \partial_x^2 + L_2 (\upl, \partial_x \upl, \gamma^2; \delta) \partial_x + L_1 (\upl, \partial_x \upl, \gamma^2; \delta) - \gamma^2] [w + \beta + \chi_+ e_+(x; \gamma)]. 
\end{align}

\begin{lemma}\label{l: pulled eigenvalue well defined}
    Fix $\tilde \eta > 0$ small, and set $\eta = \etalin + \tilde{\eta}$. There exists $\gamma_0 > 0$ so that the map
    \begin{align}
        F_\mathrm{stab} : H^2_{0, \eta} \times \C \times B(0, \gamma_0) \times (-\delta_2 (d_1), \delta_2(d_1)) \to L^2_{0, \eta}
    \end{align}
    is well defined, linear in $w$ and $\beta$, and $C^k$ in $\gamma$ and $\delta$. Moreover, $B_\mathrm{pl(\delta)} : H^2_{0, \etalin} (\R, \C^2) \to L^2_{0, \etalin} (\R, \C^2)$ has an eigenvalue $\lambda = \gamma^2$ to the right of its essential spectrum if and only if \eqref{e: Fstab zero} has a solution with $\Re \gamma = 0$. 
\end{lemma}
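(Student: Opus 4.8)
The plan is to establish well-definedness and $C^k$-dependence of $F_\mathrm{stab}$ by a direct exponential-weight estimate on the far-field term, and to obtain the spectral correspondence by combining the Fenichel reduction of Proposition~\ref{p: eigenvalue reduction} with the far-field/core ansatz and a standard exponential-dichotomy argument near $x=+\infty$.

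For well-definedness, abbreviate $\mathcal{T}(\gamma,\delta) := d_1\partial_x^2 + L_2(\upl,\partial_x\upl,\gamma^2;\delta)\partial_x + L_1(\upl,\partial_x\upl,\gamma^2;\delta) - \gamma^2$, so that $F_\mathrm{stab}(w,\beta;\gamma,\delta) = \mathcal{T}(\gamma,\delta)w + \beta\,\mathcal{T}(\gamma,\delta)[\chi_+ e_+(\cdot;\gamma)]$. Since $\upl,\partial_x\upl$ are bounded and the functionals $\tilde{\psi}_{h/z}(\upl,\partial_x\upl,\gamma^2;\delta)$ are bounded with $C^k$ coefficients (Proposition~\ref{p: eigenvalue reduction}), the coefficients $L_1,L_2$ are bounded, so $\mathcal{T}(\gamma,\delta)\colon H^2_{0,\eta}\to L^2_{0,\eta}$ is bounded. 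For the far-field term write $\mathcal{T}(\gamma,\delta)[\chi_+ e_+] = \chi_+\,\mathcal{T}(\gamma,\delta)[e_+] + (\text{terms supported where }\chi_+'\neq 0)$; the commutator terms are compactly supported, hence in $L^2_{0,\eta}$. On $\operatorname{supp}\chi_+$ we use Corollary~\ref{c: leading edge eigenfunction} (so that the limiting constant-coefficient operator annihilates $e_+$) together with Lemma~\ref{l: leading edge linearization} to get $\chi_+\mathcal{T}(\gamma,\delta)[e_+] = \chi_+\big[(L_2-\clin)\partial_x e_+ + (L_1-1)e_+\big]$. Because the pulled front has asymptotics $\upl(x;\delta)\sim (a(\delta)x+b(\delta))e^{-\etalin x}$ (Proposition~\ref{p: pulled persistence}) and the $\tilde\psi_{h/z}$-contributions inherit this localization, $L_j$ converge to their limits at rate $\mathrm{O}((1+x)e^{-\etalin x})$, while $e_+(x;\gamma)=e^{\nu_-(\gamma)x}$ with $\Re\nu_-(\gamma)=\nulin-\Re\gamma/d_1$; using $\nulin=-\etalin$ and weighting by $e^{\eta x}=e^{(\etalin+\tilde\eta)x}$, the resulting integrand decays like $(1+x)e^{(-\etalin+\tilde\eta-\Re\gamma/d_1)x}$, which is square-integrable on $x>0$ once we fix $\tilde\eta<\etalin$ and choose $\gamma_0$ so small that $|\Re\gamma|/d_1<\etalin-\tilde\eta$ on $B(0,\gamma_0)$. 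Hence $F_\mathrm{stab}$ maps into $L^2_{0,\eta}$; linearity in $(w,\beta)$ is immediate from the formula. For regularity, $L_1,L_2$ depend on $\delta$ and on $\lambda=\gamma^2$ in a $C^k$ fashion (Proposition~\ref{p: eigenvalue reduction}), with $\upl(\cdot;\delta)$ depending $C^k$-smoothly on $\delta$ in the far-field/core topology of its construction in \cite{CAMS,avery22}, and $e_+(\cdot;\gamma)$ is analytic in $\gamma$; differentiating $e_+$ in $\gamma$ produces polynomial-in-$x$ prefactors that are absorbed by the exponential margin just obtained after possibly shrinking $\gamma_0$, giving a $C^k$ map into $L^2_{0,\eta}$.

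For the spectral correspondence, first apply Proposition~\ref{p: eigenvalue reduction}: for $|\delta|$ small and $|\gamma|<\gamma_0$ (in the range where that reduction applies), bounded solutions of $(B_\mathrm{pl}(\delta)-\lambda K)\u=0$ correspond, through the linear slow-manifold graph, to bounded solutions $\tilde u$ of the reduced scalar equation \eqref{e: pulled scalar eigenvalue reduced}; since the graph is linear and bounded and reconstructs $(\tilde u,\tilde v)$ from $(\tilde u,\tilde u_x)$, this correspondence preserves membership in $H^2_{0,\etalin}$. It therefore suffices to match $H^2_{0,\etalin}$-solutions of \eqref{e: pulled scalar eigenvalue reduced} with nontrivial solutions of \eqref{e: Fstab zero}. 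Given $(w,\beta)\neq 0$ solving \eqref{e: Fstab zero} with $\Re\gamma>0$, set $\tilde u=w+\beta\chi_+ e_+(\cdot;\gamma)$; this solves \eqref{e: pulled scalar eigenvalue reduced}, and since $w\in H^2_{0,\eta}$ with $\eta>\etalin$ while $e^{\etalin x}e_+(x;\gamma)=e^{-\gamma x/d_1}$ decays for $\Re\gamma>0$, we get $\tilde u\in H^2_{0,\etalin}$, so $\lambda=\gamma^2\notin(-\infty,0]=\Sigma_+^{\etalin}$ is an eigenvalue lying to the right of the essential spectrum. Conversely, if $\lambda=\gamma^2$ with $\Re\gamma>0$ is such an eigenvalue, with eigenfunction $\tilde u\in H^2_{0,\etalin}$, then on $x\gg 1$ equation \eqref{e: pulled scalar eigenvalue reduced} is an exponentially small perturbation of the constant-coefficient equation whose characteristic roots are $\nu_\pm(\gamma)=\nulin\pm\gamma/d_1$ (cf.\ \eqref{e: dispersion formula}); relative to the weight $e^{\etalin x}$ the two modes separate as $e^{\mp\gamma x/d_1}$, so a standard exponential-dichotomy/stable-manifold argument near $x=+\infty$ forces $\tilde u(x)=\beta e^{\nu_-(\gamma)x}(1+o(1))$ for some $\beta\in\C$, with remainder decaying at a strictly faster exponential rate; hence $w:=\tilde u-\beta\chi_+ e_+(\cdot;\gamma)\in H^2_{0,\eta}$ for $\tilde\eta$ small and $(w,\beta)$ solves \eqref{e: Fstab zero} nontrivially.

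The main obstacle is the uniform weighted bookkeeping in the well-definedness step: one must simultaneously arrange $\tilde\eta<\etalin$ (so the weight does not defeat the decay of $\upl$ in the far field) and $\gamma_0<d_1(\etalin-\tilde\eta)$ (so that after weighting the $e_+$-mode still decays), and this same margin must also absorb the polynomial prefactors generated by $\gamma$-derivatives in the $C^k$ claim; this couples the two small parameters of the lemma. The remainder is routine: reduction to the scalar problem is a direct application of Proposition~\ref{p: eigenvalue reduction}, and the only analytic input beyond it is the classical asymptotics of solutions near the regular singular point at $x=+\infty$, which is clean here precisely because the limiting operator is exactly the constant-coefficient one of Corollary~\ref{c: leading edge eigenfunction}.
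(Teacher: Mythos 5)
Your argument is correct and, for the mapping properties, follows the same route as the paper: well-definedness rests on the exponential convergence of $\upl$ to $0$ in the leading edge combined with the fact that $e_+(\cdot;\gamma)$ solves the limiting constant-coefficient problem (Corollary~\ref{c: leading edge eigenfunction}), so that only the residual $\chi_+[(L_2-\clin)\partial_x e_+ + (L_1-1)e_+]$ plus compactly supported commutators survive; regularity in $(\gamma,\delta)$ comes from Proposition~\ref{p: eigenvalue reduction}. Your explicit bookkeeping of the coupled smallness conditions $\tilde\eta<\etalin$ and $\gamma_0$ small relative to $\etalin-\tilde\eta$ is exactly the content the paper's one-sentence proof leaves implicit, and it is correct. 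Where you genuinely diverge is the spectral equivalence: the paper disposes of this by citing \cite[Section~5]{PoganScheel}, whereas you give a self-contained argument via the Fenichel reduction plus an exponential-dichotomy analysis near $x=+\infty$ that isolates the $\nu_-(\gamma)$ mode; this buys transparency at the cost of having to justify the classical asymptotics of solutions of the asymptotically constant-coefficient scalar equation, which is standard. One point you should add in the forward direction: from a nontrivial pair $(w,\beta)$ solving \eqref{e: Fstab zero} you must still rule out $\tilde u = w+\beta\chi_+e_+(\cdot;\gamma)\equiv 0$, which is possible precisely when $\chi_+e_+(\cdot;\gamma)$ itself lies in the core space $H^2_{0,\eta}$ (i.e.\ when $\Re\gamma$ exceeds a multiple of $\tilde\eta$). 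This degeneracy is excluded by shrinking $\gamma_0$ relative to the fixed $\tilde\eta$ so that $\chi_+e_+(\cdot;\gamma)\notin H^2_{0,\eta}$ for all $\gamma\in B(0,\gamma_0)$, whence $w=-\beta\chi_+e_+$ is inadmissible unless $\beta=0$; with that one additional constraint on $\gamma_0$ your proof is complete.
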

\begin{proof}
    That $F_\mathrm{stab}$ preserves exponential localization follows from exponential convergence of $\upl(x)$ to $0$ as $x \to \infty$ together with the fact that $e_+(x;\gamma^2)$ solves the eigenvalue problem in the leading edge by Corollary \ref{c: leading edge eigenfunction}. Regularity in $\gamma$ and $\delta$ follows from Proposition \ref{p: eigenvalue reduction}. Equivalence to the original eigenvalue problem follows as in \cite[Section 5]{PoganScheel}. 
\end{proof}

\begin{proposition}\label{p: pulled evans function}
    Let $\gamma_0$ be as in Lemma \ref{l: pulled eigenvalue well defined}. There exists a function $E: B(0,\gamma_0) \times (-\delta_0, \delta_0) \to \C$, which is $C^k$ in both arguments, such that: 
    \begin{itemize}
        \item $B_\mathrm{pl} (\delta) : H^2_{0, \etalin} (\R, \C^2) \to L^2_{0, \etalin} (\R, \C^2)$ has an eigenvalue $\lambda = \gamma^2$ to the right of its essential spectrum if and only if $E(\gamma, \delta) = 0$ with $\Re \gamma > 0$. 
        \item The equation $B_\mathrm{pl}(\delta) u = 0$ has a solution with $\omega_{0, \etalin} u$ bounded if and only if $E(0, \delta) = 0$. 
    \end{itemize}
\end{proposition}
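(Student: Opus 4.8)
The plan is to construct the function $E(\gamma, \delta)$ as a Lyapunov--Schmidt reduction of the equation $F_\mathrm{stab}(w, \beta; \gamma, \delta) = 0$ from Lemma \ref{l: pulled eigenvalue well defined}, exactly as in the Evans-function-type constructions of \cite{AveryGarenaux, CAMS, avery22}. First I would examine the linearization of $F_\mathrm{stab}$ in $(w, \beta)$ at the reference solution $(w, \beta; \gamma, \delta) = (0, 0; 0, 0)$. Since at $\gamma = 0$ the far-field term $\chi_+ e_+(x; 0) = \chi_+$ captures the neutral mode in the leading edge, and at $\delta = 0$ the reduced operator is $\mcl_\mathrm{pl}^0$ in the quasi-conjugated form, the core operator $D_w F_\mathrm{stab}(0, 0; 0, 0)$ should agree with $\mca_\mathrm{pl}^0$ acting $H^2_{0, \eta} \to L^2_{0, \eta}$ with $\eta = \etalin + \tilde\eta$. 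By the last bullet of Lemma \ref{l: pme properties pulled}, this operator is Fredholm of index $-1$ with trivial kernel and one-dimensional cokernel; adjoining the scalar unknown $\beta$ via the far-field ansatz makes the full linearization $D_{(w,\beta)} F_\mathrm{stab}(0,0;0,0) : H^2_{0,\eta} \times \C \to L^2_{0,\eta}$ Fredholm of index $0$.

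The key step is then to show this bordered operator is in fact invertible --- equivalently, that $\beta \mapsto D_{(w,\beta)}F_\mathrm{stab}(0,0;0,0)(0,\beta)$ maps onto a complement of the range of $\mca_\mathrm{pl}^0$. Concretely, one checks that $\langle P_{\mathrm{cok}}\,[d_1 \partial_x^2 + \clin \partial_x + 1]\chi_+, \,\tilde\phi_\mathrm{pl}\rangle \neq 0$, where $\tilde\phi_\mathrm{pl}$ spans the cokernel. As in the transition analysis of Proposition \ref{p: generic transition}, $[d_1\partial_x^2 + \clin\partial_x + 1]\chi_+$ is exponentially localized except for a boundary term at $x \to +\infty$ coming from the leading $d_1\partial_x^2$ term acting on the non-localized tail, which after conjugation produces a nonzero multiple of $\lim_{x\to\infty}\tilde\phi_\mathrm{pl}(x)$; this limit is nonzero by the generic asymptotics $\upl^0 \sim (ax+b)e^{-\etalin x}$ with $a > 0$ from Lemma \ref{l: pme properties pulled}, just as in Lemma \ref{l: farfield limit}. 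Given invertibility, the implicit function theorem (using that $F_\mathrm{stab}$ is linear in $(w,\beta)$ and $C^k$ in $(\gamma,\delta)$ by Lemma \ref{l: pulled eigenvalue well defined}) yields, for $(\gamma,\delta)$ in a small ball, a unique solution $(w(\gamma,\delta), \beta(\gamma,\delta))$ to the projection of $F_\mathrm{stab} = 0$ onto the range of $\mca_\mathrm{pl}^0$, depending $C^k$ on $(\gamma,\delta)$; I then define $E(\gamma,\delta)$ to be the remaining scalar, $E(\gamma,\delta) := \langle F_\mathrm{stab}(w(\gamma,\delta), \beta(\gamma,\delta); \gamma,\delta), \tilde\phi_\mathrm{pl}\rangle$. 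By construction $F_\mathrm{stab}(w, \beta; \gamma, \delta) = 0$ has a solution if and only if $E(\gamma, \delta) = 0$, and the two bulleted equivalences follow from Lemma \ref{l: pulled eigenvalue well defined} (first bullet, for $\Re\gamma > 0$, noting $\lambda = \gamma^2$ is to the right of the essential spectrum precisely when $\Re\gamma > 0$) and from the characterization of bounded solutions at $\lambda = 0$ in terms of $\gamma = 0$ (second bullet).

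The main obstacle I anticipate is not the Lyapunov--Schmidt machinery itself but verifying the two technical points that make it go through uniformly: first, that the $\delta$-dependent reduced operator genuinely limits to $\mca_\mathrm{pl}^0$ in the right functional-analytic sense (Fredholm properties and the cokernel element vary continuously), which relies on Proposition \ref{p: eigenvalue reduction} and the $C^k$ dependence it provides; and second, the nonvanishing of the boundary pairing $\langle P_{\mathrm{cok}}[d_1\partial_x^2 + \clin\partial_x + 1]\chi_+, \tilde\phi_\mathrm{pl}\rangle$, which requires knowing the precise far-field limit of the conjugated cokernel element and, crucially, that the coefficient $a$ in the leading-edge asymptotics of $\upl^0$ is nonzero --- this is exactly the genericity input recorded in Lemma \ref{l: pme properties pulled} and is what distinguishes the pulled regime $d_1 > \tfrac12$ from the transition. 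Once these are in hand, $E$ is the sought-after function and the proposition follows; I would remark that $E$ plays the role of a reduced Evans function and that its sign/zero structure near $\gamma = 0$ will be analyzed in the subsequent subsection to conclude Proposition \ref{p: pulled stability away from transition}.
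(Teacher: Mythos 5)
Your overall skeleton --- Lyapunov--Schmidt reduction of $F_\mathrm{stab}=0$, projecting onto the range of $\mca_\mathrm{pl}^0$ and defining $E$ as the pairing with the cokernel element --- is exactly the paper's approach. However, there are two concrete problems with the execution. First, you make invertibility of the bordered operator $D_{(w,\beta)}F_\mathrm{stab}(0,0;0,0)$ the ``key step,'' to be verified via the nonvanishing of the boundary pairing $\langle [d_1\partial_x^2+\clin\partial_x+1]\chi_+,\tilde\phi_\mathrm{pl}\rangle$. This is not needed to construct $E$, and it is logically misplaced: that pairing is (up to the localized correction $\tilde w$) precisely $E(0,0)$ itself, whose nonvanishing is the content of the \emph{next} result (Proposition \ref{p: pulled stability away from transition}, where it is deduced from the absence of a bounded solution to $\mcl_\mathrm{pl}^0 u=0$, Lemma \ref{l: pme properties pulled}) and which genuinely fails at the pushed-pulled transition. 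If the construction of $E$ required the bordered operator to be invertible, $E$ could never vanish near the origin and the Evans function would carry no information. The paper only needs invertibility of $P_0\mca_\mathrm{pl}^0$ acting on the core variable $w$, which follows immediately from trivial kernel plus the definition of $P_0$ as the projection onto the range.

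Second, and more seriously, your implicit function theorem step is ill-posed as stated. The equation $F_\mathrm{stab}(w,\beta;\gamma,\delta)=0$ is linear and homogeneous in $(w,\beta)$, so ``the unique solution $(w(\gamma,\delta),\beta(\gamma,\delta))$'' to the projected equation obtained by IFT centered at $(0,0)$ is the trivial solution, which would give $E\equiv 0$; moreover, the projected equation alone is underdetermined in $(w,\beta)$ (Fredholm index $+1$), so IFT cannot single out both unknowns. The correct procedure is to solve the projected equation for $w$ only, as a function of the amplitude $\beta$ and the parameters; by linearity $w=\beta\,\tilde w(\gamma,\delta)$ for a fixed $\tilde w(\gamma,\delta)\in H^2_{0,\eta}$, and one then defines
\begin{align*}
E(\gamma,\delta)=\bigl\langle [d_1\partial_x^2+L_2\partial_x+L_1-\gamma^2]\,[\tilde w(\gamma,\delta)+\chi_+e_+(\cdot;\gamma)],\ \phi_\mathrm{pl}\bigr\rangle,
\end{align*}
so that a nontrivial bounded solution exists iff $E(\gamma,\delta)=0$. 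With these two corrections your argument coincides with the paper's; the equivalences in the two bullets then follow from Lemma \ref{l: pulled eigenvalue well defined} as you indicate.
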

\begin{proof}
    It follows from \eqref{e: eigenvalue reduced scalar delta 0} that $D_w F_\mathrm{stab} (0, 0; 0, 0) = \mca_\mathrm{pl}^0$, and recall from Lemma \ref{l: pme properties pulled} that $\mca_\mathrm{pl}^0 : H^2_{0, \eta} \to L^2_{0, \eta}$ is Fredholm with index -1, trivial kernel, and one-dimensional co-kernel spanned by $\phi_\mathrm{pl}$. Let $P_0$ denote the $L^2$-orthogonal projection onto the range of $\mca_\mathrm{pl}^0 : H^2_{0, \eta} \to L^2_{0, \eta}$, and decompose the equation $0 = F_\mathrm{stab}(w, \beta; \gamma, \delta)$ as
\begin{align}
    \begin{cases}
        P_0 [d_1 \partial_x^2 + L_2(\upl, \partial_x \upl, \gamma^2; \delta) \partial_x + L_1 (\upl, \partial_x \upl, \gamma^2; \delta) - \gamma^2 ] [w + \beta \chi_+ e_+(\cdot; \gamma)] &= 0\\
        \langle [d_1 \partial_x^2 + L_2(\upl, \partial_x \upl, \gamma^2; \delta) \partial_x + L_1 (\upl, \partial_x \upl, \gamma^2; \delta) - \gamma^2 ] [w + \beta \chi_+ e_+(\cdot; \gamma)], \phi_\mathrm{pl} \rangle &= 0. 
    \end{cases} \label{e: pulled eigenvalue LS}
\end{align}

The system \eqref{e: pulled eigenvalue LS} has a trivial solution $(w, \beta; \gamma, \delta) = (0, 0; 0, 0)$. The linearization of the first equation with respect to $w$ is $P_0 \mathcal{A}_\mathrm{pl}^0$, which is invertible by construction, since the operator $\mathcal{A}_\mathrm{pl}^0$ has no kernel on this space, and $P_0$ projects onto its range. Hence, we may solve the first equation for $w(\beta; \gamma, \delta)$ as a function of the other parameters via the implicit function theorem. In fact, since the equation is also linear in $\beta$, this solution must by linear in $\beta$, so we write $w(\beta; \gamma, \delta) = \beta \tilde{w}(\gamma, \delta)$ for some $\tilde{w}(\gamma, \delta) \in H^2_{0, \eta}$. Inserting this into the second equation, we find the desired reduced equation
\begin{align*}
    E(\gamma, \delta) = 0,
\end{align*}
where
\begin{align*}
    E(\gamma, \delta) := \langle [d_1 \partial_x^2 + L_2(\upl, \partial_x \upl, \gamma^2; \delta) \partial_x + L_1 (\upl, \partial_x \upl, \gamma^2; \delta) - \gamma^2 ] [\tilde{w}(\gamma,\delta) +  \chi_+ e_+(\cdot; \gamma)], \phi_\mathrm{pl} \rangle .
\end{align*}
\end{proof}

\begin{proof}[Proof of Proposition \ref{p: pulled stability away from transition}]
    It follows from Lemma \ref{l: pme properties pulled} and Proposition \ref{p: pulled evans function} that $E(0,0) \neq 0$. Since $E$ is continuous in both arguments, $E(\gamma, \delta) \neq 0$ for $\gamma, \delta$ sufficiently small, so there are no eigenvalues in a neighborhood of the origin. Large eigenvalues are excluded by Proposition \ref{p: large lambda stability}, and eigenvalues away from the origin in a bounded region may be handled by regular perturbation arguments, as in Proposition \ref{p: pushed stability away from transition}. 
\end{proof}

\subsection{Proof of Theorem \ref{t: main}}
\begin{proof}
    By Corollaries \ref{c: pulled persistence near transition} and \ref{c: pulled marginal stability near transition}, there exists $\eps > 0$ and $\delta_0 > 0$, such that for $|\delta| < \delta_0$ and $d_1 \in (d_1^*(\delta) - \eps, d_1^*(\delta)$, we find marginally stable pulled fronts bifurcating from the pushed-pulled transition at $d_1^*(\delta)$. Now we can apply Proposition \ref{p: pulled persistence} to find pulled fronts, marginally stable by Proposition \ref{p: pulled stability away from transition}, for $d_1 \in [\underline{d}, d_1^*(\delta) - \eps]$ for all $|\delta| < \tilde{\delta_1}$, where $\tilde{\delta}_1$ can be chosen independent of $d_1$, since we are focused on a compact region which is uniformly away from the pushed-pulled transition. Similarly, by Proposition \ref{p: pushed persistence near transition} we find marginally stable pushed fronts bifurcating from the pushed-pulled transition for all $|\delta| < \delta_0$ and $d_1 \in (d_1^*(\delta), d_1^*(\delta) + \eps)$. Then, by Propositions \ref{p: pushed persistence} and \ref{p: pushed stability away from transition}, we find marginally stable pushed fronts for $d_1 \in [d_1^*(\delta) + \eps, \overline{d}]$ for all $|\delta| < \tilde{\delta_2}$, where $\tilde{\delta}_2$ can be chosen independent of $d_1$. By Proposition \ref{p: generic transition}, the pushed-pulled transition is generic in the sense of \cite{avery22}, as desired. 
\end{proof}

\section{Numerical Continuation Results }\label{sec:numerics}
We obtained expansions for the speed of pushed fronts and the location of the pushed-to-pulled transition as one perturbs away from the the porous medium limit in Theorem~\ref{t: main}.  In this section, we compare these estimates to values obtained via a numerical approximation.  We employ a numerical continuation routine recently proposed by the authors in \cite{avery22}.  The method approximates traveling front solutions by solving a boundary value problem making use of a far-field core decomposition.  The far-field portion encodes the exact decay rate of the front in the linearization near the unstable state while the core is a localized portion with stronger decay rates that captures the nonlinear corrections to the front profile.  As explained in \cite{avery22}, the method can approximate pulled fronts, pushed fronts, and the transition with errors that decrease exponentially in the domain size (as opposed to the algebraic errors for pulled fronts that arise if one were to solve a boundary value problem with, for example, Dirichlet boundary conditions on either side of the interval).  Since the asymptotic decay rates are explicitly included in the decomposition, the transition between pushed and pulled fronts can be located by finding parameters values for which this decay rate is purely exponential, i.e. when the parameter $a=0$ in (\ref{e: pulled ansatz}).  For more details see \cite{avery22}.

 Theorem~\ref{t: main} obtains first order corrections to the pushed front speed and the pushed-to-pulled transition point as $\delta$ is perturbed from zero.  In Figure~\ref{fig:numerics} we compare these predictions to quantities obtained from numerical continuation as described above.  The numerical continuations are performed using fourth-order discretizations of the Laplacian on a discretized spatial domain $[-L,L]$ with $L=20$ and $dx=0.1$.  The chemotactic term in the first equation of (\ref{eq:main}) is expanded and we use the second equation to replace the term $\chi u v_{xx}$ with $\chi u (v-u)/\sigma$.

\paragraph{Pushed front speeds.}
For pushed fronts, we find good approximations to the speeds for $\e$ near the critical point at $d_1=0.5$.  The predictions are less accurate for smaller values of $d_1$.  It turns out that the coefficient $c_{\mathrm{ps},2}$ is quite small and so one explanation of this deviation is that the $\mathcal{O}(\delta^4)$ in (\ref{e: cps2}) could have a non-trivial influence for $\delta^2$ on the order of $0.01$ to $0.1$.  One interesting feature that is observed is the non-uniform effect of increasing $\delta$ on the speed of the traveling front.  For $\e$ close to $0.5$, increasing $\delta$ leads to faster invasion speeds as compared to the porous medium limit while smaller values of $d_1$ lead to a decrease in the relative invasion speed.  

\paragraph{Pushed-to-pulled transition.}

Expansions for the pushed-to-pulled transition point are given in (\ref{e:d12}).  We compute the location of the pushed-to-pulled transition using numerical continuation techniques and compare them to the linear approximation $\e=\frac{1}{2}+d_{1,2}\delta^2$.  This linear approximation provides are remarkably good fit for $\delta^2$ less than approximately $0.53$.  At this value of $\delta^2$  the linearization of (\ref{eq:main}) has a spatial eigenvalue with algebraic multiplicity three and our numerical continuation routine is unable to continue through this resonance. 
For $\delta^2$ greater than this critical value the transition point is no longer approximately linear.  

A quadratic fit to the computed values of $d_1(\delta)$ in the range $0<\delta<6\cdot 10^{-3}$ gives  the approximation 
\begin{equation}\label{e:qfit}
d_1(\delta)\sim 0.49999976+   0.064829368124163 \delta^2+  0.0077592 \delta^4 .
\end{equation}
Comparing with the prediction, we find an error in the constant term of order $10^{-7}$ and a relative error in the linear term of order $5\cdot 10^{-5}$, consistent with discretization accuracy. 

The numerically computed pushed-pulled transition curve $d_1^*(\delta)$, shown in blue in the right panel of Figure \ref{fig:numerics}, appears strikingly close to linear in $\delta^2$ for $\delta^2 \leq 0.5$. To investigate whether the curve is truly linear, we computed the local slope of the curve as $\delta^2$ ranges from $0.1$ to $0.4$, and found that the slope changed by approximately three percent from $\delta^2 = 0.1$ to $\delta^2 = 0.4$. This change was robust to decreasing $dx$ and increasing $L$, suggesting that the curve $d_1^*(\delta)$ is genuinely nonlinear in $\delta^2$, but with a very small leading order nonlinear term. Note that the coefficient of the correction $\delta^4$ in \eqref{e:qfit} is small but does not vanish, also indicating that the apparent linear dependence of $d_1$ on $\delta^2$ is a good approximation yet not exact according to the numerical data.

\begin{figure}
    \centering
     \subfigure{\includegraphics[width=0.43\textwidth]{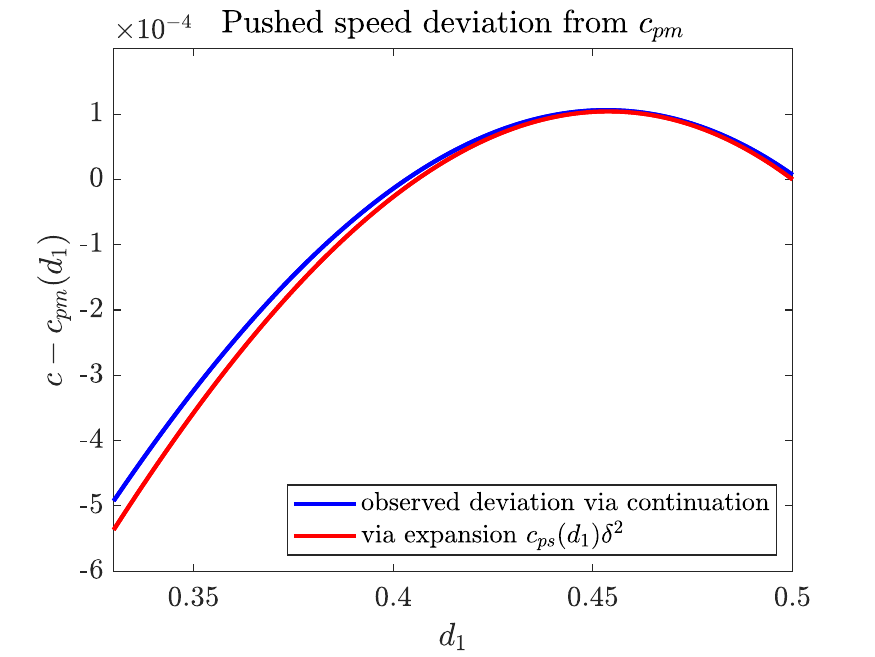}}
 \subfigure{\includegraphics[width=0.43\textwidth]{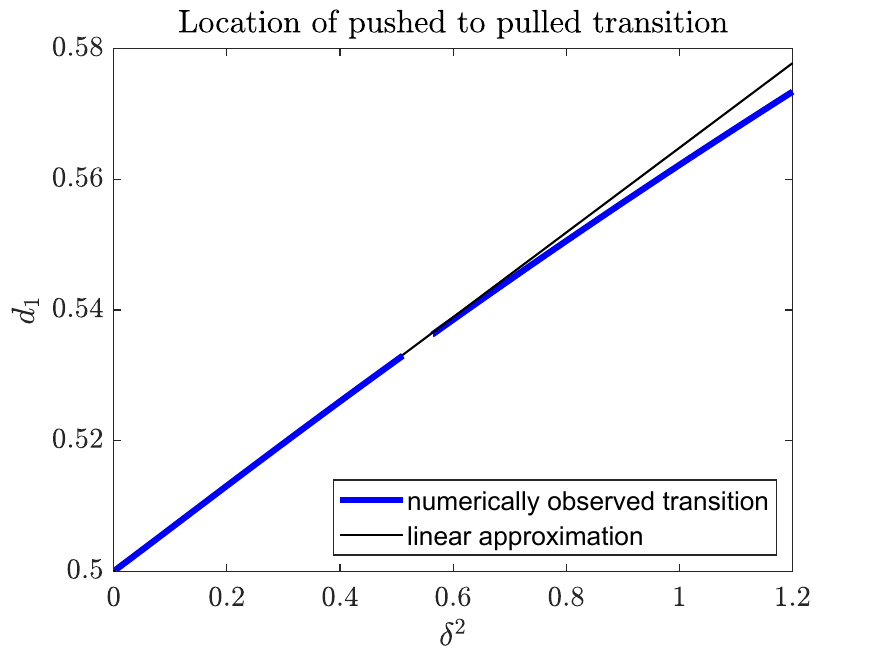}}
   \caption{On the left, we compare pushed front speeds with $\delta^2=0.1$ with the expansion obtained in (\ref{e: cps2}). Specifically, we plot the deviation of the pushed speed from the porous medium limit speed $c_{pm}(\e)$ determined by numerical continuation using the approach described in \cite{avery22} and via the expansion (\ref{e: cps2}).  On the right we show the location in $\delta^2$-$\e$ parameter space of the pushed to pulled transition determined by the same continuation method. A linear prediction for the location of the expansion curve provided by (\ref{e:d12}) is shown in black and appears to be a reliable estimate for the actual transition value beyond the limit of small $\delta$.   We remark that the gap in the data appearing near $0.53$ is due to a resonance in the eigenvalues that prevents our algorithm from continuing through that point.     }
    \label{fig:numerics}
\end{figure}

\appendix

\section{Front solutions in the porous medium limit}\label{a: pme limit}

\paragraph{Proof of Lemma~\ref{lem:PME}}

We follow the approach presented in \cite{kawasaki17}. 
Write (\ref{e: pme}) as a system of first-order equations yielding
\begin{eqnarray}
U'&=& W \nonumber \\
W'&=& -\frac{cW+W^2+U(1-U)}{\e+U}. \label{eq:reducedslow}
\end{eqnarray}
Then change coordinates via 
\[ \tilde{W}=(\e+U)W, \ \tilde{U}=U, \ \tilde{x}=\int_0^x \frac{1}{\e+U(\tau)}d\tau. \]
This transforms (\ref{eq:reducedslow}) to the system
\begin{eqnarray}
\frac{d\tilde{U}}{d\tilde{x}}&=& \tilde{W} \nonumber  \\
\frac{d\tilde{W}}{d\tilde{x}}&=& -c\tilde{W}-\tilde{U}(1-\tilde{U})(\e+\tilde{U}). \label{eq:rescaledtonagumo}
\end{eqnarray}
This is Nagumo's equation -- studied in \cite{hadeler75} -- for which the selected speed has been established to be
\be \cpm(\e)=\left\{ \begin{array}{cc} \frac{1}{\sqrt{2}}+\sqrt{2}\e & \e<\frac{1}{2} \\ 2\sqrt{\e} & \e\geq \frac{1}{2}\end{array} \right.. \ee
Thus we have established Lemma~\ref{lem:PME}

\paragraph{Front asymptotics for (\ref{e: pme})}  Lemma~\ref{l: pme properties pulled} and Lemma~\ref{l: pme properties pushed} require asymptotics for the front profile near the unstable zero state.  

\paragraph{Pushed front asymptotics} Equation (\ref{eq:rescaledtonagumo}) has a family of exact solutions lying on the quadratic curve $\tilde{W}=-\alpha \tilde{U}\left(1-\tilde{U}\right)$.  To compute for $\alpha$ and $c$, one can substitute and obtain the invariance condition 
\be \alpha^2-2\alpha c +\e-2\alpha^2\tilde{U}+\tilde{U}=0\label{eq:alphadet} \ee
from which 
$\alpha=\sqrt{\frac{1}{2}}$ and $c$ satisfying $\alpha^2-c\alpha+\e=0$; or $c=\frac{1}{\sqrt{2}}+\sqrt{2}\e$.  These heteroclinics correspond to traveling front solutions of (\ref{eq:rescaledtonagumo}) which have the explicit form,
\be \tilde{U}(\tilde{x})= \frac{e^{-\frac{\tilde{x}}{\sqrt{2}}}}{1+e^{-\frac{\tilde{x}}{\sqrt{2}}}}. \label{eq:explicitsoln} \ee
\begin{rmk} Alternatively, we notice that for these solutions 
\[
U'=W=\frac{\tilde{W}}{d_1+U}=-\frac{U(1-U)}{\sqrt{2}(d_1+U)}, 
\]
which gives the explicit expression for the inverse $\psi(y)$ 
\begin{equation}\label{e:fexpl}
    \psi(y)=\sqrt{2} ((1 + d_1) \log(1 - y) - d_1 \log(y)),\qquad \psi(U(x))=x.
\end{equation}
\end{rmk}
It is important to note that only when $\e<\frac{1}{2}$ is this front a selected, pushed front.  For $\e>\frac{1}{2}$ the front has weak exponential decay and belongs to the class of super-critical fronts which are not selected by compactly supported initial data.  When $\e<\frac{1}{2}$ the selected pushed front has decay rate 
\be \tilde{U}(\tilde{x})\sim C e^{-\frac{\tilde{x}}{\sqrt{2}}}. \ee
Note that $\tilde{x}\sim \frac{x}{\e}$ as $x\to\infty$ from which the decay rate in Lemma~\ref{l: pme properties pushed} is obtained.

\paragraph{Pulled front asymptotics}
When $\e\geq\frac{1}{2}$ the invasion fronts are pulled.  Our primary goal in this section is to verify the expansion of the front in the leading edge stated in (\ref{e:pmeanonzero}) where it is claimed that the  coefficient $a$ is positive.  Validation of this will be accomplished using a comparison argument after two changes of coordinates to simplify the analysis.  The first change of coordinates transforms (\ref{eq:rescaledtonagumo}) to projective coordinates.  These coordinates will be employed  to distinguish pure exponential decay (with $a=0$)  from weak exponential decay due to the algebraic pre-factor (with $a\neq 0$).

To begin, let $\eta=\frac{\tilde{W}}{\tilde{U}}$ after which (\ref{eq:rescaledtonagumo}) is transformed to (with $c = \clin =2\sqrt{\e}$) 
\begin{eqnarray} 
\frac{d\tilde{U}}{d\tilde{x}} &=& \eta \tilde{U} \nonumber \\ 
\frac{d\eta}{d\tilde{x}}&=& -\clin \eta -\eta^2- (1-\tilde{U})\left(\e+\tilde{U}\right) \label{eq:projective}
\end{eqnarray}
This system of equations has a fixed point at $(\tilde{U},\eta)=(0,-\sqrt{\e})$.  This fixed point is non-hyperbolic with one zero eigenvalue and one negative eigenvalue $-\sqrt{\e}$.  The center manifold can be taken to be the $\eta$ axis and there exists a one dimensional stable manifold tangent to the stable eigenvector $(\sqrt{\e},1-\e)$.

It is more convenient to re-scale (\ref{eq:projective}) so that $\eta=\sqrt{\e}z$. Also rescaling the independent variable by a factor of $\sqrt{d_1}$, and recalling that $\clin = 2 \sqrt{d_1}$, we obtain the system
\begin{eqnarray} 
\tilde{U}'&=& z \tilde{U} \nonumber  \\
z'&=& -(z+1)^2 +\tilde{U}-\frac{1}{\e}\tilde{U}(1-\tilde{U}). \label{eq:zU}
\end{eqnarray}
This system has fixed points at $(0,-1)$ and $(1,0)$ for all $\e>0$.  Due to the explicit front solution (\ref{eq:explicitsoln}) we know that the stable manifold of $(0,-1)$ intersects the unstable manifold of $(1,0)$ at $\e=\frac{1}{2}$.  The fixed point $(0,-1)$ has stable eigenvalue $-1$ with eigenvector $\left(1,\frac{1}{\e}-1\right)$. The fixed point at $(1,0)$ is hyperbolic with eigenvalues $-1\pm\sqrt{2+\frac{1}{\e}}$.  The unstable eigenvector is proportional to $\left(-1,1-\sqrt{2+\frac{1}{\e}}\right)$. 

Let $h_s(\tilde{U},\e)$ be the graph of the stable manifold of $(0,-1)$ and let $h_u(\tilde{U},\e)$ be the graph of the unstable manifold of $(1,0)$.  Based upon the eigenvectors, we know that for $0<U$ and sufficiently small that $h_s(\tilde{U},\e)$ is a monotone decreasing function of $\e$.  Similarly, for $0<U<1$ sufficiently close to $1$ it holds that $h_u(\tilde{U},\e)$ is monotone increasing in $\e$.

Let $\phi(\tilde{U})$ be the graph of the heteroclinic orbit connecting $(0,-1)$ and $(1,0)$ with $\e=\frac{1}{2}$.   Suppose for the sake of contradiction we assume the existence of a heteroclinic connection for $\e\neq \frac{1}{2}$.  Let $\psi(\tilde{U},\e)$ be the graph of this heteroclinic. Fix $\e>\frac{1}{2}$.  Then it follows from the properties of the stable and unstable manifolds of these fixed points that there exists a $\tilde{U}_1$ sufficiently small so that 
\[ \phi(\tilde{U}_1)>\psi(\tilde{U}_1,\e),  \]
and a $\tilde{U}_2$ close to $1$ such that 
\[ \phi(\tilde{U}_2)<\psi(\tilde{U}_2,\e).  \]
Then, in order for there to exist a heteroclinic orbit for $\e$ it must be that there exists a $\tilde{U}_1<\tilde{U}_*<\tilde{U}_2$ such that 
\be \phi(\tilde{U}_*)=\psi(\tilde{U}_*,\e)=z^*, \quad \phi'(\tilde{U}_*)<\psi'(\tilde{U}_*,\e). \label{eq:derivativecond} \ee
Compute the derivative 
\be \frac{dz}{dU}=-\frac{(z+1)^2}{zU}+\frac{1}{z}-\frac{1}{\e}\frac{1-U}{z}. \label{eq:phi'} \ee
Then the derivative condition in (\ref{eq:derivativecond}) requires  
\[ -\frac{1}{\e z^*}(1-U^*)> -\frac{2}{ z^*}(1-U^*), \]
which, after recalling that $z^*<0$, only holds if $\e<\frac{1}{2}$.  

As a consequence there can exist no heteroclinic connection for $\e>\frac{1}{2}$.  Tracking the unstable manifold of $(1,0)$ forward in $\tilde{x}$ we see that it can not cross the curve $\phi(\tilde{U})$. This means that $z(\tilde{x})>-1$ and after untangling the changes of coordinates we have that 
\[ \frac{\tilde{U}'}{\tilde{U}}\sim -\sqrt{\e}+\frac{\sqrt{d_1}}{\tilde{x}+\kappa}, \]
for some $\kappa>0$ from which 
we obtain that $\tilde{U}(\tilde{x})\sim (a\tilde{x}+b)e^{-\sqrt{d_1}\tilde{x}}$ with $a>0$ as claimed.    

\section{Spectral stability for large $|\lambda|$}\label{s: large lambda stability}

Recall the formulation \eqref{e: eigenvalue sys} of the eigenvalue problem as a first-order system,
\begin{align}
    \tilde{u}_x &= \tilde{w} \nonumber \\
    \tilde{w}_x &= -\frac{1}{d_1} D \Gamma(U, W, H, Z) (\tilde{u}, \tilde{w}, \tilde{h}, \tilde{z})^T + \frac{1}{d_1} \lambda \tilde{u} \nonumber \\
    \delta \tilde{h}_x &= \tilde{z} \nonumber \\
    \delta \tilde{z}_x &= \tilde{h} + \frac{1}{d_1} D\Gamma(U,W,H,Z) (\tilde{u}, \tilde{w}, \tilde{h}, \tilde{z})^T - \frac{1}{d_1} \lambda \tilde{u}, \label{e: app slow sys}
\end{align}
where $(U(x),W(x),H(x),Z(x))$ denotes any heteroclinic solution to \eqref{e: tw sys} satisfying $0 < U < 1$. Defining the rescaled (spatial) time $y = \frac{x}{\delta}$, we find the \emph{fast system}, 
\begin{align}
    \tilde{u}_y &= \delta \tilde{w} \nonumber \\
    \tilde{w}_y &= - \frac{\delta}{d_1} D(U(\delta y), W(\delta y), H(\delta y), Z(\delta y)) + \frac{\delta}{d_1} \lambda \tilde{u} \nonumber \\
    \tilde{h}_y &= \tilde{z} \nonumber \\ 
    \tilde{z}_y &= \tilde{h} + \frac{1}{d_1} D\Gamma(U,W,H,Z) (\tilde{u}, \tilde{w}, \tilde{h}, \tilde{z})^T - \frac{1}{d_1} \lambda \tilde{u}, \label{e: app fast sys}
\end{align}
which is equivalent to \eqref{e: app slow sys} for $\delta > 0$. To recognize the leading order dynamics for large $|\lambda|$, we set $\lambda = \frac{1}{\gamma^2}$, rescale time to $\xi = \frac{y}{|\gamma|}$, and set $\hat{w} = |\gamma| \tilde{w}$, finding the equivalent system
\begin{align}
    \tilde{u}_\xi &= \delta \hat{w} \nonumber \\
    \hat{w}_\xi &= \frac{\delta}{d_1} \frac{|\gamma|^2}{\gamma^2} \tilde{u} - \delta \frac{|\gamma|^2}{d_1} D\Gamma (U_2, W_2, H_2, Z_2) \cdot \left(\tilde{u}, \frac{\hat{w}}{|\gamma|}, \tilde{h}, \tilde{z}\right)^T \nonumber \\
    \tilde{h}_\xi &= |\gamma| \tilde{z} \nonumber \\
    \tilde{z}_\xi &= |\gamma| \tilde{h} - \frac{|\gamma|}{\gamma^2} \frac{1}{d_1} \tilde{u} + \frac{|\gamma|}{d_1} D\Gamma(U_2, W_2, H_2, Z_2) \left(\tilde{u},  \frac{\hat{w}}{|\gamma|}, \tilde{h}, \tilde{z}\right)^T,
\end{align}
where $(U_2(\xi), W_2(\xi), H_2 (\xi), Z_2(\xi)) = (U(\delta |\gamma| \xi), W(\delta |\gamma| \xi), H(\delta |\gamma| \xi), Z(\delta |\gamma| \xi))$ are slowly varying if $|\delta \gamma|$ is small. Explicitly evaluating $D\Gamma$ and rescaling $(\tilde{h}, \tilde{z}) = \frac{1}{|\gamma|} \left( \hat{h}, \hat{z} \right)$, we find
\begin{align}
    \tilde{u}_\xi &= \delta \hat{w} \nonumber \\
    \tilde{w}_\xi &= \frac{\delta}{d_1} \frac{|\gamma|^2}{\gamma^2} \tilde{u} - \frac{\delta |\gamma|^2}{d_1} \left[ \tilde{u} (H_2 + 1 - 2U_2) + \frac{\hat{w}}{|\gamma|} (c + 2 W_2 + \delta Z_2) + U_2 \frac{\hat{h}}{|\gamma|} + \delta W_2 \frac{\hat{z}}{|\gamma|}\right] \nonumber \\
    \hat{h}_\xi &= |\gamma| \hat{z} \nonumber \\
    \hat{z}_\xi &= |\gamma| \hat{h} - \frac{|\gamma|^2}{\gamma^2} \frac{1}{d_1} \tilde{u} + \frac{|\gamma|^2}{d_1} \left[ \tilde{u} (H_2 + 1 - 2U_2) + \frac{\hat{w}}{|\gamma|} (c + 2 W_2 + \delta Z_2) + U_2 \frac{\hat{h}}{|\gamma|} + \delta W_2 \frac{\hat{z}}{|\gamma|}\right]. 
\end{align}
We make one more rescaling, defining $(\check{h}, \check{z}) = |\gamma \delta|^{1/2} (\hat{h}, \hat{z})$, so that the system becomes
\begin{align}
\tilde{u}_\xi &= \delta \hat{w}, \nonumber \\
\hat{w}_\xi &= \frac{\delta}{d_1} \frac{|\gamma|^2}{\gamma^2} \tilde{u} - \frac{|\delta \gamma|^{1/2}}{d_1} U_2 \check{h} + \tilde{\Gamma}_1 (U_2, W_2, H_2, Z_2; \gamma, \delta) \cdot (\tilde{u}, \hat{w}, \check{h}, \check{z}), \nonumber \\
\check{h}_\xi &= |\gamma| \check{z} \nonumber \\
\check{z}_\xi &= |\gamma| \left(1 + \frac{U_2}{d_1} \right) \check{h} - \frac{|\gamma|^2}{\gamma^2} \frac{|\gamma \delta|^{1/2}}{d_1} \tilde{u} + \tilde{\Gamma}_2 (U_2, W_2, H_2, Z_2; \gamma, \delta) \cdot (\tilde{u}, \hat{w}, \check{h}, \check{z}), \label{e: app finally rescaled}
\end{align}
where
\begin{align*}
    \tilde{\Gamma}_1(U_2, W_2, H_2, Z_2; \gamma, \delta) &= \begin{pmatrix}
        - \frac{\delta |\gamma|^2}{d_1} (H_2 + 1 - 2 U_2) & - \frac{|\delta \gamma|}{d_1} (c + 2 W_2 + \delta Z_2) & 0 & -\delta |\delta \gamma|^{1/2} W_2
    \end{pmatrix}, \\
    \tilde{\Gamma}_2 (U_2, W_2, H_2, Z_2; \gamma, \delta) &= \begin{pmatrix}
        \frac{|\gamma|^2 |\delta \gamma|^{1/2}}{d_1} (H_2 + 1 - 2 U_2) & \frac{|\gamma| |\delta \gamma|^{1/2}}{d_1} (c + 2 W_2 + \delta Z_2) & 0 & \frac{\delta |\gamma|}{d_1} W_2
    \end{pmatrix}. 
\end{align*}
Note that the principal terms, written explicitly in \eqref{e: app finally rescaled}, all have coefficients on the order $\mathrm{O}(|\gamma|, |\delta|, |\delta \gamma|^{1/2})$, while all terms in $\tilde{\Gamma}_{1/2}$ are higher order. Our goal is now to show that there exists $r_1 > 0$ and $\phi_0 \in (\frac{\pi}{4}, \frac{\pi}{2})$ such that \eqref{e: app finally rescaled} admits no bounded solutions provided
\begin{align}
    |\delta| < r_1, \quad |\gamma| < r_1, \quad |\mathrm{Arg} \, \gamma| < \phi_0. \label{e: app conditions}
\end{align}
We set $\frac{|\gamma|^2}{\gamma^2} = e^{i \theta}$, and first consider the coordinate chart $\delta = \delta_1 |\gamma|$ on the $(\delta, |\gamma|)$-plane, in which we find
\begin{align}
    \tilde{u}_\xi &= \delta_1 |\gamma| \hat{w} \nonumber  \\
    \hat{w}_\xi &= \frac{\delta_1}{d_1} |\gamma| e^{i \theta} \tilde{u} - \frac{|\delta_1|^{1/2}}{d_1} |\gamma| U_2 \check{h} + \mathrm{O}(|\gamma|^2) \cdot (\tilde{u}, \hat{w}, \check{h}, \check{z})^T \nonumber \\
    \check{h}_\xi &= |\gamma| \check{z} \nonumber \\
    \check{z}_\xi &= |\gamma| \left(1 + \frac{U_2}{d_1} \right) \check{h} - e^{i \theta} |\delta_1|^{1/2} \frac{|\gamma|}{d_1} \tilde{u} + \mathrm{O}(|\gamma|^2) \cdot (\tilde{u}, \hat{w}, \check{h}, \check{z})^T. \label{e: app rescaled 1}
\end{align}
Rescaling time to remove the Euler multiplier $|\gamma|$, we find that the eigenvalues of the resulting leading order system are given as roots of the characteristic polynomial
\begin{align}
    0 = \det \begin{pmatrix}
        \frac{\delta_1^2}{d_1} e^{i \theta}-\nu^2 & -\frac{|\delta_1|^{3/2}}{d_1} U_2 \\
        -e^{i\theta} \frac{|\delta_1|^{1/2}}{d_1} & 1 + \frac{U_2}{d_1} - \nu^2  
    \end{pmatrix} =: \det (M - \nu^2 I). \label{e: app characteristic}
\end{align}
\begin{lemma}\label{l: app rescaling 1}
    Fix $d_1 > 0$. For any $\delta_1 > 0$, $0 \leq U_2 \leq 1$, and $\theta$ with $|\theta| < \frac{3 \pi}{4}$, \eqref{e: app characteristic} has no roots $\nu$ which are purely imaginary. 
\end{lemma}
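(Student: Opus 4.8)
The plan is to reduce the question to a scalar polynomial in $\mu := \nu^2$ and then run a sign/argument argument. A purely imaginary $\nu$ is exactly a $\nu$ with $\nu^2 = \mu \in (-\infty,0]$, so it suffices to show that the characteristic polynomial $p(\mu) := \det(M-\mu I)$ has no root $\mu \le 0$, i.e.\ that $M$ has no nonpositive real eigenvalue.

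First I would compute $\operatorname{tr} M$ and $\det M$ explicitly. The trace is $\operatorname{tr} M = \frac{\delta_1^2}{d_1}e^{i\theta} + 1 + \frac{U_2}{d_1}$, and — this is the key observation — the $U_2$-dependence cancels in the determinant: since $|\delta_1|^{3/2}|\delta_1|^{1/2} = \delta_1^2$, one gets $\det M = \frac{\delta_1^2}{d_1}e^{i\theta}\left(1+\frac{U_2}{d_1}\right) - \frac{\delta_1^2 U_2}{d_1^2}e^{i\theta} = \frac{\delta_1^2}{d_1}e^{i\theta}$. Writing $a := \frac{\delta_1^2}{d_1} > 0$ and $b := 1 + \frac{U_2}{d_1} \ge 1$ (here I use $\delta_1 > 0$ and $0 \le U_2 \le 1$), the characteristic polynomial takes the compact form $p(\mu) = \mu^2 - (a e^{i\theta} + b)\mu + a e^{i\theta} = (\mu^2 - b\mu) + a(1-\mu)e^{i\theta}$.

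Next I would analyze $p(\mu)$ for real $\mu \le 0$. For such $\mu$ we have $\mu^2 - b\mu = \mu(\mu-b) \ge 0$ (both factors are $\le 0$), and $a(1-\mu) \ge a > 0$. If $\theta \ne 0$ then, since $|\theta| < \tfrac{3\pi}{4} < \pi$, we have $\sin\theta \ne 0$, hence $\operatorname{Im} p(\mu) = a(1-\mu)\sin\theta \ne 0$ and so $p(\mu) \ne 0$. If $\theta = 0$ then $p(\mu) = (\mu^2 - b\mu) + a(1-\mu) \ge a > 0$, so again $p(\mu) \ne 0$. In either case $p$ has no nonpositive real root, which is precisely the claim.

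There is no genuine obstacle here: the only point requiring attention is verifying the cancellation in $\det M$ and keeping track of the roles of $\delta_1 > 0$ and $0 \le U_2 \le 1$ so that $a > 0$ and $b \ge 1$. The restriction $|\theta| < \tfrac{3\pi}{4}$ enters only to guarantee $\sin\theta \ne 0$ for $\theta \ne 0$ (equivalently $e^{i\theta} \ne -1$), which is exactly what prevents the constant term $a e^{i\theta}$ from producing a negative real root; any weaker bound $|\theta| < \pi$ would in fact suffice.
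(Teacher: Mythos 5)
Your proof is correct and is essentially the paper's argument in a slightly different packaging: the paper sets $\nu=ik$, rearranges the determinant to $e^{i\theta}=-k^2\bigl(1+\tfrac{U_2}{d_1}+k^2\bigr)\big/\bigl(\tfrac{\delta_1^2}{d_1}(1+k^2)\bigr)$ (using the same cancellation of the $U_2$-terms in $\det M$ that you isolate), and observes the right-hand side is a nonpositive real so $\theta$ would have to be $\pi$ — which is exactly your real/imaginary-part split with $\mu=-k^2$. Your remark that $|\theta|<\pi$ suffices is consistent with the paper's conclusion.
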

\begin{proof}
    Suppose we have a root $\nu = ik$ of \eqref{e: app characteristic} which is purely imaginary. After some rearranging, we find
    \begin{align}
        e^{i \theta} = \frac{-k^2 \left( 1 + \frac{U_2}{d_1} + k^2 \right)}{\frac{\delta_1^2}{d_1^2} (1+k^2)},
    \end{align}
    which implies in particular that we must have $\theta = \pi$.  
\end{proof}
\begin{corollary}\label{c: delta 1 bounded}
    Fix $d_1 > 0$. For any $\delta_1^* >  0$, there exists $\gamma_1^* > 0$ such that the system \eqref{e: app rescaled 1} has no bounded solutions with $\delta = \delta_1 |\gamma|$, $\delta_1^* \leq \delta_1 \leq \frac{1}{\delta_1^*}$, $|\gamma| \leq \gamma_1^*$, and $|\mathrm{Arg} \, \gamma| \leq \frac{3 \pi}{8}$. 
\end{corollary}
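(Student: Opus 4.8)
The plan is to show that on the stated parameter range the leading‑order part of \eqref{e: app rescaled 1} is uniformly hyperbolic and varies slowly in $\xi$, so that the full system inherits an exponential dichotomy on all of $\mathbb{R}$ and hence admits no nontrivial bounded solution. Write $\gamma=|\gamma|e^{i\phi}$ with $\phi=\mathrm{Arg}\,\gamma$, so that $|\gamma|^2/\gamma^2=e^{i\theta}$ with $\theta=-2\phi$; the hypothesis $|\mathrm{Arg}\,\gamma|\leq\frac{3\pi}{8}$ is then exactly $|\theta|\leq\frac{3\pi}{4}$, and in particular $\theta\neq\pi$. Rescaling $\xi$ to remove the common Euler factor $|\gamma|$ in \eqref{e: app rescaled 1}, we obtain the equivalent system
\begin{align}
    V_\xi=\bigl(A(\xi;\theta,\delta_1,d_1)+|\gamma|\,R(\xi;\gamma,\delta_1,d_1)\bigr)V,\qquad V=(\tilde u,\hat w,\check h,\check z)^T,
\end{align}
where the entries of $A$ depend on $\xi$ only through $U_2(\xi)=U(\delta_1|\gamma|^2\xi)\in(0,1)$, and where $R$ is uniformly bounded for $\delta_1^*\leq\delta_1\leq 1/\delta_1^*$. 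Since $U$ and $U'=W$ are bounded along the heteroclinic, $A$ is uniformly bounded and $\|\partial_\xi A(\xi)\|=\mathrm{O}(|\gamma|^2)$, uniformly in the parameters.

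First I would record uniform hyperbolicity of the frozen matrices. For fixed $\xi$ the eigenvalues of $A(\xi;\theta,\delta_1,d_1)$ are $\pm\sqrt{\nu^2}$, where $\nu^2$ runs over the two roots of \eqref{e: app characteristic} with $U_2=U_2(\xi)$. The computation in the proof of Lemma~\ref{l: app rescaling 1} shows that a purely imaginary root of \eqref{e: app characteristic} forces $\theta=\pi$; since $|\theta|\leq\frac{3\pi}{4}$, neither root of the $2\times2$ matrix $M$ in \eqref{e: app characteristic} lies in $(-\infty,0]$. The roots of \eqref{e: app characteristic} depend continuously on $(\theta,\delta_1,U_2)$ and are bounded (the entries of $M$ are), so over the compact set $\{|\theta|\leq\frac{3\pi}{4}\}\times[\delta_1^*,1/\delta_1^*]\times[0,1]$ they range over a compact subset of $\mathbb{C}\setminus(-\infty,0]$. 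Hence the eigenvalues $\pm\sqrt{\nu^2}$ of $A(\xi)$ stay in a compact subset of $\{\lambda:|\Re\lambda|\geq\mu_0\}$ for some $\mu_0=\mu_0(\delta_1^*,d_1)>0$, and the dimensions of the stable and unstable eigenspaces of $A(\xi)$ are independent of $\xi$ and of the parameters, the count never jumping through a purely imaginary eigenvalue. In particular the limiting matrices at $\xi\to\pm\infty$ (where $U_2\to0$ and $U_2\to1$) are hyperbolic with the same Morse index.

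Next I would promote this to an exponential dichotomy on $\mathbb{R}$ for the leading‑order equation $V_\xi=A(\xi)V$. Because $A(\xi)$ is uniformly hyperbolic with gap at least $\mu_0$ and slowly varying, $\|\partial_\xi A(\xi)\|=\mathrm{O}(|\gamma|^2)$, a standard slowly‑varying‑coefficient argument (see, e.g., \cite{SandstedeReview}) shows that once $|\gamma|$ is small enough---depending only on $\mu_0$ and the uniform bound on $\|A\|$, hence only on $\delta_1^*$ and $d_1$---this equation has an exponential dichotomy on $\mathbb{R}$ with two‑dimensional stable and unstable subbundles whose fibers at each $\xi$ stay close to the frozen spectral subspaces and are therefore complementary at every point. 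The roughness theorem for exponential dichotomies then absorbs the remaining perturbation $|\gamma|R(\xi)$, whose norm is $\mathrm{O}(|\gamma|)$ uniformly in the parameters, again for $|\gamma|\leq\gamma_1^*$ sufficiently small. A linear system with an exponential dichotomy on $\mathbb{R}$ has no nontrivial bounded solution, since any such solution would lie in the intersection of the complementary stable and unstable fibers at $\xi=0$; this is precisely the assertion of the corollary, with $\gamma_1^*$ depending only on $\delta_1^*$ and $d_1$.

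I expect the main obstacle to be the third step: verifying carefully that ``uniformly frozen‑hyperbolic with consistent Morse index plus slow $\xi$‑dependence'' genuinely yields a dichotomy on the whole line, not merely on the two half‑lines, and quantifying the smallness of $|\gamma|$ this requires uniformly in $\delta_1\in[\delta_1^*,1/\delta_1^*]$ and $|\theta|\leq\frac{3\pi}{4}$. Concretely one can block‑diagonalize $A(\xi)$ by a smooth, uniformly bounded $\xi$‑dependent change of variables (available because the spectral gap is uniform), reducing to a near‑block‑triangular system with uniformly hyperbolic diagonal blocks and $\mathrm{O}(|\gamma|)$ off‑diagonal and remainder terms, and then build the dichotomy by the usual variation‑of‑constants fixed‑point construction. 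The bookkeeping behind the estimates $\|\partial_\xi A\|=\mathrm{O}(|\gamma|^2)$ and $\|R\|=\mathrm{O}(|\gamma|)$, and the compactness argument for the uniform gap $\mu_0$, are routine given Lemma~\ref{l: app rescaling 1}.
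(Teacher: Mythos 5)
Your proposal is correct and follows essentially the same route as the paper: the paper's proof is a one-line appeal to the uniform frozen-coefficient hyperbolicity of Lemma~\ref{l: app rescaling 1} together with the slowly-varying-coefficient dichotomy result \cite[Lemma 2.3]{Sakamoto}, and the nonexistence of bounded solutions in the presence of an exponential dichotomy on $\R$. You have simply unpacked the details of that citation (uniform spectral gap by compactness, $\mathrm{O}(|\gamma|^2)$ variation of the coefficients, roughness for the $\mathrm{O}(|\gamma|)$ remainder), which is consistent with the paper's argument.
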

\begin{proof}
    Since the linear system \eqref{e: app rescaled 1} has slowly varying coefficients, the hyperbolicity captured in Lemma \ref{l: app rescaling 1} implies that \eqref{e: app rescaled 1} admits exponential dichotomies in the desired parameter regime \cite[Lemma 2.3]{Sakamoto}, and the existence of exponential dichotomies rules out the possibility of bounded solutions. 
\end{proof}
\begin{lemma}\label{l: delta 1 small}
    Fix $d_1 > 0$. There exist $\delta_1^\dag, \gamma_1^\dag > 0$ such that \eqref{e: app rescaled 1} admits no bounded solutions with $\delta = \delta_1 |\gamma|, |\delta_1| < \delta_1^\dag, |\gamma| \leq \gamma_1^\dag$, and $|\mathrm{Arg} \, \gamma| \leq \frac{3 \pi}{8}$. 
\end{lemma}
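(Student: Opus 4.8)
The plan is to exploit the singularly perturbed structure that emerges in \eqref{e: app rescaled 1} as $\delta_1\to 0$. After dividing out the common Euler factor $|\gamma|$, the frozen coefficient matrix (with $U_2,W_2,H_2,Z_2$ held constant and the higher-order remainders dropped) is
\[
M_{\delta_1}=\begin{pmatrix}
0 & \delta_1 & 0 & 0\\[2pt]
\tfrac{\delta_1 e^{i\theta}}{d_1} & 0 & -\tfrac{|\delta_1|^{1/2}U_2}{d_1} & 0\\[2pt]
0 & 0 & 0 & 1\\[2pt]
-\tfrac{e^{i\theta}|\delta_1|^{1/2}}{d_1} & 0 & 1+\tfrac{U_2}{d_1} & 0
\end{pmatrix},
\]
whose $(\check h,\check z)$-block is at $\delta_1=0$ equal to $\bigl(\begin{smallmatrix}0&1\\ 1+U_2/d_1&0\end{smallmatrix}\bigr)$ with real eigenvalues $\pm\sqrt{1+U_2/d_1}$, uniformly bounded away from the imaginary axis for $U_2\in[0,1]$ and fixed $d_1$, while the $(\tilde u,\hat w)$-block is $\mathrm{O}(\delta_1)$ and the coupling blocks are $\mathrm{O}(|\delta_1|^{1/2})$. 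Thus for $\delta_1$ small the spectrum of $M_{\delta_1}$ splits into a ``fast'' cluster near $\pm\sqrt{1+U_2/d_1}$ and a ``slow'' cluster near the origin, separated by a gap of order one.

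First I would block-diagonalize $M_{\delta_1}$ by a near-identity transformation — solving the associated Sylvester equation, whose unique solvability and uniform bounds ($\|P\|\lesssim|\delta_1|^{1/2}$) follow from the order-one spectral gap — obtaining an exact slow $2\times2$ block and fast $2\times2$ block. The fast block has eigenvalues $\pm\sqrt{1+U_2/d_1}+\mathrm{O}(\delta_1)$, and the eigenvalues $\nu$ of the slow block are exactly the square roots of the small root of the reduced determinant \eqref{e: app characteristic}; a short computation as in Lemma \ref{l: app rescaling 1} gives $\nu^2=\frac{\delta_1^2 e^{i\theta}}{d_1+U_2}+\mathrm{O}(\delta_1^3)$, hence $\nu=\pm\frac{\delta_1 e^{i\theta/2}}{\sqrt{d_1+U_2}}+\mathrm{O}(\delta_1^2)$. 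Under the hypothesis $|\mathrm{Arg}\,\gamma|\le\frac{3\pi}{8}$ one has $\theta=-2\,\mathrm{Arg}\,\gamma$ with $|\theta|\le\frac{3\pi}{4}$, so $\Re e^{i\theta/2}=\cos(\theta/2)\ge\cos\frac{3\pi}{8}>0$; consequently both the fast eigenvalues (off the imaginary axis by a distance $\gtrsim1$) and the slow eigenvalues (off the imaginary axis by a distance $\gtrsim\delta_1$) are hyperbolic, with one stable and one unstable direction in each block, uniformly in $\delta_1$ small.

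It then remains to upgrade this frozen-coefficient hyperbolicity to an exponential dichotomy on $\mathbb{R}$ for the genuine slowly varying, higher-order-perturbed system \eqref{e: app rescaled 1}, applying \cite[Lemma 2.3]{Sakamoto} to the transformed fast and slow blocks separately. The bookkeeping, which I expect to be the main obstacle, is that all errors must scale with the correct power of $\delta_1$: along solutions the coefficients depend on $\xi$ only through $U_2(\xi)=U(\delta_1|\gamma|^2\xi)$ and its companions, hence vary at rate $\mathrm{O}(\delta_1|\gamma|^2)$ (i.e.\ $\mathrm{O}(\delta_1|\gamma|)$ after removing the Euler factor), while the neglected remainders $\tilde\Gamma_{1/2}$ are $\mathrm{O}(\delta_1|\gamma|^2)$ in the $\hat w$-equation and $\mathrm{O}(|\delta_1|^{1/2}|\gamma|^2)$ in the $\check z$-equation; after block-diagonalization the fast-equation remainder feeds into the slow block only through a further factor $\|P\|\sim|\delta_1|^{1/2}$, so its effect there is again $\mathrm{O}(\delta_1|\gamma|)$. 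In each block the variation and perturbation rates are thus a factor $\mathrm{O}(|\gamma|)$ smaller than the relevant spectral gap ($\gtrsim\delta_1$ for the slow block, $\gtrsim1$ for the fast block). Hence for $\delta_1<\delta_1^\dag$ and $|\gamma|<\gamma_1^\dag$ both blocks — and therefore the full system \eqref{e: app rescaled 1} — admit an exponential dichotomy on $\mathbb{R}$, which precludes nonzero bounded solutions exactly as in the proof of Corollary \ref{c: delta 1 bounded}.

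The essential difficulty, and the reason this case cannot simply be folded into Corollary \ref{c: delta 1 bounded} by letting $\delta_1^*\to0$, is that the slow hyperbolicity gap degenerates linearly as $\delta_1\to0$; the argument works because this degeneracy is compensated by matching factors of $\delta_1$ in the slow variation of the coefficients and in the slow-block projection of the remainders, so that the decisive ratio is governed by $|\gamma|$ rather than by $\delta_1$. A secondary point, handled by the bound $|\theta|\le\frac{3\pi}{4}<\pi$, is to keep the reduced slow eigenvalue $\nu^2=\delta_1^2 e^{i\theta}/(d_1+U_2)$ away from the negative real axis, mirroring the computation in Lemma \ref{l: app rescaling 1}.
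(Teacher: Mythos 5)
Your proposal is correct and follows essentially the same route as the paper: reduce to the slow two-dimensional block (the paper does this by invoking Fenichel's slow-manifold reduction for the coupled system, you by explicitly block-diagonalizing the frozen-coefficient matrix, which for this linear skew-product system amounts to the same thing), arrive at the same reduced slow dynamics with eigenvalues $\pm\delta_1 e^{i\theta/2}/\sqrt{d_1+U_2}$ hyperbolic for $|\theta|\le 3\pi/4$, and conclude via the Sakamoto roughness lemma for slowly varying coefficients. Your version is more explicit than the paper's about why the $\mathrm{O}(\delta_1)$ degeneration of the slow spectral gap is harmless — namely that the coefficient variation and the projected remainders carry matching powers of $\delta_1$ so the decisive small parameter is $|\gamma|$ — a point the paper leaves implicit.
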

\begin{proof}
    After removing the Euler multiplier $|\gamma|$ and by coupling to the corresponding rescaled version of the existence problem, as in Section \ref{s: regularization}, we find that for $\delta_1, \gamma_1$ small all bounded solutions of \eqref{e: app rescaled 1} lie on a normally hyperbolic slow manifold, with leading order expansion
    \begin{align*}
        \check{h} = e^{i \theta} \frac{|\delta_1|^{1/2}}{d_1} \frac{\tilde{u}}{1 + \frac{U_2}{d_1}}, \quad \check{z} = \mathrm{O}(|\delta_1|^{1/2})
    \end{align*}
    Using \eqref{e: app rescaled 1}, we therefore find the reduced flow on the slow manifold is governed to leading order by by
    \begin{align}
        \tilde{u}_{\xi \xi} + \frac{1}{d_1} e^{i \theta} \left( \frac{U_2}{d_1 + U_2} - 1 \right) \tilde{u} = 0. 
    \end{align}
    For fixed $0 \leq U_2 \leq 1$ and $|\theta| \leq \frac{3 \pi}{4}$, the corresponding first-order system is hyperbolic. Since the coefficients are again slowly varying, we conclude the existence of exponential dichotomies, and hence non-existence of bounded solutions, again by \cite[Lemma 2.3]{Sakamoto}.  
\end{proof}

Combining Corollary \ref{c: delta 1 bounded} and \ref{l: delta 1 small}, we have excluded bounded solutions to \eqref{e: app finally rescaled} with $\delta = \delta_1 |\gamma|, |\gamma| \leq \max (\gamma_1^*, \gamma_1^\dag)$, and $|\mathrm{Arg} \gamma| \leq \frac{3 \pi}{8}$. It only remains to exclude the regime $|\gamma| = \gamma_1 \delta$, with $\gamma_1 > 0$ small. This argument is completely analogous to the proof of Lemma \ref{l: delta 1 small}, and so we conclude that there exists $r_1 > 0$ such that \eqref{e: app finally rescaled} admits no bounded solutions satisfying \eqref{e: app conditions} with $\phi_0 = \frac{3 \pi}{8}$. For $\delta, \gamma$ nonzero, the existence of bounded solutions to \eqref{e: app finally rescaled} is equivalent to existence of bounded solutions of the original eigenvalue problem \ref{e: app slow sys}, so we have proved Proposition \ref{p: large lambda stability}.


\bibliographystyle{abbrv}
\bibliography{KSbib}

\end{document}